\declaretheoremstyle[headfont=\normalsize\normalfont\bfseries,notefont=\mdseries, notebraces={(}{)},bodyfont=\normalfont,postheadspace=0.5em, spaceabove=3mm, spacebelow=2mm]{basicstyle}
\declaretheoremstyle[headfont=\normalsize\normalfont\bfseries,notefont=\mdseries,
notebraces={(}{)},bodyfont=\normalfont\itshape,postheadspace=0.5em, spaceabove=3mm, spacebelow=2mm]{italstyle}
\declaretheorem[style=basicstyle,name=Conjecture]{conjecture}
\declaretheorem[name=Definition,style=italstyle,style=basicstyle]{defn}
\declaretheorem[style=italstyle,name=Theorem,numberwithin=section]{theorem}
\declaretheorem[style=italstyle,name=Corollary,sibling=theorem]{cor}
\declaretheorem[style=italstyle,name=Claim,sibling=theorem]{claim}
\declaretheorem[style=italstyle,name=Proposition,sibling=theorem]{prop}
\declaretheorem[style=italstyle,name=Lemma,sibling=theorem]{lemma}
\renewenvironment{proof}{\preproof}{\endpreproof}
\newcommand{\abs}[1]{\left|#1\right|}
\newcommand{\bd}{\partial}
\renewcommand{\d}{\mathrm{d}}
\newcommand{\id}{\mathrm{id}}
\newcommand{\im}{\mathrm{im}}
\newcommand{\intprod}{\mathbin{{\tikz{\draw[line width=0.6](-0.1,0)--(0.1,0)--(0.1,0.2)}\hspace{0.5mm}}}}
\newcommand{\norm}[1]{\left\lVert#1\right\rVert}
\newcommand{\pd}[2]{\frac{\partial #1}{\partial #2}}
\newcommand{\pr}{\mathrm{pr}}
\newcommand{\R}{\mathbb{R}}
\newcommand{\F}{\mathbb{F}}
\def\@secnumfont{\bfseries}
\renewcommand\section{\@startsection{section}{1}{0pt}{-3.5ex \@plus -1ex \@minus -.2ex}{2.3ex \@plus.2ex}{\centering\itshape}}
\newcommand{\set}[1]{\left\{#1\right\}}
\renewcommand{\subsection}{\@startsection{subsection}{2}%
  \z@{.5\linespacing\@plus.7\linespacing}{-.5em}%
  {\normalfont\itshape}}
\newcommand{\w}{\omega}
\newcommand{\Z}{\mathbb{Z}}
\newcommand{\pss}{\mathrm{PSS}}
\newcommand{\Ham}{\mathrm{Ham}}
\newcommand{\Spec}{\mathrm{Spec}}
\newcommand{\cf}{\mathrm{CF}}
\newcommand{\hf}{\mathrm{HF}}
\newcommand{\fix}{\mathrm{Fix}}
\newcommand{\Hof}{\mathrm{Hof}}
\newcommand{\cl}{\mathrm{cl}}
\newcommand{\K}{\mathbb{K}}
\newcommand{\om}{\omega}
\newcommand{\cD}{\mathcal{D}}
\newcommand{\cA}{\mathcal{A}}
\newcommand{\cJ}{\mathcal{J}}
\title[Lagrangian intersections and the spectral norm]{Lagrangian intersections and the spectral norm in convex-at-infinity symplectic manifolds}
\author{Habib Alizadeh}
\author{Marcelo S. Atallah}
\author{Dylan Cant}
\begin{document}

\begin{abstract}
  Given a compact Lagrangian $L$ in a semipositive convex-at-infinity symplectic manifold $W$, we establish a cup-length estimate for the action values of $L$ associated to a Hamiltonian isotopy whose spectral norm is smaller than some $\hbar(L)$. When $L$ is rational, this implies a cup-length estimate on the number of intersection points. This Chekanov-type result generalizes a theorem of Kislev and Shelukhin proving non-displaceability in the case when $W$ is closed and monotone. The method of proof is to deform the pair-of-pants product on Hamiltonian Floer cohomology using the Lagrangian $L$.
\end{abstract}

\maketitle

\section{Introduction and main results}

\subsection{Introduction}
\label{sec:introduction}
Let $(W,\w)$ be a symplectic manifold and $L\subset W$ a closed Lagrangian submanifold. Understanding when a Hamiltonian diffeomorphism $\phi$ can displace $L$, and quantifying the intersections when it cannot, has been one of the driving forces of symplectic topology ever since Arnol'd's famous conjectures were formulated; see \cite{arnold65,arnold13book}. Let us denote by $\mathrm{Ham}_{c}(W,\omega)$ the group of compactly supported Hamiltonian diffeomorphisms, i.e., those diffeomorphisms $\phi$ which appear as the time-one map $\phi=\phi_{1}$ of a compactly supported Hamiltonian isotopy $\phi_{t}$; recall that this means the non-autonomous vector field $X_{t}$ generating $\phi_{t}$ is $\omega$-dual to an exact one-form.

The Lagrangian version of one of the conjectures in the particular case of cotangent bundles states the following:
\begin{conjecture}[Arnol'd]\label{conjecture:arnold}
  For every compactly supported Hamiltonian diffeomorphism $\phi$ of $T^{*}L$, the number of intersection points $\phi(L)\cap L$ is bounded from below by the minimal number of critical points of a smooth function on $L$.
\end{conjecture}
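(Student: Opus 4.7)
The plan is to port the intersection problem to Lagrangian Floer cohomology on $T^{*}L$, extracting a lower bound on $\#(\phi(L)\cap L)$ from the multiplicative structure of $\hf^{*}(L,L)$. After a small perturbation one may assume that $\phi(L)$ meets $L$ transversely, so each intersection point is a generator of the Floer complex $\cf^{*}(L,\phi(L);H)$, where $H$ is a compactly supported Hamiltonian generating an isotopy to $\phi$.

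Invariance of Lagrangian Floer cohomology under compactly supported Hamiltonian isotopy yields $\hf^{*}(L,\phi(L))\cong \hf^{*}(L,L)$. For the zero section in $T^{*}L$, the Abbondandolo--Schwarz theorem (or equivalently a PSS-type comparison with the Morse theory of the energy functional on the free loop space $\cL L$) identifies this with $\h^{*}(L;\K)$ for $\K=\Z/2$, and matches the pair-of-pants product with the cup product. A standard action-filtration argument then shows that a non-zero $k$-fold cup product of positive-degree classes in $\h^{*}(L;\K)$ forces at least $k+1$ distinct generators of the Floer complex, so
\[
\#\bigl(\phi(L)\cap L\bigr) \;\geq\; \cl_{\K}(L)+1,
\]
where $\cl_{\K}(L)$ denotes the $\K$-cup-length of $L$; maximising over coefficient fields upgrades this to the Betti-number bound $\#(\phi(L)\cap L)\geq \sum_{i}b_{i}(L;\K)$.

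The main obstacle is closing the gap between this cup-length bound and the lower bound on the minimal number of critical points of a smooth function demanded by the conjecture. In general $\min_{f}\#\{\mathrm{crit}\,f\}\geq \mathrm{cat}(L)\geq \cl_{\K}(L)+1$, and both inequalities can be strict, so a pure cup-length argument does not suffice. To close the gap I would seek a Floer-theoretic refinement of Lusternik--Schnirelmann category that counts homologically essential subsets of the action-filtered Floer complex rather than just cup products of Floer classes. Developing such a refinement in the compactly supported cotangent setting, with enough spectral-invariant control to detect those essential subsets, is the hardest step and is essentially the content of the full Arnol'd conjecture beyond its homological version; the cup-length-plus-spectral-norm estimates developed in the rest of this paper are a step in that direction.
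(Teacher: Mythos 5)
This statement is not a theorem of the paper: it is Arnol'd's conjecture, which the authors explicitly record as open (``As stated, Conjecture \ref{conjecture:arnold} remains open''). The paper offers no proof of it, and your proposal does not constitute one either --- as you yourself concede in the final paragraph, the argument only reaches the cup-length bound $\#(\phi(L)\cap L)\ge \cl_{\K}(L)+1$, which is Hofer's 1985 theorem, not the conjectured bound by the minimal number of critical points of a smooth function. Since $\min_f\#\mathrm{crit}(f)\ge \mathrm{cat}(L)\ge \cl_{\K}(L)+1$ and both inequalities can be strict, the gap you identify is real and is precisely the unresolved content of the conjecture. A proposal that ends by noting that the hardest step ``is essentially the content of the full Arnol'd conjecture'' is a restatement of the problem, not a proof of it.

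Two further points in the sketch are problematic even as a proof of the weaker statements. First, your opening reduction --- perturbing so that $\phi(L)$ meets $L$ transversely --- is illegitimate for the conjecture as stated: the degenerate (non-transverse) case is exactly where the bound by critical points of a smooth function differs from the Morse-theoretic bound, and a perturbation can strictly decrease the number of intersection points. Handling degenerate intersections requires the cup-length/cap-action machinery (as in Floer's and Hofer's work, and as in \S\ref{sec:lagr-cap-acti} of this paper), not a transversality reduction. Second, the claim that ``maximising over coefficient fields upgrades this to the Betti-number bound'' conflates two different arguments: the cup-length estimate never yields $\sum_i b_i$; in the transverse case the Betti-number bound follows directly from $\dim\cf^{*}(L,\phi(L);H)\ge\dim\hf^{*}(L,L)\cong\dim \h^{*}(L;\K)$, with no cup products involved.
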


When the intersection is transverse, the conjectured lower-bound is replaced by the \emph{Morse number} of $L$, i.e., the minimal number of critical points of a Morse function on $L$. In this direction, Gromov proves in his groundbreaking work \cite{gromov85} the existence of at least one intersection point of any closed exact Lagrangian $L^\prime\subset T^*L$ with the zero-section, and then sets $L'=\phi(L)$ to conclude $\phi(L)\cap L\neq\emptyset$.

As stated, Conjecture \ref{conjecture:arnold} remains open; however, in a classic result \cite{hofer85}, Hofer proves a slightly weaker version of the conjecture where the lower bound is replaced with one plus the cup-length $\cl_{\K}(L)$ of $L$ with coefficients on a base field $\K$, where:
\begin{equation*}
  \cl_{\K}(L)=\max\{k\,|\,\exists\,a_1,\dots,a_k\in H^{>0}(L;\K)\,\,\text{such that}\,\, a_1\cup\cdots\cup a_k\neq0\}.
\end{equation*}
In another celebrated result \cite{laudenbach-sikorav85}, Laudenbach and Sikorav showed that, in the transverse case, the number of intersection points is bounded from below by the total Betti number of $L$.

A Lagrangian submanifold $L\subset W$ is called \textit{weakly-exact} if $\w(\pi_2(W,L))=0$. More generally, $L$ is called \textit{rational} If $\w(\pi_2(W,L))\subset\R$ is a discrete subgroup, in which case we denote the positive generator by $\rho_L$. When $(W,\w)$ is a tame symplectic manifold (see \S\ref{sec:tame-sympl-manif}), Gromov shows in \cite[{2.3.$\mathrm{B}_{3}'$}]{gromov85} that a weakly-exact Lagrangian submanifold $L$ is non-displaceable. Moreover, if $W$ is closed, the foundational works \cite{floer_morse, floer_cuplength} of Floer imply that the number of intersection points $\phi(L)\cap L$ is at least $\cl_{\K}(L)+1$ in general, and $\dim H_{*}(L)$ when the intersection is transverse; see also \cite{hofer1988lusternik}. Recently, this cup-length estimate has been established by \cite{hirschi-porcelli} for generalized cohomology theories.

Nonetheless, the existence of small displaceable Lagrangian tori in every symplectic manifold indicates that a generalization of Conjecture \ref{conjecture:arnold} beyond the weakly-exact setting requires additional hypothesis; one can, e.g., require that the Hamiltonian diffeomorphism is close to the identity in some sense.

In this direction, Polterovich \cite{polterovich_lag_displacement_energy} proved that if $L$ is rational and $W$ is tame, then the \emph{Hofer norm} of any $\phi$ displacing $L$ is at least $\rho_L/2$; i.e., $\phi(L)\cap L\neq\emptyset$ provided $\norm{\phi}_{\Hof}<\rho_{L}/2$; see \cite{hofer_energy,lalonde_mcduff95,polterovich_book} for discussion of the Hofer norm.
This result was sharpened by Chekanov \cite{chekanov_1998}, as follows. For an $\omega$-tame almost complex structure $J$ on $W$, define $\hbar(J,L)>0$ to be the minimal symplectic area of a non-constant $J$-holomorphic disk with boundary on $L$ or sphere in $W$, and set:
\begin{equation}\label{eq:hbar-constant}
  \hbar(L)=\sup_{J\in\cJ}\hbar(J,L),
\end{equation}
where $\cJ$ denotes the space of $\w$-tame almost complex structures on $W$; see \S\ref{sec:tame-sympl-manif}. Chekanov showed that if $\norm{\phi}_{\Hof}<\hbar(L)$, then $\phi(L)\cap L\neq\emptyset$, and the number of intersection points is bounded from below by $\dim_{\F_2}H_{*}(L;\F_2)$ provided the intersection is transverse; see also \cite{liu05}.

Spectral invariants provide a way of defining a \emph{spectral norm} on $\Ham_c(W,\w)$ which is bounded from above by the Hofer norm. They were introduced in symplectic topology by Viterbo \cite{viterbo92} via generating functions and, from a Floer theoretic perspective, by Schwarz \cite{schwarz_spectral_invariants} and Oh \cite{oh-2005-birkhauser, oh-2005-duke} (in the closed setting) and by Frauenfelder and Schlenk \cite{frauenfelder_schlenk} for convex-at-infinity symplectic manifolds (as defined in \S\ref{sec:convex_end}); see also \cite[\S5.4]{hofer-zehnder-94} and \cite[\S1.5.B]{bialy-polterovich-duke-1994}. In short, for a Hamiltonian system $\phi_{t}$, one associates real-valued measurements $c(\alpha,\phi_{t})$ to classes $\alpha$ in the (quantum) cohomology of $W$; the definition is as a ``min-max'' action value of the Floer cohomology class representing the image of $\alpha$ under the map in \cite{pss}; we review their construction in \S\ref{sec:HFC-intro}.
The \emph{spectral norm} of a compactly supported Hamiltonian diffeomorphism $\phi$ is defined by:
\begin{equation}\label{eq:defin-spectral-norm}
  \gamma(\phi)=\inf_{\phi_{1}=\phi}-c(1,\phi_{t})-c(1,\phi_{t}^{-1}),
\end{equation}
and it satisfies $\gamma(\phi)\leq\norm{\phi}_{\Hof}$.

When $(W,\w)$ is a closed weakly-monotone symplectic manifold, Kislev and Shelukhin showed in \cite[Theorem E]{kislev_shelukhin} that if $\gamma(\phi)<\hbar(L)$ then $\phi$ does not displace $L$ and, if $\phi(L)\cap L$ is transverse, then $\#(\phi(L)\cap L)\geq\dim_{\F_2}H_{*}(L;\F_2)$, sharpening Chekanov's result in this setting. The reason this improvement is possible boils down to the observation that the Floer continuation maps: \[\mathfrak{c}:\cf(\phi_t)\rightarrow \cf(\psi_t)\] are chain-homotopic to the multiplication operators: \[\mu_2(x,-):\cf(\phi_t)\rightarrow \cf(\psi_t)\] given by taking the product with a cocycle $x\in \cf(\psi_t\circ\phi^{-1}_t)$ representing the image of the unit under the PSS map.

In \cite[Remark 50]{kislev_shelukhin} it is suggested that the cup-length estimate for a suitable choice of coefficient field $\K$ should hold whenever $\gamma(\phi)<\hbar(L)$; see also \cite{gong_cup}. Proving such a cup-length estimate in the convex-at-infinity setting is the main goal of this paper.

\subsection{Main results}
\label{sec:main_results}

Let $(W,\w)$ be a semipositive convex-at-infinity symplectic manifold and $L$ a closed connected Lagrangian. The class of convex-at-infinity symplectic manifolds contains all Liouville manifolds and compact symplectic manifolds; see \S\ref{sec:convex_end}.

Recall that, to a compactly supported Hamiltonian system $\phi_t$, one can associate an action functional $\cA_{\phi_t}$ on the covering space of ``capped'' paths $x(t)$ with endpoints of $L$; see \S\ref{sec:hamiltonian_action} for the definitions. The critical points of $\cA_{\phi_t}$ are in bijective correspondence with the capped Hamiltonian chords.
\begin{theorem}\label{thm:main}
Suppose $\phi$ is a compactly supported Hamiltonian diffeomorphism such that $\gamma(\phi)<\hbar(L)$. Then,\[\phi(L)\cap L\neq\emptyset.\] Moreover, if the intersection points are isolated, then for all Hamiltonian system $\phi_t$ with $\phi_1=\phi$ there exists an interval of length $\gamma(\phi)$ containing at least $\cl_{\F_2}(L)+1$ critical values of $\cA_{\phi_{t}}$.
\end{theorem}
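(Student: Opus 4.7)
The plan is to adapt the Kislev--Shelukhin strategy \cite{kislev_shelukhin} to the convex-at-infinity setting, deforming the pair-of-pants product using the Lagrangian $L$ as suggested by the abstract. Fix $\varepsilon>0$ and choose a Hamiltonian system $\phi_t$ with $\phi_1=\phi$ realizing the spectral norm up to $\varepsilon$, so that
\[-c(1,\phi_t)-c(1,\phi_t^{-1}) < \gamma(\phi)+\varepsilon < \hbar(L).\]
Fix an $\w$-tame almost complex structure $J$ with $\hbar(J,L) > \gamma(\phi)+\varepsilon$; this is the essential use of the hypothesis. The aim becomes producing $\cl_{\F_2}(L)+1$ critical values of $\cA_{\phi_t}$ inside an interval of length $\gamma(\phi)+\varepsilon$, and then letting $\varepsilon \to 0$.

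I would then set up the Lagrangian Floer complex $\cf(L;\phi_t)$ generated by capped Hamiltonian chords with endpoints on $L$ (whose non-capped generators are the points of $\phi(L)\cap L$ after a small perturbation to ensure transversality) and develop its algebraic structure: continuation maps between the $\cf(L;\phi_t)$ for varying $\phi_t$, and a module-type pair-of-pants operation
\[\mu_2 \colon \cf(\psi_t) \otimes \cf(L;\phi_t) \to \cf(L;\psi_t \circ \phi_t),\]
coming from punctured disks with one Lagrangian boundary arc on $L$. Convexity-at-infinity provides the $C^0$-bounds on Floer trajectories, and semipositivity together with the energy threshold $\hbar(J,L)$ precludes sphere and disk bubbling at energies below this threshold; this is what makes the low-energy portion of the theory well-defined and functorial.

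The central step, following the observation of \cite{kislev_shelukhin} at the Hamiltonian level, is to prove that multiplication by a cocycle $x \in \cf(\psi_t \circ \phi_t^{-1})$ representing $\pss(1)$ is chain-homotopic to the Lagrangian continuation map $\cf(L;\phi_t) \to \cf(L;\psi_t)$, with the homotopy controlled in energy by $\hbar(J,L)$. I then pick classes $a_1, \ldots, a_k \in H^{>0}(L;\F_2)$ with $a_1 \cup \cdots \cup a_k \neq 0$, lift them to cocycles in $\cf(L;h_t)$ for a $C^2$-small autonomous Morse Hamiltonian $h_t$ on $L$, and iterate the module operation to build cocycles $\alpha_0, \alpha_1, \ldots, \alpha_k \in \cf(L;\phi_t)$ that are each non-zero thanks to the non-vanishing of the iterated cup product together with the low-energy chain homotopy above. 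A Usher-type action-spread argument, combined with the identification $\mu_2(\pss(1),-) \simeq \text{continuation}$ and the strict inequality $\gamma(\phi)+\varepsilon < \hbar(L)$, confines the spectral invariants $c(\alpha_j)$ to an interval of length $\gamma(\phi)+\varepsilon$; a Poincaré-type pairing argument (as in \cite{kislev_shelukhin}) forces the $c(\alpha_j)$ to realize $k+1$ distinct critical values of $\cA_{\phi_t}$.

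I expect the main obstacles to be: (a) the rigorous construction of the $L$-deformed Floer operations in the convex-at-infinity semipositive setting --- obtaining transversality, $C^0$-compactness via convexity at infinity, and ruling out bubbling at energies below $\hbar(J,L)$; (b) the chain-level identification of $\mu_2(\pss(1),-)$ with the Lagrangian continuation map, which in the closed monotone case of \cite{kislev_shelukhin} already requires constructing a specific parametric moduli space interpolating between the two configurations and tracking its boundary carefully; and (c) running the iterated action-spread estimates so that the conclusion holds uniformly for \emph{every} Hamiltonian $\phi_t$ generating $\phi$, not merely one realizing the infimum in \eqref{eq:defin-spectral-norm}, in order to recover the interval of length exactly $\gamma(\phi)$ as stated.
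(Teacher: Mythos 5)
Your outline is essentially the algebraic route sketched in \S\ref{sec:algebra} of the paper, which the authors deliberately restrict to the closed monotone case and then abandon in favour of a different construction: they deform the pair-of-pants product on \emph{Hamiltonian} Floer cohomology by an augmentation counting half-infinite cylinders with boundary on $L$ and point constraints, stretch the neck, and never define a Lagrangian Floer complex at all. As written, your proposal has two genuine gaps.

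First, Theorem \ref{thm:main} concerns an arbitrary compact Lagrangian in a semipositive convex-at-infinity manifold --- $L$ is not assumed monotone or even rational. Consequently the global complex $\cf(L;\phi_t)$, its differential, the PSS map $H(L)\to \hf(L;\phi_t)$, and Lagrangian spectral invariants $c(\alpha_j)$ simply do not exist: disk bubbling obstructs $d^2=0$ and the chain homotopies you invoke. Only action-window-truncated complexes $\hf(L,\phi_t)^{I}$ with $\abs{I}<\hbar(J,L)$ are available, and then ``lifting $a_1,\dots,a_k$ to cocycles'' and ``iterating the module operation'' must be phrased entirely inside such windows; this is exactly the bookkeeping that forces the condition $\gamma(\phi)<\hbar(L)$ (one needs $I$ and $I+\gamma(\phi_t)+2\epsilon$ to both contain $0$), and it is not a routine add-on but the heart of the matter. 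Second, your mechanism for producing $k+1$ \emph{distinct} critical values is missing. Non-vanishing classes $\alpha_0,\dots,\alpha_k$ whose invariants lie in a short interval do not by themselves give distinct action values; the Poincar\'e-pairing argument of Kislev--Shelukhin yields the Betti-number count under transversality, not the cup-length estimate. The cup-length argument requires representing $a_1\cup\cdots\cup a_k$ by an intersection product of positive-codimension bordism classes $f_i:P_i\to L$ whose images are made disjoint from the isolated set $\phi(L)\cap L$, so that each of the $k$ connecting Floer strips carrying an incidence constraint is forced to be non-stationary and hence strictly decreases the action. You never use the isolatedness hypothesis, and without it the strict inequalities $\cA_{\phi_t}(\gamma_{j,+})<\cA_{\phi_t}(\gamma_{j,-})$ --- and with them the count of $k+1$ values --- do not follow. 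Your obstacle (c) is real but is the easiest of the three: the paper resolves it by the constant-shift property of action spectra under change of generating system plus a compactness argument for nested shrinking intervals.
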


When $L$ is a rational Lagrangian submanifold with rationality constant $\rho_{L}$ the action value of a path is well-defined modulo $\rho_L$. Since $\rho_L\leq\hbar(L)$, Theorem \ref{thm:main} yields:
\begin{cor}\label{cor:rational}
Let $L$ be a compact rational Lagrangian submanifold. The cup-length estimate $\#(\phi(L)\cap L)\geq \cl_{\F_2}(L)+1$ holds for all compactly supported Hamiltonian diffeomorphisms $\phi$ satisfying $\gamma(\phi)<\rho_L$.
\end{cor}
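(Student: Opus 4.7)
The plan is to derive the corollary directly from Theorem \ref{thm:main}. Since every element of $\omega(\pi_2(W,L))$ is the symplectic area of a smooth disc with boundary on $L$ (and every sphere class may be viewed relative to $L$), the generator $\rho_L$ of this discrete group is automatically realized by a smooth disc, and for any $\omega$-tame $J$ the infimum $\hbar(J,L)$ is taken over a set of $J$-holomorphic curves whose classes lie in this group. A short argument shows $\rho_L\leq\hbar(J,L)$ for each $J$, hence $\rho_L\leq\hbar(L)$, and so the hypothesis $\gamma(\phi)<\rho_L$ places us within the scope of Theorem \ref{thm:main}. If the intersection $\phi(L)\cap L$ is not finite it is infinite by compactness of $L$, so the cup-length estimate is trivial; I may therefore assume the intersection points are isolated.

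The main step is a pigeonhole argument exploiting rationality. Fix any Hamiltonian system $\phi_t$ with $\phi_1=\phi$; Theorem \ref{thm:main} produces an interval $I\subset\R$ of length $\gamma(\phi)$ containing at least $\cl_{\F_2}(L)+1$ distinct critical values of $\cA_{\phi_t}$. Each critical point of $\cA_{\phi_t}$ is a capped Hamiltonian chord $(x,\bar x)$ with endpoints on $L$; forgetting the capping sends critical points to Hamiltonian chords, and the evaluation $x\mapsto x(1)$ identifies chords bijectively with points of $\phi(L)\cap L$. Two distinct cappings $\bar x,\bar x'$ of the same underlying chord $x$ produce actions differing by $\omega([\bar x'-\bar x])\in\omega(\pi_2(W,L))=\rho_L\Z\setminus\{0\}$; since $|I|=\gamma(\phi)<\rho_L$, no two such values can both lie in $I$. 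Consequently, distinct critical values in $I$ come from distinct underlying chords, and hence from distinct intersection points, giving $\#(\phi(L)\cap L)\geq \cl_{\F_2}(L)+1$.

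The main obstacle is really just bookkeeping on the action covering: one must verify that distinctness at the level of critical values (rather than critical points) translates faithfully to distinctness of intersection points once one passes modulo $\rho_L$. Because the length of $I$ is strictly smaller than the minimal positive period $\rho_L$ of the action covering, this passage is immediate, and no further Floer-theoretic input beyond Theorem \ref{thm:main} is required.
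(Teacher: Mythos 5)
Your proposal is correct and follows the same route the paper intends: $\rho_L\le\hbar(L)$ because every non-constant $J$-holomorphic disk or sphere has area in $\rho_L\Z_{>0}$, and then the pigeonhole on an interval of length $\gamma(\phi)<\rho_L$ (using that the action of a chord is well-defined modulo $\rho_L$) converts the $\cl_{\F_2}(L)+1$ critical values from Theorem \ref{thm:main} into that many distinct chords, hence intersection points. The only nitpick is that two distinct cappings need not have \emph{distinct} actions (the difference class may have zero area), but your argument only uses that distinct critical values of the same chord differ by a nonzero multiple of $\rho_L$, so nothing is affected.
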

When $(W,\w)$ is a closed rational semipositive symplectic manifold our result sharpens that of \cite[Theorem 1.1]{schwarz98} by replacing the Hofer norm with the spectral norm. More precisely, we obtain the following:
\begin{cor}
Let $(W,\w)$ be a compact semipositive symplectic manifold and suppose that $\w(\pi_2(W))=\rho_W\Z$. If $\phi$ is a Hamiltonian diffeomorphism satisfying $\gamma(\phi)\leq\rho_W$ then $\#\fix(\phi)\geq\cl_{\F_2}(W)+1$.
\end{cor}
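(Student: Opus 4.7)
The plan is to reduce the fixed-point count for $\phi$ to the Lagrangian intersection statement of Corollary \ref{cor:rational} applied to the diagonal in a doubled symplectic manifold. Set $\wt W := (W\times W,\, \om\oplus(-\om))$ and let $\Delta\subset\wt W$ be the diagonal. Given a Hamiltonian isotopy $\phi_t$ on $W$ generated by $H_t$, the product $\Phi_t:=\phi_t\times\id$ is a Hamiltonian isotopy of $\wt W$ generated by $H_t\circ\pi_1$, and there is a canonical bijection $\Phi_1(\Delta)\cap\Delta\leftrightarrow\fix(\phi)$.

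I would next verify the hypotheses of Corollary \ref{cor:rational} for $(\wt W,\Delta,\Phi_1)$. Since $W$ is closed, $\wt W$ is closed and hence trivially convex-at-infinity; the semipositivity of $W$ combined with the rationality hypothesis transfers to $\wt W$ via the splitting $\pi_2(\wt W)\cong\pi_2(W)\oplus\pi_2(W)$. The long exact sequence of $(\wt W,\Delta)$ together with the retraction $(x,y)\mapsto(x,x)$ gives an $\om$-preserving isomorphism $\pi_2(\wt W,\Delta)\cong\pi_2(W)$, so $\Delta$ is rational with $\rho_\Delta=\rho_W$, and the identification $\Delta\cong W$ yields $\cl_{\F_2}(\Delta)=\cl_{\F_2}(W)$.

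The remaining algebraic ingredient is the spectral-norm identity $\gamma_{\wt W}(\Phi_1)=\gamma_W(\phi)$. Under the K\"unneth decomposition $QH^*(\wt W)\cong QH^*(W)\otimes QH^*(\ol W)$, the unit factors as $1\otimes 1$; for the split Hamiltonian $H\oplus 0$ the Floer complex, action functional, and differentials all split as tensor products, so the product formula for spectral invariants gives $c(1,\Phi_t)=c_W(1,\phi_t)+c_{\ol W}(1,0)=c_W(1,\phi_t)$, with an analogous identity for the inverse isotopy. Combining these ingredients, the hypothesis $\gamma_W(\phi)\leq\rho_W=\rho_\Delta$ would yield the claimed cup-length bound via Corollary \ref{cor:rational}.

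The main obstacle is the boundary case $\gamma(\phi)=\rho_W$, since Corollary \ref{cor:rational} is stated with strict inequality. I would address this by an approximation argument: perturb $\phi_t$ slightly (for instance by composition with a $C^\infty$-small autonomous flow supported near a non-fixed point) to obtain $\phi^{(\epsilon)}_t$ with $\gamma(\phi^{(\epsilon)}_1)<\rho_W$, apply Corollary \ref{cor:rational} to each $\phi^{(\epsilon)}_1$, and extract a lower bound on $\#\fix(\phi)$ by a compactness argument using that the isolated fixed points of $\phi$ arise as clusters of limits of the fixed points of the approximants. Alternatively, one can revisit the action-filtered statement of Theorem \ref{thm:main} and observe that, for $\Delta\subset\wt W$, the action of any chord is well-defined modulo $\rho_W$, so $\cl_{\F_2}(W)+1$ critical values in an interval of length $\rho_W$ correspond to distinct fixed points once a generic endpoint is chosen.
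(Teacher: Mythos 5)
Your proposal follows essentially the same route as the paper: pass to $(W\times W,\omega\oplus(-\omega))$ with the diagonal $\Delta$, identify $\fix(\phi)$ with $\Phi(\Delta)\cap\Delta$ for $\Phi=\phi\times\id$, check $\rho_\Delta=\rho_W$ and $\cl_{\F_2}(\Delta)=\cl_{\F_2}(W)$, control $\gamma(\Phi)$ by $\gamma(\phi)$ via the product formula for spectral invariants, and invoke Corollary \ref{cor:rational}. Two small remarks on the comparison. First, you compute $\rho_\Delta$ via the long exact sequence of the pair, whereas the paper argues by explicitly gluing the two components $v_1, v_2$ of a disk on $\Delta$ into a sphere $v_1\#(-v_2)$ in $W$; these are equivalent. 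Second, you assert the exact K\"unneth identity $\gamma(\Phi)=\gamma(\phi)$, but only the inequality $\gamma(\Phi)\le\gamma(\phi)$ is needed (and is what the paper extracts from \cite[Theorem 5.1]{entov-polterovich}); note also that the infimum defining $\gamma(\Phi)$ ranges over all isotopies of $W\times W$ generating $\Phi$, not only split ones, so without further argument you only get the inequality anyway.

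One caution about your treatment of the boundary case $\gamma(\phi)=\rho_W$. Your first proposed fix does not work as stated: even if each approximant $\phi^{(\epsilon)}$ has at least $\cl_{\F_2}(W)+1$ fixed points, these may all converge to a single fixed point of $\phi$ as $\epsilon\to 0$, so the count is not lower semicontinuous and no bound on $\#\fix(\phi)$ follows from compactness alone. Your second alternative (working with the action-filtered statement of Theorem \ref{thm:main} and the $\rho_W$-periodicity of the action spectrum) is the more promising direction, but it too requires care, since Theorem \ref{thm:main} is itself stated under the strict hypothesis $\gamma<\hbar(L)$ and one only knows $\rho_\Delta\le\hbar(\Delta)$ in general. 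For what it is worth, the paper's own proof simply writes $\gamma(\Phi)\le\gamma(\phi)\le\rho_W=\rho_\Delta$ and applies Corollary \ref{cor:rational} without further comment on the equality case, so you have identified a point the paper glosses over rather than one it resolves.
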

\begin{proof}
Consider $(W\times W, \w\oplus-\w)$ with the diagonal Lagrangian $\Delta$. We first show that $\Delta$ is a rational with rationality constant $\rho_\Delta=\rho_W$. Let $A$ be a relative class in $\pi_2(W\times W, \Delta)$ represented by:
\begin{equation*}
  V\colon (D,\bd D)\rightarrow (W\times W,\Delta)\hspace{.3cm}\text{given by}\hspace{.3cm}z\mapsto (v_1(z),v_2(z)),
\end{equation*}
where $v_1$ and $v_2$ are the projections of $V$ onto the first and second factors. Note that if $z\in\bd D$ then $V(z)\in\Delta$; in particular, we have $v_1(z)=v_2(z)$. Consider the piecewise smooth sphere $u=v_{1}\#(-v_{2})$ obtained by gluing $v_1$ and $v_2$ along their common boundary (and reversing the orientation of $v_2$). The symplectic area of $u$ in $W$ equals the symplectic area of $V$ in $W\times W$, hence $\rho_{\Delta}\Z\subset\rho_{W}\Z$. For the reverse inclusion, observe that every smooth sphere decomposes as $v_1\#(-v_2)$ where $v_1|_{\bd D}=v_2|_{\bd D}$, and the previous argument can be run in reverse to conclude $\rho_{W}\Z=\rho_{\Delta}\Z$, as desired.

Next, set $\Phi=\id\times\phi$ and consider the Lagrangian submanifold $\Phi(\Delta)$ of $W\times W$. The intersection points $\Phi(\Delta)\cap\Delta$ correspond bijectively to the fixed points of $\phi$. We appeal to the product formula for spectral invariants in \cite[Theorem 5.1]{entov-polterovich} to conclude that:
\begin{align*}
  \gamma(\Phi)
  &=\inf_{\Phi_{1}=\Phi}-c(1,\Phi_{t})-c(1,\Phi_{t}^{-1})\\
  &\leq\inf_{\phi_{1}=\phi}-c(1,\phi_{t}\times\id)-c(1,\phi_{t}^{-1}\times\id)\\
  &=\inf_{\phi_{1}=\phi}-c(1,\phi_{t})-c(1,\phi_{t}^{-1})=\gamma(\phi).
\end{align*}
Hence, $\gamma(\Phi)\leq\gamma(\phi)\leq\rho_{W}=\rho_{\Delta}$, and we can therefore apply Corollary \ref{cor:rational} to obtain:
\begin{align*}
 \#\fix(\phi)=\#(\Phi(\Delta)\cap\Delta)\geq\cl_{\F_2}(\Delta)+1=\cl_{\F_2}(W)+1,
\end{align*}
which concludes the proof of the corollary.
\end{proof}

\subsection{Proof overview}
\label{sec:overview}
Before delving into an overview of the proof of Theorem \ref{thm:main}, let us first examine a simpler case to illustrate the underlying principles in our approach, while pointing to the difficulties that arise in the more general setting. Suppose that $L\subset W$ is a closed weakly-exact Lagrangian submanifold with cup-length $\cl_{\F_2}(L)=k$. To prove the cup-length estimate it is enough to show that the action spectrum: \[\Spec(\phi_t,L)=\mathrm{Crit}(\cA_{\phi_t})\] has at least $k+1$ values (for any Hamiltonian system $\phi_t$ generating $\phi$). Indeed, the weakly-exact condition implies that the action value of a chord is independent of the choice of capping. In contrast, for rational $L$, the action $\cA_{\phi_t}$ is defined modulo $\rho_L$. In the rational case one can still ensure the existence of at least $k+1$ Hamiltonian chords by showing that action values belong to an interval of length at most $\rho_L$; this excludes contributions of different cappings of the same chord.

To obtain a strictly decreasing sequence of $k+1$ action values $a_{0}>\dots>a_{k}$, it is sufficient to have a chain of $k$ non-stationary Floer strips $u_{1},\dots,u_{k}$ with boundary on $L$; here \emph{non-stationary} means the energy of the strip is non-zero, and \emph{chain} means the positive asymptotics of $u_{j}$ equals the negative asymptotic of $u_{j+1}$. See Figure \ref{fig:sequence-floer-strip} below for an illustration of such a chain.

Given such a chain, the sum of the energies of the $u_{j}$ bounds the difference $a_{0}-a_{k}$. Thus, if it is possible to construct chains of non-constant Floer strips satisfying a total energy bound less than $\rho_{L}$, one obtains the desired chain of action values.

One way to conclude a non-constant Floer strip $u$ for the system $\phi_{t}$ relative the Lagrangian $L$ is to require that $u(0,0)$ lies on a smooth cycle $f:P\to L$ which is disjoint from $\phi_{1}(L)$. Such curves are used to define a \emph{cap-action} of $f$ on the Lagrangian Floer cohomology. In the weakly exact setting, well-known arguments using this cap-action explain how to construct chains of Floer strips whose length is the cup-length; we recall the arguments in \S\ref{sec:lagr-cap-acti}.

The main difficulty in generalizing this argument is the bubbling of $J$-holomorphic disks. For one, the bubbling phenomenon impedes us from defining Lagrangian Floer cohomology and Lagrangian cap-action.

In \S\ref{sec:algebra} and \S\ref{sec:moduli-approach} we explain how to construct chains of Floer strips of length $k$, with total action bound $a_{0}-a_{k}\le \gamma(\phi_{t})$. The approach in \S\ref{sec:algebra} is based on the module action of Hamiltonian Floer cohomology on Lagrangian Floer cohomology considered in \cite{kislev_shelukhin}. One only considers Lagrangian Floer cohomology defined in action windows smaller than the disk bubbling threshold. In \S\ref{sec:moduli-approach}, we explain how to deform the pair-of-pants product on Hamiltonian Floer cohomology using a compact Lagrangian in such a way which circumvents the need to consider Lagrangian Floer cohomology entirely, while still concluding a configuration of strips as in Figure \ref{fig:sequence-floer-strip}.

\subsubsection{The Lagrangian cap-action in the weakly exact case}
\label{sec:lagr-cap-acti}

In the weakly-exact setting, Lagrangian Floer cohomology $\hf(L,\phi_t)$ is well-defined, since there is no disk bubbling; see, e.g., \cite{kislev_shelukhin}. The PSS isomorphism provides an identification: \[\pss_{L,\phi_t}:H(L)\rightarrow \hf(L,\phi_t).\] Moreover, every bordism class $\Pi$ of maps $f:P\rightarrow L$ has a corresponding Lagrangian cap action map: \[\mathrm{cap}_{\Pi}:\hf(L,\phi_t)\rightarrow\hf(L,\phi_t).\] On the chain level, the map is defined by picking a representative $f$ of $\Pi$ and counting Floer strips $u:\R\times[0,1]\rightarrow W$ satisfying $u(\R\times\{0\}),u(\R\times\{1\})\in L$ and $u(0,0)\in f(P)$. The cap action is associative: \[\mathrm{cap}_{\Pi\cap\Pi^\prime}=\mathrm{cap}_{\Pi}\circ\mathrm{cap}_{\Pi^\prime},\] and is compatible with the PSS isomorphism: \[\mathrm{cap}_{\Pi}(\pss_{L,\phi_t}(\Pi^\prime))=\pss_{L,\phi_t}(\Pi\cap\Pi^\prime);\] see e.g., \cite[\S4]{le-ono} and \cite{floer_cup} for associativity and, e.g., \cite[\S3]{pss} and \cite[\S2.3]{schwarz_spectral_invariants} for compatibility with PSS. The weakly-exact open-string case is handled analogously to the closed-string case.

It is a convenient fact that the cup-length in unoriented bordism is the same as cup-length in singular cohomology with $\F_2$ coefficients; see \cite[Theorem III.2]{thom-quelques-proprietes}, and, e.g., \cite[\S3.4]{wilkins_quantum_steenrod} and \cite[Theorem B]{buoncristiano-hacon}. Therefore, there exist bordism classes $\Pi_1,\dots,\Pi_k$ of maps $f_{i}:P_i\rightarrow L$ of positive codimension, for $i=1,\dots,k$, such that the intersection product $\Pi_1\cap\cdots\cap\Pi_k$ equals the point class $[\mathrm{pt}]$. Becausethe codimension is $\ge 1$, we can make the images of $f_{i}$ disjoint from $\phi_{1}(L)\cap L$ (assuming the intersections form an isolated set).

On the one hand, the cap action of the point class is non-trivial since: \[\mathrm{cap}_{[\mathrm{pt}]}(\pss_{L,\phi_t}([L]))=\pss_{L,\phi_t}([\mathrm{pt}]\cap[L])=\pss_{L,\phi_t}([\mathrm{pt}]).\] On the other hand, by associativity, we have:\[\mathrm{cap}_{[\mathrm{pt}]}=\mathrm{cap}_{\Pi_{1}}\circ\cdots\circ\mathrm{cap}_{\Pi_{k}}.\] The non-triviality of the above chain of compositions implies, in particular, that there exists a sequence of $k$ Floer strips with point constraints as illustrated in Figure \ref{fig:sequence-floer-strip}.

 \begin{figure}[H]
  \centering
  \begin{tikzpicture}
    \draw (0,0) rectangle (3,1);
    \draw (3.5,0) rectangle +(3,1);
    \draw (7.3,0) rectangle +(3,1);
    \path (3,0.5)node[left]{$\gamma_{1,+}$}--+(3.5,0)node[left]{$\gamma_{2,+}$}--+(7.3,0)node[left]{$\gamma_{k,+}$};
    \path (0,0.5)node[right]{$\gamma_{1,-}$}--+(3.5,0)node[right]{$\gamma_{2,-}$}--+(7.3,0)node[right]{$\gamma_{k,-}$};
    \path (6.5,0.5)node[right]{$\cdots$};
    \path[every node/.style={draw,circle,fill=black,inner sep=1pt}] (1.5,0)node{}--+(3.5,0)node{}--+(7.3,0)node{};
    \path (1.5,0)node[below]{$f_{1}(P_{1})$}--+(3.5,0)node[below]{$f_{2}(P_{2})$}--+(7.3,0)node[below]{$f_{k}(P_{k})$};
  \end{tikzpicture}
  \caption{A sequence of Floer strips with point constraints and uniformly bounded energy converges-up-to-breaking to a sequence of Floer strips, satisfying action bounds $\cA_{\phi_t}(\gamma_{j+1,-})\le \cA_{\phi_t}(\gamma_{j,+})<\cA_{\phi_t}(\gamma_{j,-})$.}
  \label{fig:sequence-floer-strip}
\end{figure}
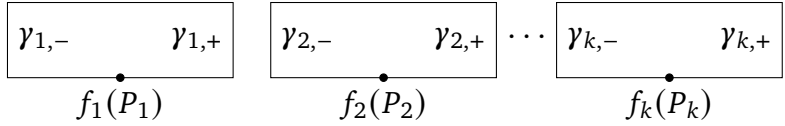
The strict inequalities $\cA_{\phi_t}(\gamma_{j,-})<\cA_{\phi_t}(\gamma_{j,+})$, for all $j\in\{1,\dots,k\}$, follow from the fact that each Floer strip is non-stationary because of the incidence constraint. Thus, there are at least $k+1$ action values, which concludes the argument.

\subsubsection{An algebraic approach}
\label{sec:algebra}

The approach in this section is heavily inspired by \cite[Theorem E]{kislev_shelukhin}. It relies on defining Lagrangian Floer cohomology, along with zero curvature operations, in an action window that is small enough to prevent bubbling yet sufficiently large to detect cohomological information of the Lagrangian. While this approach can likely be generalized to the convex-at-infinity setting, in this section we restrict ourselves to the case where $L$ is a monotone Lagrangian of a closed symplectic manifold $(W,\om)$ since the tools required have been carefully defined in \cite{kislev_shelukhin}.

Given a Hamiltonian system $\phi_t$ and an $\om$-compatible almost complex structure $J$, for each interval $I$ of length $\abs{I}<\hbar(J,L)$, let $\cf(\phi_t,L;\cD)^{I}$ be the Floer complex generated by capped (contractible) chords whose action values belong to the interval $I$; one should suppose that the endpoints of $I$ are not action values of chords. For generic perturbation data $\cD$, the differential is well defined since bubbling is prevented by the narrow action window. We denote by $\hf(\phi_t,L;\cD)^{I}$ the corresponding cohomology. As in \S\ref{sec:lagr-cap-acti}, it is possible to define a Lagrangian cap-action associated to a bordism class $\Pi$. Depending on the action window, these maps could very well be trivial.

A new input compared to \S\ref{sec:lagr-cap-acti} is the following: to a cocycle $z\in\cf(\psi_t; \cD)$ in the Hamiltonian Floer cohomlogy of action $\cA_{\psi_t}(z)=c$, there corresponds a multiplication operation:
\begin{equation*}
  [\mu(-,z)]:\hf(L,\phi_t;\cD)^{I}\rightarrow\hf(L,\psi_t\circ\phi_t;\cD)^{I+c+\epsilon},
\end{equation*}
where $\epsilon>0$ is an error term related to the perturbation data $\mathscr{D}$. In short, $\mu(-,z)$ is defined by counting rigid one-punctured Floer strips with Lagrangian boundary conditions, whose ends are asymptotic to Hamiltonian chords of $\phi_t$ and $\psi_t\circ\phi_t$ and whose interior puncture is asymptotic to a one-periodic orbit of $\psi_t$ belonging to the linear combination expressing $z$; see Figure \ref{fig:one-tower-module-action}.

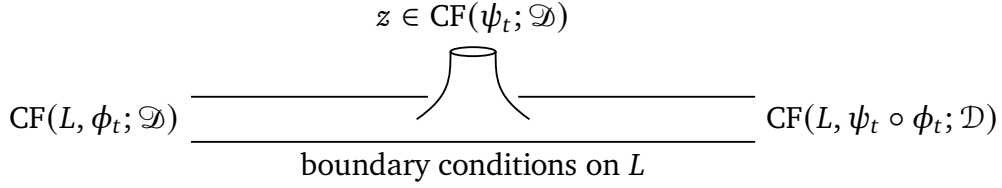
\begin{figure}[H]
  \centering
  \begin{tikzpicture}[yscale=0.6,xscale=1.5]
    \draw[line width=.7] (0,0)--(5,0) (0,1)--(2.1,1) (2.9,1)--(5,1);
    \draw[line width=.7] (2,0.5)to[out=60,in=-90](2.3,2) (3,0.5)to[out=120,in=-90](2.7,2) (2.5,2) circle (0.2 and 0.1)node(X){};
    \node[shift={(0,0.1)},above] at (X) {$z\in \mathrm{CF}(\psi_{t};\mathcal{D})$};
    \node at (0,0.5) [left]{$\mathrm{CF}(L,\phi_{t};\mathcal{D})$};
    \node at (5,0.5) [right]{$\mathrm{CF}(L,\psi_{t}\circ \phi_{t};\mathscr{D})$};
    \node at (2.5,0)[below]{boundary conditions on $L$};
  \end{tikzpicture}
  \caption{The module action of Hamiltonian Floer cohomology on Lagrangian Floer cohomology; see \cite[\S4.1]{kislev_shelukhin}.}
  \label{fig:one-tower-module-action}
\end{figure}

The goal is to find an action interval $I$ of length less than $\hbar(L)$ on which composing $k$ times the restricted Lagrangian cap-action is a non-trivial operation, as this guarantees the existence of a chain of $k$ Floer strips exactly as in \S\ref{sec:lagr-cap-acti}.

Suppose $\phi_{t}$ is a Hamiltonian system with $\gamma(\phi_{t})<\hbar(L)$. Essentially by the definition of the spectral norm, there are cocycles $x\in\cf(\phi^{-1}_t;\cD)$ and $y\in\cf(\phi_t;\cD)$, representing the unit elements, with actions:
\begin{equation*}
  u=\cA_{\phi_t^{-1}}(x)=c(1,\phi_t^{-1})\quad\text{and}\quad v=\cA_{\phi_t}(y)=c(1,\phi_t)\quad\text{ and }\quad u+v=\gamma(\phi_{t}).
\end{equation*}
The idea is to prove that, for some interval $I$ of length $<\hbar(L)$, the following composition:
\begin{equation}\label{eq:kislev-shelukhin-composition}
  [\mu(-,y)]\circ\mathrm{cap}_{[\mathrm{pt}]}\circ[\mu(-,x)]: \hf(L,\id;\cD)^{I} \rightarrow\hf(L,\id;\cD)^{I+\gamma(\phi)+2\epsilon}
\end{equation}
is non-trivial, and hence $\mathrm{cap}_{[\mathrm{pt}]}:\mathrm{HF}(L,\phi_{t};\mathscr{D})^{I+u+\epsilon}\to \mathrm{HF}(L,\phi_{t};\mathscr{D})^{I+u+\epsilon}$ is non-trivial.

Note that $\mathrm{cap}_{[\mathrm{pt}]}=\mathrm{cap}_{\Pi_1}\circ\cdots\circ\mathrm{cap}_{\Pi_k}$ by the associativity of the Lagrangian cap-action, and once we know this $k$-fold composition is non-trivial, the argument proceeds exactly as in \S\ref{sec:lagr-cap-acti}. Therefore, it is enough to show that \eqref{eq:kislev-shelukhin-composition} is non-trivial in the specified action windows. To this end, we recall the associativity relation: \[[\mu(-,y)]\circ\mathrm{cap}_{[\mathrm{pt}]}\circ[\mu(-,x)]=\mathrm{cap}_{[\mathrm{pt}]}\circ[\mu(-,\mu_2(x,y))],\] from \cite[\S5.1, (20)]{kislev_shelukhin}, where $\mu_2(x,y)$ represents the unit class in $\cf(\id;\cD)$. Furthermore, $\Phi=[\mu(-,\mu_2(x,y))]$ induces the interval shift map: \[\Phi:\hf(L,\id;\cD)^{I}\rightarrow\hf(L,\id;\cD)^{I+\gamma(\phi_{t})+2\epsilon}.\] To conclude the argument, we observe that the following diagram is commutative:
\begin{equation*}\label{eq:commutative_diagram}
  \begin{tikzcd}
    \hf(L,\id;\cD)^{I} \arrow[r,] & \hf(L,\id;\cD)^{I+\gamma(\phi_{t})+2\epsilon}\\
    H(L)\arrow[u]\arrow[r,"{\cap[\mathrm{pt}]}"] & H(L)\arrow[u],
  \end{tikzcd}
\end{equation*}
where the top horizontal arrow is the composition $\mathrm{cap}_{[\mathrm{pt}]}\circ\Phi$ and the vertical arrows are the PSS morphisms. Moreover, \emph{if $I$ and $I+\gamma(\phi_{t})+2\epsilon$ both contain $0$, then both vertical maps are injective}, and hence the top map is non-trivial, as desired. Thus the problem boils down to finding an interval $I$ so that $\abs{I}<\hbar(L)$ and so that $I,I+\gamma(\phi_{t})+2\epsilon$ both contain $0$. It is clear that a necessary and sufficient condition for this to hold is that $\gamma(\phi_{t})<\hbar(L)$. This concludes the sketch of the proof of Theorem \ref{thm:main} in the closed monotone setting using the framework introduced by \cite{kislev_shelukhin}.

\subsubsection{A moduli space approach}
\label{sec:moduli-approach}

The approach to proving Theorem \ref{thm:main} adopted in this paper is to deform the pair-of-pants operation on Hamiltonian Floer cohomology using the compact Lagrangian $L$. The deformation is illustrated in Figures \ref{fig:deforming-pop} and \ref{fig:deforming-pop-2}. The details of the deformation argument are given in \S\ref{sec:deforming-pair-pants}.

The crux of the matter is to construct curves with Lagrangian boundary condition which contain conformally embedded strips with a large modulus, as in Figure \ref{fig:deforming-pop-2}. By an appropriate compactness argument, one concludes the existence of chains of Floer strips needed to prove Theorem \ref{thm:main}; the argument proceeds as in \S\ref{sec:overview}.

\begin{figure}[H]
  \centering
  \begin{tikzpicture}[yscale=.7]
    \draw (0,2) circle (0.5 and 0.1) coordinate (X1) (2,2) circle (0.5 and 0.1) coordinate(X2) (1,0) circle (0.5 and 0.1) coordinate(X3);
    \path (X1)+(0.5,0)coordinate(X1p)--+(-0.5,0)coordinate(X1m)--(X3)--+(0.5,0)coordinate(X3p)--+(-0.5,0)coordinate(X3m)--(X2)--+(0.5,0)coordinate(X2p)--+(-0.5,0)coordinate(X2m);
    \draw (X1p)to[out=-90,in=-90](X2m) (X2p)to[out=-90,in=90](X3p) (X3m)to[out=90,in=-90](X1m);
    \draw (1,-0.3) circle(0.5 and 0.1) coordinate(X) (1,-1) circle (1 and 0.2) coordinate(Y);
    \path (X)--+(-0.5,0)coordinate(Xm)--+(0.5,0)coordinate(Xp)--(Y)--+(1,0)coordinate(Yp)--+(-1,0)coordinate(Ym);
    \draw (Xm)to[out=-90,in=70] (Ym) (Xp)to[out=-90,in=110] (Yp);
    \node at (X3m) [left] {$\gamma_{\infty}$};
    \node at (X1m) [left] {$\gamma_{0}$};
    \node at (X2p) [right] {$\gamma_{1}$};
    \node at (Ym) [left] {$L$};
    \node[shift={(0,-0.14)}, fill=black,circle,inner sep=1pt] (X)at (Y){};
    \node at (X)[below]{$\mathrm{pt}$};
    \begin{scope}[shift={(5,-0.5)}]
      \draw (0,2) circle (0.3 and 0.1) coordinate (X1) (3,2) circle (0.3 and 0.1) coordinate (X2) (1.5,0) circle (2.3 and 0.3) coordinate(X3);
      \foreach \a in {1,2} {
        \path (X\a)+(0.3,0)coordinate(Xp\a)--+(-0.3,0)coordinate(Xm\a);
      }
      \path (X3)+(2.3,0)coordinate(Xp3)--+(-2.3,0)coordinate(Xm3);
      \draw (Xp1)to[out=-90,in=-90,looseness=2](Xm2) (Xm1)to[out=-90,in=60](Xm3) (Xp2)to[out=-90,in=120](Xp3);
      \node at (Xm3) [left] {$L$};
      \node at (Xm1) [left] {$\gamma_{0}$};
      \node at (Xp2) [right] {$\gamma_{1}$};
      \node[shift={(0,-0.21)}, fill=black,circle,inner sep=1pt] (X) at (X3){};
      \node at (X)[below]{$\mathrm{pt}$};
    \end{scope}
  \end{tikzpicture}
  \caption{(left) Gluing the pair-of-pants onto a half-infinite cylinder; (right) deforming the conformal structure of the resulting Riemann surface.}
  \label{fig:deforming-pop}
\end{figure}
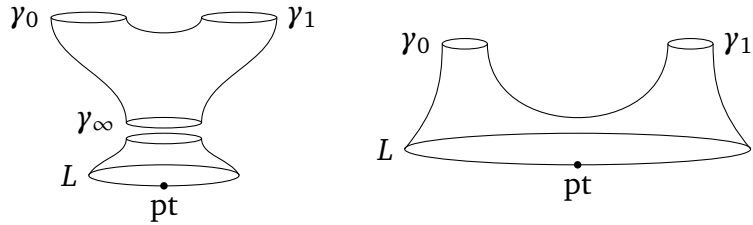

\begin{figure}[H]
  \centering
  \begin{tikzpicture}[yscale=.7]
    \draw (-2,2) circle (0.3 and 0.1) coordinate (X1) (5,2) circle (0.3 and 0.1) coordinate (X2) (1.5,0) circle (5.3 and 0.3) coordinate(X3);
    \foreach \a in {1,2} {
      \path (X\a)+(0.3,0)coordinate(Xp\a)--+(-0.3,0)coordinate(Xm\a);
    }
    \path (X3)+(5.3,0)coordinate(Xp3)--+(-5.3,0)coordinate(Xm3);
    \draw (Xp1)to[out=-90,in=-90,looseness=0.8](Xm2) (Xm1)to[out=-90,in=30](Xm3) (Xp2)to[out=-90,in=150](Xp3);
    \node at (Xm3) [left] {$L$};
    \node at (Xm1) [left] {$\gamma_{0}$};
    \node at (Xp2) [right] {$\gamma_{1}$};
    \path (X3)--+(-70:5.3 and 0.3)coordinate(S1)--+(-90:5.3 and 0.3)coordinate(S2)--+(-110:5.3 and 0.3)coordinate(S3);
    \path[every node/.style={fill,circle,inner sep=1pt}] (S1)node {} -- (S2) node{} -- (S3) node {};
    \path[every node/.style={below}] (S1)node {$f_{3}(P_{3})$} -- (S2) node{$f_{2}(P_{2})$} -- (S3) node {$f_{1}(P_{1})$};
  \end{tikzpicture}
  \caption{Deforming the conformal structure, and splitting the point constraints. The homological count of elements of such a deformed pair-of-pants will equal $1$ if the intersection of the bordism classes equals the point class $\mathrm{pt}$.}
  \label{fig:deforming-pop-2}
\end{figure}
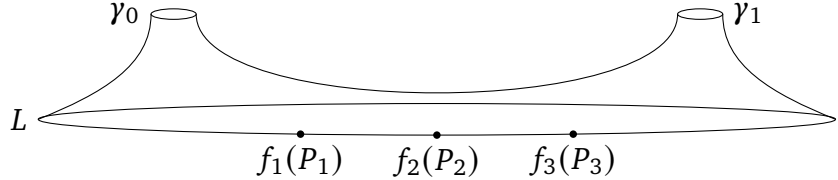

There is one subtlety in the proof of Theorem \ref{thm:main} which we explain here. The arguments in \S\ref{sec:deforming-pair-pants} imply that for each Hamiltonian system $\phi_t$ with $\phi_1=\phi$ there is an interval of length $\gamma(\phi_t)$ containing at least $\cl_{\F_2}(L)+1$ action values. It is a standard fact that if $\phi^\prime_{t}$ is another Hamiltonian system with $\phi^\prime_1=\phi$ then the action spectrums of $\phi_{t}$ and $\phi_{t}'$ coincide up to a shift by a constant depending only on the Hamiltonian loop $\phi^\prime_t\circ\phi_t^{-1}$; see, for example, \cite[Proposition 31]{kislev_shelukhin}.

Therefore, for all Hamiltonian systems $\phi_{t}$ generating $\phi$, it is possible to find a closed interval $I(\phi_{t}')$ of length $\gamma(\phi_{t}')$ containing $\cl_{\F_2}(L)+1$ action values of $\mathscr{A}_{\phi_{t}}$. The infimum of $\gamma(\phi_{t}')$ over $\phi_{t}'$ equals the spectral norm $\gamma(\phi)$. In the weakly-exact case the intervals $I(\phi_{t}')$ must remain in a fixed compact set (because $\phi_{1}(L)\cap L$ is finite). Otherwise, one can exploit the periodicity of the action spectrum and shift $I(\phi_{t}')$ to ensure the intervals do not drift off to infinity. In either case, a standard compactness argument for shrinking intervals contained in a compact set ensures the existence of an interval of length $\gamma(\phi)$ containing $\mathrm{cl}_{\F_{2}}(L)+1$ action values.

\subsection{Acknowledgements}
\label{sec:acknowledgements}

First and foremost the authors wish to thank E.~Shelukhin for suggesting this project and providing valuable guidance. The authors also wish to thank O.~Cornea, P-A.~Mailhot, and P.~Biran for clarifying discussions. This work is part of the first two authors' Ph.D.~theses under the supervision of E.~Shelukhin. The authors were supported in their research at Universit\'e de Montr\'eal by funding from the Fondation Courtois, the ISM, the FRQNT, and the Fondation J.~Armand Bombardier.

\section{Floer cohomology in convex-at-infinity symplectic manifolds}
\label{sec:floer-cohomology}

\subsection{Convex ends}
\label{sec:convex_end}

A convex end is a non-compact symplectic manifold modelled on the positive half of the symplectization of a contact manifold; see \cite{yasha-gromov}, \cite[\S11]{cieliebak_eliashberg_stein}, and \cite{ginzburg-2005-weinstein,mcduffsalamon,frauenfelder_schlenk,lanzat_quasimorphisms,lanzat_convex_HF}. Below, we define the class of symplectic manifolds called \emph{convex-at-infinity} described in \S\ref{sec:main_results}. In \S\ref{sec:structural_results}, it is shown that every convex-at-infinity manifold $W$ can be expressed as the completion of a compact symplectic manifold $\Omega$ with contact type boundary $\bd\Omega$ (allowing $\bd \Omega=\emptyset$); here \emph{contact type} is understood in the sense of \cite{weinstein-conjecture,mcduff-contact-boundaries}.

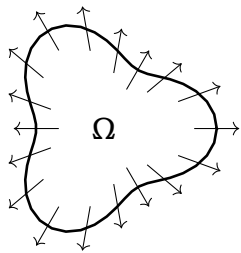
\begin{figure}[H]
  \centering
  \begin{tikzpicture}[scale=0.6]
    \node at (0,0) {$\Omega$};
    \foreach \x in {0,20,...,340} {
      \draw[->] (\x:{1.5+0.5*cos(3*\x)})--(\x:{2.5+0.5*cos(3*\x)});
    }
    \draw[line width=1pt] plot[variable=\r,domain=0:360,samples=50] ({\r}:{2+0.5*cos(3*\r)}) --cycle;
  \end{tikzpicture}
  \caption{Every convex-at-infinity symplectic manifold can be presented as the completion of a starshaped domain $\Omega$. The characteristic foliation of the boundary $\bd\Omega$ is the Reeb flow for some choice of contact form on the ideal boundary $Y$.}
  \label{fig:completion}
\end{figure}

\subsubsection{Definition of a convex end}
\label{sec:defin-conv-end}
Let $(W,\omega)$ be a symplectic manifold. Suppose there is a complete vector field $Z$ whose time $s$ flow $\rho_{s}$ satisfies the following properties:
\begin{enumerate}
\item for any sequence $z_{n}$ there is a sequence $S_{n}<0$ so that $\rho_{s_{n}}(z_{n})$ has a convergent subsequence whenever $s_{n}<S_{n}$,
\item there is a compact set $K_{1}$ so that, if $\rho_{s_{n}}(z_{n})$ and $z_{n}$ converge, and $z_{n}$ is not in $K_{1}$, then $s_{n}$ is bounded from above,
\item $(\rho_{s}^{*}\omega)_{z}=e^{s}\omega_{z}$ holds for $s>0$ and $z$ outside a compact set $K_{2}$.
\end{enumerate}
Note that if $Z_{1}=Z_{2}$ holds outside a compact set, and $Z_{1}$ satisfies the above properties, then so does $Z_{2}$, and we say $Z_{1},Z_{2}$ are equivalent. A \emph{convex end} on $W$ is an equivalence class of such vector fields $Z$. If $W$ is a compact symplectic manifold, then we can take $K_{1}=K_{2}=W$, in which case the axioms are trivially satisfied for any vector field.

The flow by $Z$ (outside $K_{2}$) will be referred to as the \emph{Liouville flow}.

\subsubsection{Structural results about convex ends}
\label{sec:structural_results}

Let $V$ be the open set of points in $W$ lying on trajectories of $Z$ which pass through $K_{1}^{c}$. Axioms (i) and (ii) ensure that $\rho_{s}$ defines a free and proper $\R$-action on $V$; the details of the argument are left to the reader.

It follows that $V\to V/\R$ is a smooth submersion to a manifold whose fibers are the trajectories of $Z$. We claim that $V/\R$ is a compact manifold. Indeed, pick a sequence $z_{n}$ in $V$ and choose a compact neighborhood $K_{1}'$ of $K_{1}$ which contains $K_{1}$ in its interior. There is $s_{n}$ so $\rho_{s_{n}}(z_{n})\in K_{1}'$, because the limit points of $\rho_{s}(z_{n})$ as $s$ converges to $-\infty$ are non-empty, by (i), and must be contained in $K_{1}$ by the properness established above. Then $\rho_{s_{n}}(z_{n})$ has a convergent subsequence since $K_{1}'$ is compact. Thus $V/\R$ is compact.

A variant of the Ehresmann fibration theorem implies that $V\to V/\R$ is a fiber bundle whose structure group is the group of translations on $\R$. Thus $V\to V/\R$ admits a section, denoted $\bd \Omega$. It follows that $Z$ is transverse to $\bd\Omega$, and the flow map takes $\bd \Omega\times [0,\infty)$ onto a neighborhood of $\infty$ for $W$. Note that $\bd\Omega$ bounds a compact domain $\Omega$ in $W$, by axiom (i).

Replacing $\bd\Omega$ by $\rho_{s}(\bd\Omega)$, we may assume from axiom (iii) that $\rho_{s}^{*}\omega=e^{s}\omega$ holds on the image of $\bd\Omega\times [0,\infty)$. The region $\bd\Omega\times [0,\infty)$ is symplectomorphic to the positive half of a symplectization of $\bd\Omega$ with contact form $\alpha=\lambda|_{\bd\Omega}$, where $\lambda=\omega(Z,-)$.  Let us refer to such a domain $\Omega$ as \emph{starshaped}.

The induced contact distribution on $V/\R$ is independent of $\Omega$ and the resulting contact manifold is called the \emph{ideal contact boundary} of $W$.

\subsubsection{Contact-at-infinity Hamiltonian systems}
\label{sec:admissible_hams}
With respect to the convex end in \S\ref{sec:convex_end}, a Hamiltonian system $\varphi_{t}$ is said to be \emph{contact-at-infinity} provided it is equivariant with respect to the positive Liouville flow, i.e., $\varphi_{t}\circ \rho_{s}(z)=\rho_{s}\circ \varphi_{t}(z)$ for $s\ge 0$, outside of a compact set. It follows that any Hamiltonian function generating $\varphi_{t}$ is one-homogeneous outside of a compact set, up to the addition of a function which is locally constant outside of a compact set; here a function $f:W\to \R$ is \emph{one-homogeneous} if $f\circ \rho_{s}(x)=e^{s}f(x)$ holds for $s\ge 0$.

Since $\varphi_{t}$ is equivariant with respect to the Liouville flow in the end $\bd\Omega\times [0,\infty)$, for suitably large star-shaped domain, the induced flow on the ideal boundary is a contact isotopy called the \emph{ideal restriction} of the system $\varphi_{t}$.

\subsubsection{Reeb flow in the convex end}
\label{sec:reeb-flow-convex-end}

A starshaped domain $\Omega$ induces a function $r$ by requiring that $r$ is $1$-homogeneous outside of $\Omega$ and $r|_{\bd\Omega}=1$. The function $r$ should be extended smoothly to all of $W$, in such a way that $r\le 1$ holds on $\Omega$.

The restriction of $\lambda$ to $\bd\Omega$ defines a contact form $\alpha$ for the ideal contact boundary. The Reeb flow for $\alpha$ is the ideal restriction of the Hamiltonian system generated by $r$.

Let $f$ be a smooth convex function so that $f(x)=x$ for $x\ge 1$, and $f(x)=0$ for $x\le 0$. Consider $f(r-r_{0})+r_{0}$ as generating an autonomous Hamiltonian system, and let $R_{t}^{\alpha}$ denote the flow induced by $X_{f(r-r_{0})+r_{0}}$. This system equals the identity on the region where $r< r_{0}$ and equals the Reeb flow for $\alpha$ on the region where $r\ge r_{0}+1$.

\subsection{Cappings and the Hamiltonian action functional}
\label{sec:hamiltonian_action}

The Hamiltonian action functional is defined on suitable a covering space of the space of contractible loops in $W$ or paths in $W$ with endpoints on the Lagrangian $L$.

\subsubsection{Cappings of chords and orbits}
\label{sec:capp-chords-orbits}

Every contractible loop or path $\gamma$ can be joined to a constant loop or path via a smooth map $u:[0,1]\times S\to W$, where $S=\R/\Z$ or $S=[0,1]$, so that (i) $u(0,t)$ is constant, (ii) $u(s,0),u(s,1)\in L$ in the case $S=[0,1]$, and (iii) $u(1,t)=\gamma(t)$. Such maps can be considered up to homotopy, and a homotopy class $[u]$ is called a \emph{capping} of $\gamma$. The projection $(\gamma,[u])\mapsto \gamma$ is a covering space.

\subsubsection{The Hamiltonian action functional}
\label{sec:hamilt-acti-funct}

Given a contact-at-infinity Hamiltonian system $\varphi_{t}$ one defines a \emph{Hamiltonian action functional} on the covering space by:
\begin{equation}\label{eq:hamiltonian_action}
  \mathscr{A}_{\varphi_{t}}(\gamma,[u])\coloneq\int_{0}^{1}H_{t}(\gamma(t))\,\d t+\mathscr{A}(\gamma,[u]),
\end{equation}
where $H_{t}$ is the unique normalized time-dependent family of smooth functions generating $\varphi_{t}$; see \S\ref{sec:norm-cond} for the normalization conditions. It is well-known, and easy to check, that the critical points of $\mathscr{A}_{\varphi_{t}}$ are lifts of contractible orbits or chords of the system $\varphi_{t}$.

If $\gamma$ is a critical point of $\mathscr{A}_{\varphi_{t}}$, then the fiber over $\gamma$ is in bijection with the set $\pi_{2}(W,\gamma(0))$ or $\pi_{2}(W,L,\gamma(0))$. The action values attained on this fiber consist, up to a constant depending on $\gamma$, of the values $\omega(\pi_{2}(W))\subset \R$ or $\omega(\pi_{2}(W,L))\subset \R$.

\subsubsection{Normalization conditions}
\label{sec:norm-cond}

Throughout this paper we assume that $W$ is connected. We do not assume that the ideal boundary $Y$ is connected, and for this reason we pick a connected component in $Y$ to be the \emph{distinguished} component. This choice is only used to define the normalization condition for Hamiltonian functions.

If $(W,\omega)$ is compact, say that $H\in C^{\infty}(W,\R)$ is \emph{normalized} if $H$ has mean zero with respect to the volume form $\omega^{n}$; if $(W,\omega)$ is non-compact (with a convex end), say that $H$ is \emph{normalized} if $H$ is one-homogeneous in the distinguished end.

Crucially: \emph{any constant normalized function is zero}.

\subsection{Hamiltonian Floer cohomology}
\label{sec:HFC-intro}
Let $(W,\omega)$ be a convex-at-infinity symplectic manifold, and let $\phi_{t}$ be a compactly supported Hamiltonian system.

To define Hamiltonian Floer cohomology for $\phi_{t}$, its necessary to deform $\phi_{t}$ on the non-compact end to make its orbits non-degenerate; we follow the approach of, for example, \cite{seidel-biased,frauenfelder_schlenk,ritter_tqft,PA_spectral_diameter} and consider systems of the form $R_{\epsilon t}^{\alpha}\circ \phi_{t}$, where $\alpha$ is a contact form on the ideal contact boundary of $W$ and $R_{s}^{\alpha}$ denotes the time $s$ Reeb flow, extended to the compact part of $W$ as in \S\ref{sec:reeb-flow-convex-end}.

Assume that $R_{\epsilon t}^{\alpha}=\id$ on the support of $\phi_{t}$ and $\phi_{t}^{-1}$, so that $\phi_{t}\circ R_{\epsilon t}^{\alpha}=R_{\epsilon t}^{\alpha}\circ \phi_{t}$. If $\epsilon>0$ is not a period of a closed Reeb orbit, then the system $R_{\epsilon t}^{\alpha}\circ \phi_{t}$ will have its fixed point contained in a compact subset of $W$. A compactly supported perturbation $\delta_{t}$ will ensure that the system $\delta_{t}\circ R_{\epsilon t}^{\alpha}\circ \phi_{t}$ has finitely many non-degenerate fixed points.

Note that the class of systems of the form $\delta_{t}\circ R_{\epsilon t}^{\alpha}\circ \phi_{t}$ is unchanged if we permute the order of the three terms $\delta_{t},R_{\epsilon t}^{\alpha},\phi_{t}$ (the perturbation term $\delta_{t}$ will change, but the class of systems is preserved).

The Floer complex for the perturbed system is defined to be the $\F_2$-vector space: $$\mathrm{CF}(\delta_{t}\circ R^{\alpha}_{\epsilon t}\circ \phi_{t},J)$$ of semi-infinite sums of capped $1$-periodic orbits of the perturbed system. The Floer differential depends on a choice of almost complex structure $J$, and counts Floer cylinders going from right-to-left as shown in Figure \ref{fig:cohomological_differential}; see \S\ref{sec:indep-choice-almost} for further discussion of the choice of almost complex structure.

The sums are semi-infinite in the following sense: for any action value $a$, there are only finitely many non-zero terms whose action is less than $a$. The cohomological differential increases action, and hence the subspace of semi-infinite sums of orbits whose action is at least $a$ is a subcomplex.

\begin{figure}[H]
  \centering
  \begin{tikzpicture}
    \draw (0,0)--node[pos=0.5,shift={(0,0.5)}]{$\bd_{s}u+J(u)(\bd_{t}u-X_{t})=0$}(8,0)arc(-90:90:{0.25 and 0.5})node[pos=0.5,right]{input, $x_{+}$}coordinate(X)--(0,1)arc(90:450:{0.25 and 0.5})node[pos=0.25,left]{output, $x_{-}$};
    \draw[dashed] (X)arc(90:270:{0.25 and 0.5});
  \end{tikzpicture}
  \caption{Cohomological convention for Floer cylinders.}
  \label{fig:cohomological_differential}
\end{figure}

The homology is denoted $\mathrm{HF}(\delta_{t}\circ R^{\alpha}_{\epsilon t}\circ \phi_{t})$. Counting continuation cylinders produces maps $\mathrm{HF}(\delta_{t}'\circ R^{\alpha}_{\epsilon' t}\circ \phi_{t})\to \mathrm{HF}(\delta_{t}\circ R^{\alpha}_{\epsilon t}\circ \phi_{t})$ provided $\epsilon'\le \epsilon$; see \S\ref{sec:continuation-maps}. One should note that continuation maps are not supposed to preserve the action filtration.

The Floer cohomology of the system $\phi_{t}$ is defined as a limit over continuation maps:
\begin{equation*}
  \mathrm{HF}(\phi_{t})=\lim_{\epsilon\to 0}\lim_{\delta\to 0} \mathrm{HF}(\delta_{t}\circ R_{\epsilon t}^{\alpha}\circ \phi_{t}).
\end{equation*}
By definition, the action of a cohomology class is given by the formula:
\begin{equation}\label{eq:min-max-action-formula}
  \mathscr{A}([x])=\sup\set{\mathscr{A}(x+\d\beta):\beta\in \mathrm{CF}(\delta_{t}\circ R_{\epsilon t}^{\alpha}\circ \phi_{t})},
\end{equation}
where $\mathscr{A}(\textstyle\sum a_{i}x_{i})=\min\set{\mathscr{A}(x_{i}):a_{i}\ne 0}$. For any element $\mathfrak{e}\in \mathrm{HF}(\phi_{t})$, one can consider its image $\mathfrak{e}_{\delta,\epsilon}\in \mathrm{HF}(\delta_{t}\circ R_{\epsilon t}^{\alpha}\circ \phi_{t})$ and take the action $\mathscr{A}(\mathfrak{e}_{\delta,\epsilon})$. In \S\ref{sec:cont-spectr-invar}, it is shown that $\mathscr{A}(\mathfrak{e}_{\delta,\epsilon})$ converges as $\delta,\epsilon$ converge to zero; we call this number the \emph{min-max action value} $\mathscr{A}_{\phi_{t}}(\mathfrak{e})$ of the class $\mathfrak{e}$.

Standard continuation arguments produce canonical isomorphisms $\mathrm{HF}(\id)\to \mathrm{HF}(\phi_{t})$ which are coherent with respect to continuation maps. If $\mathfrak{e}\in \mathrm{HF}(\id)$, then the min-max action value of the image of $\mathfrak{e}$ in $\mathrm{HF}(\phi_{t})$ is called the \emph{spectral invariant} of $\mathfrak{e}$ and is denoted $c(\mathfrak{e},\phi_{t})$.

\subsubsection{Spectral norm for a compactly supported system}
\label{sec:spectr-norm-comp}

In \S\ref{sec:pss-unit-element} the distinguished unit element $1_{\phi_{t}}\in \mathrm{HF}(\phi_{t})$ is recalled. The \emph{spectral norm} of the system is defined by the formula \eqref{eq:defin-spectral-norm} in \S\ref{sec:introduction}. It is proved in \cite{schwarz_spectral_invariants,frauenfelder_schlenk} that this depends only on the time-one map $\phi_{1}$, in the case when $W$ is aspherical. See also \cite{oh-2005-birkhauser,oh-2005-duke} for the definition of the spectral norm in the presence of holomorphic spheres.

In any case, without appealing to these results, the spectral norm of a time-1 map $\phi_{1}$ is defined to be the infimum of the spectral norms of all systems which generate $\phi_{1}$.

\subsubsection{Floer differential}
\label{sec:floer-differential}

In general, if $\varphi_{t}$ is a non-degenerate contact-at-infinity system (e.g., the system $\delta_{t}\circ R_{\epsilon t}\circ \phi_{t}$), one can form the vector space $\mathrm{CF}(\varphi_{t},J)$ of semi-infinite sums of capped contractible orbits. To define the Floer differential, one requires an almost complex structure $J$ which is tamed by $\omega$ and Liouville-equivariant outside of a compact set; see \S\ref{sec:tame-sympl-manif} for further discussion. Associated to this pair one considers the moduli space $\mathscr{M}(\varphi_{t},J)$ of Floer cylinders, as in Figure \ref{fig:cohomological_differential}.

Because we are using the semipositive framework for ensuring compactness, as in \cite{hofer-salamon-95}, we need to impose a few slightly technical conditions. First of all, we pick $J$ so that the moduli space $\mathscr{M}^{*}(A,J)$ of simple parametrized holomorphic spheres is cut transversally and the evaluation map:
\begin{equation}\label{eq:semipositive-2}
  u\in \mathscr{M}^{*}(A,J)/\mathrm{Aut}(\mathbb{C})\to u(\infty)\in W
\end{equation}
defines a pseudocycle of dimension $2n+2c_{1}(A)-4$ for every homology class $A$; see \cite[Chapter 6]{mcduffsalamon}.

Consider now the evaluation map:
\begin{equation}\label{eq:semipositive-1}
  (\mathscr{M}(\varphi_{t},J)\times \R\times \R/\Z)/\R\to W
\end{equation}
given by $(u,s,t)\mapsto u(s,t)$. We say that data $\varphi_{t}$ is \emph{admissible} for $J$ provided:
\begin{enumerate}
\item $\varphi_{1}$ is non-degenerate,
\item $\mathscr{M}(\varphi_{t},J)$ is cut transversally,
\item the evaluation map \eqref{eq:semipositive-1} is transverse to the pseudocycle defined by \eqref{eq:semipositive-2}.
\end{enumerate}

Let $\mathscr{M}_{d}(\varphi_{t},J)$ denote the $d$-dimensional component of the moduli space.

Define a differential on $\mathrm{CF}(\varphi_{t},J)$ by the formula:
\begin{equation}\label{eq:floer_differential}
  dx=\textstyle\sum_{y}n(y,x)y,
\end{equation}
where $y$ is required to have the induced capping, and:
\begin{equation*}
  n(y,x)=\#\set{u\in \mathscr{M}_{1}(\varphi_{t},J)/\R: y=u(-\infty,t)\text{ and }u(+\infty)=x}\text{ mod 2};
\end{equation*}
the quotient by $\R$ is with respect to retranslations. The sum defining $dx$ may be infinite; however, it is semi-infinite in the sense considered above, and hence is a well-defined element of the Floer complex.

As is usual in Floer theory, $d^{2}=0$ holds by considering the non-compact ends of the one-dimensional manifold $\mathscr{M}_{2}(\varphi_{t},J)/\R$; see, e.g., \cite[Theorem 4]{floer_cup}, \cite[Theorem 5.1]{hofer-salamon-95}. We briefly remark on a priori estimates needed in order for the $d^{2}=0$ argument to work. One uses bubbling analysis and the semipositivity assumption to ensure that $C^{1}$ gradient bounds follow from energy bounds. The details of the argument are given in the later sections \S\ref{sec:apriori_estimates} and \S\ref{sec:semipositivity}.

\subsubsection{Continuation maps}
\label{sec:continuation-maps}

Let $\varphi_{s,t}$ be a path of contact-at-infinity systems, satisfying (i) $\varphi_{s,0}=\id$, and (ii) $\bd_{s}\varphi_{s,t}=0$ for $s$ outside a compact interval $[s_{0},s_{1}]$. Denote by $X_{s,t}\circ \varphi_{s,t}=\bd_{t}\varphi_{s,t}$ the generating vector field.

Associated to this continuation data, define $\mathscr{M}(\varphi_{s,t},J)$ to be the moduli space of continuation cylinder $u$ solving:
\begin{equation*}
  \bd_{s}u+J(u)(\bd_{t}u-X_{s,t}(u))=0.
\end{equation*}
If $\varphi_{s,t}$ is \emph{non-positive at infinity}, in the sense that its normalized generator $H_{s,t}$ satisfies $\bd_{s}H_{s,t}\le 0$ outside of a compact set, then $\mathscr{M}(\varphi_{s,t},J)$ satisfies an a priori energy bound; the energy estimate is well-known, see \S\ref{sec:energy_estimates} for related discussion.

Let us say that $\varphi_{s,t}$ is \emph{admissible continuation data} provided:
\begin{enumerate}
\item it is non-positive at infinity, in the above sense,
\item the endpoints $\varphi_{\pm,t}$ are admissible for defining the Floer complex with $J$,
\item the moduli space $\mathscr{M}(\varphi_{s,t},J)$ is cut transversally.
\item the evaluation map:
  \begin{equation*}
    (u,s,t)\in \mathscr{M}(\varphi_{s,t},J)\mapsto u(s,t)\in W
  \end{equation*}
  is transverse to the simple $J$-spheres described in \eqref{eq:semipositive-2}.
\end{enumerate}

To such a path one associates a continuation map:
\begin{equation*}
  \mathfrak{c}:\mathrm{CF}(\varphi_{+,t},J)\to \mathrm{CF}(\varphi_{-,t},J),
\end{equation*}
by counting the rigid continuation cylinders, going from right-to-left as in \eqref{eq:floer_differential}.

Consideration of the $1$-dimensional component of $\mathscr{M}(\varphi_{s,t},J)$ proves $\mathfrak{c}$ is a chain map with respect to the Floer differentials $d_{\pm}$. The chain homotopy class of the map is unchanged under homotopies of continuation data with fixed endpoints, which are non-positive during the entire homotopy.

For details, see \cite[Theorem 4]{floer_cup}, \cite[Theorem 5.2]{hofer-salamon-95}, \cite[Lemma 6.13]{abouzaid_monograph}. See \cite{ritter_novikov}, \cite[\S2.2]{cant_sh_barcode} for discussion in the context of contact-at-infinity systems.

\subsubsection{Independence of the choice of almost complex structure}
\label{sec:indep-choice-almost}

For most of our arguments we use a fixed $\omega$-tame almost complex structure $J$; this simplifies notation while still enabling us to prove our main result. We note that the spectral norm is independent of the choice of $J$.
Indeed, for two choices of admissible complex structures $J,J'$, continuation isomorphisms can be defined between $\mathrm{CF}(\varphi_{t};J)\to \mathrm{CF}(\varphi_{t};J')$, in a way that preserves the min-max action value of the unit element.
This continuation argument is given in \cite[Theorem 5.2]{hofer-salamon-95} in the closed case. The convex-at-infinity setting does not complicate the argument; sharp energy estimates for the continuation cylinders are possible since the input and output systems coincide.

\subsubsection{PSS and the unit element}
\label{sec:pss-unit-element}

The goal in this section is to construct the ``unit element'' in $\mathrm{HF}(\varphi_{t})$ when the ideal restriction of $\varphi_{t}$ has a positive-at-infinity generating Hamiltonian. Morally, the unit element is defined by considering continuation cylinders from the identity to $\varphi_{t}$, as in Figure \ref{fig:continuation_cylinders_PSS}.

One picks a generic path $\varphi_{s,t}$ so that $\varphi_{s,t}=\varphi_{t}$ for $s\le s_{0}$ and $\varphi_{s,t}=\id$ for $s\ge s_{1}$, and so that $\bd_{s}H_{s,t}(x)\le 0$ holds for all $s$, for $x$ outside of a compact set. Associated to this is the moduli space $\mathscr{M}(\varphi_{s,t},J)$ of continuation cylinders. Each element of the moduli space has a removable singularities at the $s=+\infty$ end, and should be considered as map $u:\mathbb{C}\to W$ via the reparametrization $z=e^{-2\pi(s+it)}$.

The resulting PDE has a Fredholm linearization, and, under similar genericity conditions to avoid sphere bubbling, the count of rigid elements in $\mathscr{M}(\varphi_{s,t},J)$ defines a closed element in $1_{\varphi_{t}}\in \mathrm{HF}(\varphi_{t},J)$ called the \emph{unit}. The necessary compactness results follow from the same considerations as those in \S\ref{sec:continuation-maps}. The unit element does not depend on the particular choices (up to the addition of exact elements) for the same reason that the continuation map is well-defined up to chain homotopy.

Standard gluing arguments show that the unit elements are natural with respect to continuation maps.

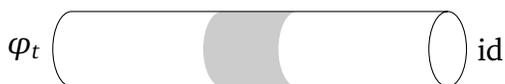
\begin{figure}[H]
  \centering
  \begin{tikzpicture}
    \path[fill=black!20!white] (3,1)--(2,1) arc (90:270:0.25 and 0.5)--(3,0) arc (-90:-270:0.25 and 0.5);
    \draw (5,1)--(0,1) arc (90:270:0.25 and 0.5) node[pos=0.5,left]{$\varphi_{t}$}coordinate(X)--(5,0) arc (-90:270:0.25 and 0.5) node[pos=0.25,right] {$\id$};
  \end{tikzpicture}
  \caption{The unit is defined via a special kind of continuation cylinder; the shaded region interpolates between $\varphi_{t}$ and $\id$.}
  \label{fig:continuation_cylinders_PSS}
\end{figure}

\subsection{pair-of-pants product}
\label{sec:pair-pants-product}

The pair-of-pants product between Floer cohomology groups in compact symplectic manifolds is well-known; see for instance \cite[\S3]{pss}, \cite[\S5.5.1.3]{schwarz-thesis}, and \cite[\S6]{seidel_representation}.

Fix the pair-of-pants $\Sigma$ to be $\mathrm{CP}^{1}$ with three punctures $0,1,\infty$; see Figure \ref{fig:pop-cp1}.
\begin{figure}[H]
  \centering
  \begin{tikzpicture}[scale=0.8]
    \path[decorate,decoration={
      markings,
      mark=between positions 0.1 and .9 step 0.2 with {\arrow{<[scale=1.3,line width=.8pt]};},
    }] (0,0) circle (0.35);
    \path[decorate,decoration={
      markings,
      mark=between positions 0.1 and .9 step 0.2 with {\arrow{<[scale=1.3,line width=.8pt]};},
    }] (1,0) circle (0.35);
    \path[decorate,decoration={
      markings,
      mark=between positions 0.05 and 1 step 0.1 with {\arrow{<[scale=1.3,line width=.8pt]};},
    }] (0,0) circle (2);
    \draw[line width=.8pt] (0,0) circle (2) (1,0) circle (0.35) node[inner sep=1pt,fill,circle]{} (0,0) circle (0.35) node[inner sep=1pt,fill,circle]{};
  \end{tikzpicture}
  \caption{pair-of-pants as $\mathrm{CP}^{1}$ with three punctures $0,1,\infty$. Circles around the punctures are oriented as the boundaries of cylindrical ends, i.e., $0,1$ are positive punctures and $\infty$ is a negative puncture.}
  \label{fig:pop-cp1}
\end{figure}
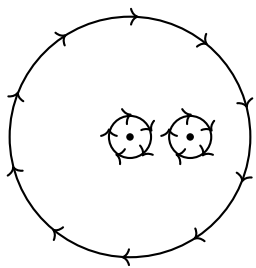

For any three systems $\varphi_{0,t},\varphi_{1,t}$ and $\varphi^{\infty}_{t}$, one can count elements of the moduli space $\mathscr{M}(\mathfrak{H},J)$ of solutions to Floer's equation where $\mathfrak{H}$ is a Hamiltonian connection over the pair-of-pants as in \S\ref{sec:floers-equat-hamilt}. See also \cite[\S8]{mcduffsalamon} for details on Hamiltonian connections and the associated moduli space $\mathscr{M}(\mathfrak{H},J)$.

The asymptotic form of $\mathfrak{H},J$ in cylindrical ends around the punctures is determined by the three systems $\varphi_{0,t},\varphi_{1,t}$ and $\varphi^{\infty}_{t}$ in such a way that solutions $u\in \mathscr{M}(\mathfrak{H},J)$ satisfy the usual Floer's equation for the $i$th system in the $i$th cylindrical end.

If $W$ is compact and semipositive, and $\mathfrak{H}$ is sufficiently generic, then the moduli space $\mathscr{M}(\mathfrak{H},J)$ has a compact zero dimensional component; the count of points in this component determines a map from $\mathrm{HF}(\varphi_{0,t})\otimes \mathrm{HF}(\varphi_{1,t})$ to $\mathrm{HF}(\varphi^{\infty}_{t})$; see \S\ref{sec:defin-pair-pants}. As explained in greater detail in \S\ref{sec:unit-times-unit}, this product sends $1_{\varphi_{0}}\otimes 1_{\varphi_{1}}$ to $1_{\varphi_{\infty}}$, i.e., it respects the unit elements coming from the $\mathrm{PSS}$ map.

The analogous product in convex-at-infinity manifolds is complicated by the need to ensure a maximum principle holds; see \cite[\S6]{ritter_tqft}, \cite{ritter_negative_line_bundles,ritter_circle_actions}, \cite[\S10.3]{abouzaid_monograph}. As explained in \S\ref{sec:maximum_principle}, a maximum principle follows from an energy bound. The results in \S\ref{sec:energy-ident-floers} prove that the energy of $u\in \mathscr{M}(\mathfrak{H},J)$ is given by a formula of the form:
\begin{equation}\label{eq:energy-ident-pop} \text{energy}(u)=\mathscr{A}_{\varphi^{\infty}_{t}}(\gamma_{\infty})-\mathscr{A}_{\varphi_{1,t}}(\gamma_{1})-\mathscr{A}_{\varphi_{0,t}}(\gamma_{0})+\int u^{*}\mathfrak{r},
\end{equation}
where $\gamma_{i}$ are the asymptotic orbits and $\mathfrak{r}$ is a ``curvature'' two-form on $\Sigma\times W$ derived from the connection $\mathfrak{H}$. In the convex-at-infinity case, it is generally impossible to bound the curvature term without suitable assumptions on the systems involved.

The strategy employed by this paper is to only consider connections $\mathfrak{H}$ which are \emph{flat}, i.e., which satisfy $\mathfrak{r}=0$. The upshot of this is that one obtains a priori energy bounds for $\mathscr{M}(\mathfrak{H},J)$ in terms of the actions of the asymptotics; see \S\ref{sec:energy_estimates} for more details.

The cost of using a flat connection is that the systems cannot be picked arbitrarily. In \S\ref{sec:locally-trivial-flat}, we explain how for any choice of contact-at-infinity systems $\varphi_{0,t},\varphi_{1,t}$, there is a flat connection $\mathfrak{H}$ on the pair-of-pants with $\varphi^{\infty}_{t}=\varphi_{0,t}\varphi_{1,t}$. In other words, if the output system is the composite of the input systems, the resulting pair-of-pants operation satisfies the energy estimate \eqref{eq:energy-ident-pop} with $\mathfrak{r}=0$; see \S\ref{sec:defin-pair-pants} for the precise statement.

Similar use of flat connections on pairs-of-pants is employed in \cite[\S4.1]{schwarz_spectral_invariants} to prove his spectral norm is sub-additive; see \S\ref{sec:sub-addit-spectr}. The pair-of-pants constructed in \cite[\S6]{ritter_tqft}, for autonomous $\varphi_{0,t}=\varphi_{1,t}$, also uses a flat Hamiltonian connection.

The paper \cite{kislev_shelukhin} introduces a general construction of flat connections on Riemann surfaces $\Sigma$ by conformally embedding strips $\R\times [0,1]$ into $\Sigma$.

As shown in Figure \ref{fig:ks-pop}, one embeds strips and requires that Floer's equation appears in the standard form on each strip; outside the strips one requires solutions are holomorphic (i.e., no Hamiltonian term). In cylindrical ends near each puncture, the conformally embedded strips are supposed to converge to strips $\R\times [t_{0},t_{1}]$ where $[t_{0},t_{1}]$ is some sub-interval of $\R/\Z$.

\begin{figure}[H]
  \centering
  \begin{tikzpicture}
    \begin{scope}[rotate=-90]
      \draw (0,2) circle (0.5 and 0.1) coordinate (X1) (2,2) circle (0.5 and 0.1) coordinate(X2) (1,-2) circle (0.5 and 0.1) coordinate(X3);
      \path (X1)+(0.5,0)coordinate(X1p)--+(-0.5,0)coordinate(X1m)--(X3)--+(0.5,0)coordinate(X3p)--+(-0.5,0)coordinate(X3m)--(X2)--+(0.5,0)coordinate(X2p)--+(-0.5,0)coordinate(X2m);
      \draw (X1p)to[out=-90,in=-90,looseness=3](X2m) (X2p)to[out=-90,in=90](X3p) (X3m)to[out=90,in=-90](X1m);
      \path (X1)+(0,.1)node[right]{input $\varphi_{0,\beta_{0}(t)}$} (X2)+(0,0.1)node[right]{input $\varphi_{0,\beta_{1}(t)}$} (X3)+(0,-0.1)node[left]{concatenated system};
      \path (X1)+(-100:0.5 and 0.1)coordinate(A1) (X3)+(130:0.5 and 0.1)coordinate(A3);
      \path (X2)+(-50:0.5 and 0.1)coordinate(B1) (X3)+(80:0.5 and 0.1)coordinate(B3);
      \draw[line width=1pt,red,fill=white!70!red] (A1)arc(-100:-130:0.5 and 0.1)to[out=-90,in=90](A3)arc(130:100:0.5 and 0.1)to[out=90,in=-90](A1);
      \draw[line width=1pt,blue,fill=white!70!blue] (B1)arc(-50:-80:0.5 and 0.1)to[out=-90,in=90](B3)arc(80:50:0.5 and 0.1)to[out=90,in=-90](B1);
    \end{scope}
  \end{tikzpicture}
  \caption{Zero curvature connections on the pair-of-pants via the strip technique of \cite{kislev_shelukhin}. Here $\beta_{i}:[0,1]\to [0,1]$ is a non-decreasing surjective smooth function which is supported in a small sub-interval.}
  \label{fig:ks-pop}
\end{figure}
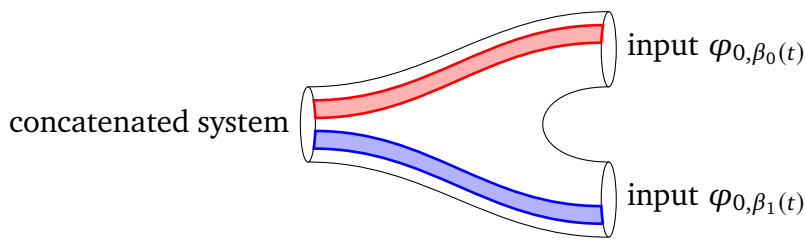

This construction can be encoded into the data $(\mathfrak{H},J)$ and $\mathfrak{H}$ is easily seen to be a flat Hamiltonian connection, by working in small enough coordinate charts where the equation appears in the above form.

The solutions to the PDE described in Figure \ref{fig:ks-pop} asymptotically satisfy Floer's equation for time-reparametrized systems near the $0$ and $1$ puncture, and satisfy Floer's equation for the appropriate concatenated system at the $\infty$ puncture.

Throughout our arguments, it is possible to use the construction of \cite{kislev_shelukhin} to produce flat Hamiltonian connections with desired asymptotic systems, alternatively, one can use the methods in \S\ref{sec:famil-flat-conn}.

\subsubsection{Definition of the pair-of-pants product}
\label{sec:defin-pair-pants}

Let $\varphi_{0,t},\varphi_{1,t},\varphi^{\infty}_{t}=\varphi_{0,t}\varphi_{1,t}$ be contact-at-infinity systems with non-degenerate fixed points. The case relevant to our paper is when the ideal restriction of $\varphi_{t}^{i}$ is $R^{\alpha}_{\epsilon t}$ and $\epsilon>0$ is smaller than the minimal $\alpha$-Reeb orbit. The systems need to be generic on the compact part of $W$ so that the Floer complexes $\mathrm{CF}(\varphi^{i}_{t})$ are well-defined; see \S\ref{sec:floer-differential}.

Let $\mathfrak{H}$ be a flat Hamiltonian connection on the pair-of-pants $\Sigma$ whose monodromies are given by $\varphi^{i}_{t}$. Abbreviate by $\mathfrak{H}+\delta$ a compactly supported perturbation of $\mathfrak{H}$.

One requires $\mathfrak{H}$ is compatible with a choice of cylindrical ends on the pair-of-pants, so that $u\in \mathscr{M}(\mathfrak{H}+\delta,J)$ solves the standard Floer's equation in the $i$th end: $$\bd_{s}u+J(u)(\bd_{t}u-X_{t}^{i}(u))=0,$$ where $X_{t}^{i}$ is the generator of $\varphi^{i}_{t}$; see \S\ref{sec:cylindrical-ends-1} for further discussion.

Associated to this data is the moduli space $\mathscr{M}(\mathfrak{H}+\delta,J)$ of finite-energy solutions to Floer's equation on $\Sigma$. We require that $\delta$ is chosen generically so that this is cut transversally and the natural evaluation map: $$\mathrm{ev}:\mathscr{M}(\mathfrak{H}+\delta,J)\to W$$ is transverse to the pseudocycle of simple $J$-holomorphic spheres; see \S\ref{sec:semipositivity}.

Similarly to the definition of the continuation map, one counts the rigid elements in $\mathscr{M}(\mathfrak{H}+\delta,J)$ as defining a map $\mathrm{HF}(\varphi_{0,t})\otimes \mathrm{HF}(\varphi_{1,t})\to \mathrm{HF}(\varphi_{\infty,t})$. Note that cappings of the asymptotic orbits at $0$ and $1$ induce a capping of the orbit at $\infty$.

The energy estimate \S\ref{eq:energy-ident-pop} implies that: $$\mathscr{A}_{\varphi_{\infty}}(\gamma_{\infty})\ge \mathscr{A}_{\varphi_{0}}(\gamma_{0})+\mathscr{A}_{\varphi_{1}}(\gamma_{1}),$$ and standard compactness results imply the sums converge (recalling that we allow semi-infinite sums). One shows via $1$-dimensional parametric moduli spaces that the resulting map is independent of the choice of perturbation $\delta$.

\subsubsection{The product of the unit with itself is the unit}
\label{sec:unit-times-unit}

The goal in this section is to prove that the unit elements constructed in \S\ref{sec:pss-unit-element} are compatible with the pair-of-pants product. This result is well-known, at least in the compact case, and we simply sketch the argument.

Perform monotone cut-offs of a flat Hamiltonian connection on the pair-of-pants in the cylindrical ends of $0$ and $1$, but do not cut off at the $\infty$ puncture. This depends on a cut-off parameter $R$, producing a connection $\mathfrak{H}(R)$ with non-zero curvature. The curvature is negative in a certain sense (this negativity would not hold if we cut off at the $\infty$ puncture). Most importantly, solutions to $\mathscr{M}(\mathfrak{H}(R),J)$ still satisfy a priori energy bounds; see \S\ref{sec:energy_estimates}.

The count of rigid elements of $\mathscr{M}(\mathfrak{H}(R),J)$ is considered as valued in $\mathrm{CF}(\varphi^{\infty}_{t})$. Standard arguments (similar to those in the construction of the unit element) show that this count defines a closed element. Deformation arguments using the parameteric construction in \S\ref{sec:famil-cylindr-ends} show that the cohomology class of this element is independent of $\varphi_{0,t},\varphi_{1,t}$, provided that $\varphi^{\infty}_{t}=\varphi_{0,t}\varphi_{1,t}$, (and also independent of $R$). Thus, we may suppose that $\varphi_{0,t}=\varphi^{\infty}_{t}$ and $\varphi_{1,t}=\id$. In this case the connection $\mathfrak{H}$ extends smoothly over the $1$ puncture. Considering $\mathrm{CP}^{1}\setminus\{0,\infty\}$ as the infinite cylinder, one shows that counting elements in $\mathscr{M}(\mathfrak{H}(R),J)$ is equivalent to counting the rigid elements of the continuation cylinders used to define the unit in \S\ref{sec:pss-unit-element}. Thus the count equals the unit.

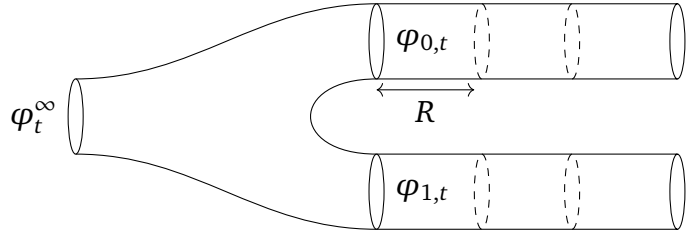
\begin{figure}[H]
  \centering
  \begin{tikzpicture}[rotate=-90]
    \draw (0,2) circle (0.5 and 0.1) coordinate (X1) (2,2) circle (0.5 and 0.1) coordinate(X2) (1,-2) circle (0.5 and 0.1) coordinate(X3);
    \path (X1)+(0.5,0)coordinate(X1p)--+(-0.5,0)coordinate(X1m)--(X3)--+(0.5,0)coordinate(X3p)--+(-0.5,0)coordinate(X3m)--(X2)--+(0.5,0)coordinate(X2p)--+(-0.5,0)coordinate(X2m);
    \draw (X1p)to[out=-90,in=-90,looseness=3](X2m) (X2p)to[out=-90,in=90](X3p) (X3m)to[out=90,in=-90](X1m);
    \path (X1)+(0,.1)node[right]{$\varphi_{0,t}$} (X2)+(0,0.1)node[right]{$\varphi_{1,t}$} (X3)+(0,-0.1)node[left]{$\varphi^{\infty}_{t}$};

    \draw (X1m)--coordinate[pos=0.35](A1)+(0,+4)arc(-180:180:0.5 and 0.1)coordinate[pos=0.5](tmp) (tmp)--coordinate[pos=0.35](B2)(X1p);
    \draw[dashed] (A1) arc (180:-180:0.5 and 0.1) (B2) arc (0:360:0.5 and 0.1);
    \path (B2)--node[pos=0.5](B2){}(X1p)node(A2){};
    \draw[<->] (B2.south)--node[below]{$R$}(A2.south);

    \draw (X2m)--coordinate[pos=0.35](A1)+(0,+4)arc(-180:180:0.5 and 0.1)coordinate[pos=0.5](tmp) (tmp)--coordinate[pos=0.35](B2)(X2p);
    \draw[dashed] (A1) arc (180:-180:0.5 and 0.1) (B2) arc (0:360:0.5 and 0.1);
  \end{tikzpicture}
  \caption{Cut-off of a flat connection on the pair-of-pants. One cuts off from $\varphi_{0,t}$ to $\id$ on an interval of fixed length, starting at a depth depending on the parameter $R$.}
  \label{fig:cut-off-flat}
\end{figure}

Now take the limit $R\to\infty$ for the original systems $\varphi_{0,t},\varphi_{1,t}$ and show that the possible breakings are configurations of a rigid pair-of-pants, with two cappings coming from the moduli spaces for the unit elements of $\varphi_{0,t}$ and $\varphi_{1,t}$. The desired result then follows, since the limiting configurations are precisely those which say that the pair-of-pants product of the two unit elements equals the unit element.

\subsubsection{Sub-additivity of spectral norm}
\label{sec:sub-addit-spectr}

Suppose that $\phi_{0,t},\phi_{1,t}$ are compactly supported Hamiltonian systems on $W$. The results in \S\ref{sec:unit-times-unit} together with the energy identity for pairs-of-pants imply that $c(1,\phi_{0,t}\circ \phi_{1,t})\ge c(1,\phi_{0,t})+c(1,\phi_{1,t})$. The spectral invariant is super-additive because we are using cohomological conventions; see \S\ref{sec:HFC-intro}.

A similar identity holds for the inverse systems, and we conclude the spectral norm from \eqref{eq:defin-spectral-norm} is sub-additive $\gamma(\phi_{0,t}\circ \phi_{1,t})\le \gamma(\phi_{0,t})+\gamma(\phi_{1,t})$.

\subsection{Deforming the pair-of-pants product using a Lagrangian}
\label{sec:deforming-pair-pants}

In this section we explain how to deform the pair-of-pants product using a compact Lagrangian. To begin, we consider the following operation defined by counting half-infinite cylinders.

If $\varphi_{\infty,t}$ is a small perturbation of the identity, with a small positive slope, then one can count half-infinite cylinders $[0,\infty)\times \R/\Z$ with Lagrangian boundary conditions solving Floer's equation for $\varphi_{\infty,t}$. This defines an augmentation type map $\mathrm{HF}(\varphi_{\infty,t})\to \Lambda.$

Here, the Novikov ring $\Lambda$ is the $\F_2$-algebra consisting of all semi-infinite sums $\sum x_{k}e^{A_{k}}$ where $A_{k}$ are areas of elements of $\pi_{2}(W,L)$; the sums are semi-infinite in the sense that only finitely many $k$ have $\omega(A_{k})\le \lambda$. We are primarily interested in the one-dimensional subspaces $\Lambda_{A}=\F_2\cdot e^{A}$ corresponding to the disk of area $A$. By construction, the subspaces: $$\Lambda_{< A}=\textstyle\bigoplus_{A'< A} \Lambda_{A'}$$ define a filtration of $\Lambda$.

The energy of a solution $u$ to the equation defined in Figure \ref{fig:augmentation} is given as:
\begin{equation}\label{eq:energy-augmentation}
  E(u)=\int_{0}^{1} H_{t}(u(0,t))\d t-\mathscr{A}_{\varphi_{\infty}}(\gamma_{\infty})+A,
\end{equation}
where $A$ is area of the element of $\pi_{2}(W,L)$ formed by gluing the cylinder to the chosen capping of $\gamma_{\infty}$. To obtain the relevant energy bound, we only define:
\begin{equation}\label{eq:augmentation}
  \mathfrak{a}:\mathrm{HF}_{>c+A-\hbar}(\varphi_{\infty,t})\to \Lambda_{<A},
\end{equation}
where $c$ is a small number defined precisely in \eqref{eq:defin-c-constant}, and $E(u)<\hbar$ is sufficient to preclude disk bubbling. To be precise, in this section we fix $\hbar$ so that:
\begin{equation}\label{eq:defn-hbar}
  \gamma(\phi_{t})<\hbar<\hbar(L),
\end{equation}
where $\hbar(L)$ is defined in \eqref{eq:hbar-constant}. Briefly, one only counts the solutions to Figure \ref{fig:augmentation} where the symplectic area of the disk formed is less than $A$ to define \eqref{eq:augmentation}; see \S\ref{sec:augm-assoc-lagr} for further details.

A special role is played by $\Lambda_{0}$, as it encodes the area of the contractible disk. Indeed, in \S\ref{sec:augm-assoc-lagr}, it is shown that the augmentation $\mathrm{HF}_{>c-\hbar}(\varphi_{\infty,t})\to \Lambda_{0}$ sends the unit element to a non-zero element, provided $\varphi_{\infty,t}$ is a sufficiently small perturbation of the identity.

\begin{figure}[H]
  \centering
  \begin{tikzpicture}
    \path (0,0) circle (0.1 and 0.5) coordinate(X) (8,0) circle (0.1 and 0.5) coordinate(Y);
    \draw (X)+(0,0.5)coordinate(X1p)arc(90:450:0.1 and 0.5)coordinate[pos=0.5](X1m) coordinate[pos=0.75](X1z);
    \draw[dashed] (Y)+(0,0.5)coordinate(Y1p)arc(90:450:0.1 and 0.5)coordinate[pos=0.5](Y1m);
    \draw (X1p)--(Y1p) (X1m)--(Y1m);
    \path (X1p)--(Y1m)node[pos=0.5]{$\bd_{s}u-J(u)(\bd_{t}u-X_{t})=0$};
    \node at (X.west)[shift={(-0.1,0)},left]{$L$};
    \node at (Y.east)[shift={(0.1,0)},right]{$\gamma_{\infty}$};
    \node[draw,inner sep=1pt,circle,fill=black] (X) at (X1z){};
    \node at(X)[right]{$\mathrm{pt}$};
  \end{tikzpicture}
  \caption{The augmentation $\mathrm{HF}(\varphi_{\infty,t})\to \Lambda$ associated to a Lagrangian $L$ with a point constraint $u(0,0)=\mathrm{pt}$.}
  \label{fig:augmentation}
\end{figure}

Now the pair-of-pants product enters the discussion.

Let $\varphi_{0,t}=\delta_{0,t}\circ R_{\epsilon t}\circ \phi_{t}^{-1}$ and $\varphi_{1,t}=\phi_{t}\circ R_{\epsilon t}\circ \delta_{1,t}$ be small perturbations of a compactly supported Hamiltonian isotopy $\phi_{t}$ and its inverse, as in \S\ref{sec:HFC-intro}, and let $\varphi_{\infty,t}=\varphi_{0,t}\varphi_{1,t}$. Then $\varphi_{\infty,t}$ is a small enough perturbation of the identity so that the augmentation $\mathrm{HF}_{>c-\hbar}(\varphi_{\infty,t})\to \Lambda_{0}$ sends the unit element to a non-zero element.
The idea is to precompose this augmentation with the pair-of-pants product: $$\mathrm{HF}_{>a}(\varphi_{0,t})\otimes \mathrm{HF}_{>b}(\varphi_{1,t}) \to \mathrm{HF}_{>c-\hbar}(\varphi_{\infty,t})$$ for suitable negative constants $a,b$; see \S\ref{sec:relev-spectr-norm} for discussion of the precise choice of constants $a,b$ and the relevance of the spectral norm.

Gluing the pair-of-pants to the half-infinite cylinder provides a chain level description for the operation:
\begin{equation}\label{eq:bi-augmentation}
  \mathrm{HF}_{>a}(\varphi_{0,t})\otimes \mathrm{HF}_{>b}(\varphi_{1,t})\to \Lambda_{0}.
\end{equation}
The operation \eqref{eq:bi-augmentation} can be defined in many ways, as explained in \S\ref{sec:prod-oper-with}.
The idea is to use the gluing trick to show the operation is non-zero (by considering it as a composition), but then deform the PDE in the manner described in Figure \ref{fig:deforming-pop}.
Via a sufficiently large deformation, one concludes a sequence of Floer strips with large modulus on $L$ and point constraints as shown in Figure \ref{fig:deforming-pop-2}; see \S\ref{sec:floer-strips-with}.
As explained in \S\ref{sec:overview}, one can conclude multiple chords with endpoints on $L$, with varying actions, thereby completing the proof of Theorem \ref{thm:main}.

The rest of this section is concerned with the technical details of the argument.

\subsubsection{Augmentation associated to a Lagrangian with a point constraint}
\label{sec:augm-assoc-lagr}

In this section we are concerned with the construction of the augmentation described above, in the case when:
\begin{equation*}
  \varphi_{\infty,t}:=\delta_{0,t}R_{2\epsilon t}^{\alpha}\delta_{1,t},
\end{equation*}
where $\delta_{i,t}$ are $C^{\infty}$ small perturbations and $\epsilon$ is a small enough positive number.

Let $\Omega(r_{0})=\set{r\le r_{0}}$ and $R^{\alpha}_{s}$, be as in \S\ref{sec:reeb-flow-convex-end}. Suppose that $r_{0}$ is large enough that $L$ is contained in the interior of $\Omega(r_{0})$.
We also assume that the perturbations $\delta_{i,t}$ are such that the normalized Hamiltonians $\Delta_{i,t}$ which generate $\delta_{i,t}$ vanish outside of $\Omega(r_{0}+1)$; this is sufficient to ensure transversality of all non-constant Floer cylinders; see \S\ref{sec:cont-spectr-invar} for further discussion.

The normalized Hamiltonian generating $\varphi_{\infty,t}$ is given by:
\begin{equation*}
  H_{t}:=\Delta_{0,t}+\Delta_{1,t}\circ R^{\alpha}_{-2\epsilon t}\delta_{0,t}^{-1}+2\epsilon f(r\circ \delta_{0,t}^{-1}-r_{0})+2\epsilon r_{0};
\end{equation*}
see \S\ref{sec:estim-error-term} for further details. By picking $\epsilon$ and $\Delta_{0,t},\Delta_{1,t}$ sufficiently small we may assume that:
\begin{equation}\label{eq:defin-c-constant}
  \int_{0}^{1}\max_{x\in \Omega(r_{0}+1)} H_{t}(x)\d t=:c<\hbar,
\end{equation}
As explained above, this is sufficient to obtain an a priori energy bound for defining the augmentation $\mathrm{HF}_{>c-\hbar}(\varphi_{\infty,t})\to \Lambda_{0}$.

The augmentation map $\mathfrak{a}:\mathrm{HF}_{>c-\hbar}(\varphi_{\infty,t})\to \Lambda_{0}$ is defined on capped orbits by the formula:
\begin{equation*}
  \mathfrak{a}(\gamma_{\infty})=\sum n(\gamma_{\infty},A)e^{A},
\end{equation*}
where $n(\gamma_{\infty},A)$ is the number of rigid half-infinite cylinders $u$ as in Figure \ref{fig:augmentation} such that $u\#\bar{\gamma}_{\infty}$ has symplectic area $A\le 0$, where $\bar{\gamma}_{\infty}$ denotes the capping. The resulting Floer energy of the cylinder is given by \eqref{eq:energy-augmentation} which is bounded by $\hbar$. Standard arguments imply that this is a chain map, where $\Lambda_{0}$ is considered as a complex with zero differential.

To prove that the unit element is sent to $1\in \Lambda_{0}$, we first need to show that the unit element $1_{\varphi_{\infty,t}}$ is represented in the $\mathrm{HF}_{>c-\hbar}(\varphi_{\infty,t})$ for $\Delta_{i,t}$ and $\epsilon$ sufficiently small. One shows this by analyzing the action of the output of the PSS map.

More precisely, we claim that, for the fixed negative error $c-\hbar$, we can pick $\Delta_{i,t},\epsilon$ sufficiently small so that the unit element $1_{\varphi_{\infty,t}}$ has action value greater than $c-\hbar$. Indeed, on chain level, $1_{\varphi_{\infty,t}}$ is defined by counting the solutions to the PSS equation described in \S\ref{sec:pss-unit-element}; one can therefore estimate the minimal action of the output:
\begin{equation*}
\text{min action}\ge \mathrm{min}_{W\times \R/\Z}H_{t}+\text{symplectic area of the PSS cylinder in Figure \ref{fig:continuation_cylinders_PSS}}.
\end{equation*}
The first term on the right converges to zero. If the sum is not greater than $c-\hbar<0$, then the symplectic area must be uniformly negative. Then we eventually contradict the non-negativity of the energy of the PSS cylinder, because:
\begin{equation*}
  0\le \text{energy}\le \int_{\gamma_{-}}H_{t}\d t+\int_{\R}\max_{W}\bd_{s}H_{s,t}+\text{symplectic area},
\end{equation*}
and the first two terms cannot be uniformly positive (as $\Delta_{i,t}$ and $\epsilon$ converge to zero), and so the last term cannot be uniformly negative.

Having established that $1_{\varphi_{\infty,t}}$ lives in the appropriate action window, it remains to show that $\mathfrak{a}$ sends $1_{\varphi_{\infty,t}}$ to $1\in \Lambda_{0}$. To show this, one considers the moduli space $\mathscr{M}(\sigma,\varphi_{\infty,t})$ of cylinders described in Figure \ref{fig:interp_augmentation}.

\begin{figure}[H]
  \centering
  \begin{tikzpicture}
    \path (0,0) circle (0.1 and 0.5) coordinate(X) (12,0) circle (0.1 and 0.5) coordinate(Y);
    \draw (X)+(0,0.5)coordinate(X1p)arc(90:450:0.1 and 0.5)coordinate[pos=0.5](X1m) coordinate[pos=0.75](X1z);
    \draw[dashed] (Y)+(0,0.5)coordinate(Y1p)arc(90:450:0.1 and 0.5)coordinate[pos=0.5](Y1m);
    \draw (X1p)--(Y1p) (X1m)--(Y1m);
    \path (X1p)--(Y1m)node[pos=0.5]{$\bd_{s}u-J(u)(\bd_{t}u-\beta(\sigma-s)X_{t})=0$};
    \node at (X.west)[shift={(-0.1,0)},left]{$L$};
    \node at (Y.east)[shift={(0.1,0)},right]{$\gamma_{\infty}$};
    \node[draw,inner sep=1pt,circle,fill=black] (X) at (X1z){};
    \node at(X)[right]{$\mathrm{pt}$};
  \end{tikzpicture}
  \caption{Here $\beta(x)=1$ for $x\ge 1$ and $\beta(x)=0$ for $x\le 0$. The cylinders interpolate between the augmentation and the PSS equation from Figure \ref{fig:continuation_cylinders_PSS}. Taking a limit $\sigma\to\infty$ shows that the augmentation sends the unit to $1\in \Lambda_{0}$.}
  \label{fig:interp_augmentation}
\end{figure}

Combining the energy estimate for the augmentation cylinders with the energy estimate for the PSS cylinders shows that, if $\Delta_{i,t}$ and $\epsilon$ are small enough, then the Floer energy of the cylinders in Figure \ref{fig:interp_augmentation} is bounded above by $\hbar+A$, here $A$ is the symplectic area of the $u$ (considered as a disk). The different symplectic areas split $\mathscr{M}(\sigma,\varphi_{\infty,t})$ into connected components.

The augmentation to $\Lambda_{0}$ only considers the components where $A=0$, and so we restrict our attention to this part of the moduli space. Then, if $\sigma$ is suficiently negative, the only elements of $\mathscr{M}(\sigma,\varphi_{\infty,t})$ are the constant $J$-holomorphic disks. Arguing similarly to \cite[\S9.2]{mcduffsalamon}, one thinks of the union of the index $0$ components $\mathscr{M}_{\mathrm{para}}=\bigcup_{\sigma}\mathscr{M}(\sigma,\varphi_{\infty,t})$ as a parametric moduli space; since there is a unique constant $J$-holomorphic disk satisfying the point constraint, the fibers over $\sigma<0$ consists of a single point. It follows that, for generic compactly-supported perturbations of the parametric equation, $\mathscr{M}_{\mathrm{para}}\to \R$ is smooth map between one-manifolds, and the fibers over generic $\sigma>0$ represent the non-trivial class in the unoriented bordism group of $0$-dimensional manifolds; in other words, the $\F_2$-valued counts of the zero symplectic area component of $\mathscr{M}(\sigma_{k},\varphi_{\infty,t})$ is exactly $1$ for generic $\sigma$.

Taking the limit $\sigma\to\infty$, and applying standard compactness and gluing arguments, one concludes that $\mathfrak{a}$ takes the unit element in $\mathrm{HF}_{>c-\hbar}(\varphi_{\infty,t})$ to $1$.

\subsubsection{Relevance of the spectral norm condition}
\label{sec:relev-spectr-norm}
The pair-of-pants operation defined in \S\ref{sec:pair-pants-product} respects the action filtrations and induces a morphism:
\begin{equation}\label{eq:filtered-pop}
  \mathrm{HF}_{>a}(\varphi_{0,t})\otimes \mathrm{HF}_{>b}(\varphi_{1,t})\to \mathrm{HF}_{>a+b-\rho_{1}}(\varphi_{\infty,t});
\end{equation}
the key is the energy estimate $E\le \mathscr{A}(\gamma_{\infty})-\mathscr{A}(\gamma_{0})-\mathscr{A}(\gamma_{1})+\rho_{1}$, where $\rho_{1}>0$ is a small error term due to perturbing a flat connection; see \S\ref{sec:energy_estimates}.

Thus, if $1_{\varphi_{0,t}}\in \mathrm{HF}_{>a}(\varphi_{0,t})$ and $1_{\varphi_{1,t}}\in \mathrm{HF}_{>b}(\varphi_{1,t})$, and $a+b-\rho_{1}>c-\hbar$, composing \eqref{eq:filtered-pop} with the augmentation $\mathfrak{a}:\mathrm{HF}_{>c-\hbar}(\varphi_{\infty,t})\to \Lambda_{0}$ yields a \emph{non-trivial} product:
\begin{equation}\label{eq:non-trivial-product}
  \mathrm{HF}_{>a}(\varphi_{0,t})\otimes \mathrm{HF}_{>b}(\varphi_{1,t})\to \Lambda_{0}
\end{equation}
sending $1_{\varphi_{0,t}}\otimes 1_{\varphi_{1,t}}$ to $1\in \Lambda_{0}$. See Figure \ref{fig:deforming-pop} for an illustration of this composition. Note that the product of the units is sent to the unit, \S\ref{sec:unit-times-unit}, but in order to preclude disk bubbling, one has to restrict the action windows, thus one needs to show that the unit in the target is in the image of the appropriate filtered complex in the domain.

If $\varphi_{0,t}=\delta_{0,t}\circ R_{\epsilon t}\circ \phi^{-1}_{t}$ and $\varphi_{1,t}=\phi_{t}\circ R_{\epsilon t}\circ \delta_{1,t}$, we can pick the perturbation terms small enough that:
\begin{equation*}
  \mathscr{A}(1_{\varphi_{0,t}})+\mathscr{A}(1_{\varphi_{1,t}})\ge c(1,\phi_{t}^{-1})+c(1,\phi_{t})-\rho_{2}=-\gamma(\phi_{t})-\rho_{2},
\end{equation*}
for an arbitrarily small $\rho_{2}>0;$ this approximation follows from the results in \S\ref{sec:cont-spectr-invar}.

Since $\gamma(\phi_{t})<\hbar$, we can pick the perturbations so that $c+\rho_{1}+\rho_{2}<\hbar-\gamma(\phi_{t})$, recalling that $c$ is defined by \eqref{eq:defin-c-constant}. Then $\mathscr{A}(1_{\varphi_{0},t})+\mathscr{A}(1_{\varphi_{1,t}})>c-\hbar$ and so \eqref{eq:non-trivial-product} is non-trivial.

\subsubsection{Product operation with multiple point constraints}
\label{sec:prod-oper-with}

We continue with the set-up of \S\ref{sec:relev-spectr-norm}, and interpret the non-trivial operation \eqref{eq:non-trivial-product} as a count of solutions to Floer's equation associated to a Hamiltonian connection over a twice-punctured disk; see Figure \ref{fig:deforming-pop} and \ref{fig:deforming-pop-2}.

Consider the closed unit disk $\Sigma_{0}$ with holomorphic coordinate $x+iy$ with two punctures at $x=\pm 0.5$. By the results in \S\ref{sec:famil-flat-conn}, on $\Sigma_{0}\times W\to \Sigma_{0}$ there is a flat Hamiltonian connection $\mathfrak{H}_{0}$ whose monodromies around $-0.5$ and $+0.5$ are $\varphi_{0,t}$ and $\varphi_{1,t}$, respectively.

Moreover, as explained in \S\ref{sec:locally-trivial-flat}, we can ensure that all monodromy maps are valued in the subgroup $\mathrm{Ham}(W;R^{\alpha},r_{0}+1)$ of those diffeomorphisms which agree with the Reeb flow outside of $\Omega(r_{0}+1)$.

Let $i_{\sigma}:D\to D$, $\sigma\in \R$, be an isotopy so that $i_{\sigma}(\pm 0.5)=\pm(0.5+\pi^{-1}\arctan(\sigma))$; in words, $i_{\sigma}$ expands the distance between the punctures as $\sigma$ increases. Then let $\Sigma_{\sigma}=i_{\sigma}(\Sigma_{0})$ and let $\mathfrak{H}^{\mathrm{tmp}}_{\sigma}=i_{\sigma,*}\mathfrak{H}$; we consider $\Sigma_{\sigma}$ as having the standard complex structure. One can think of $(\Sigma_{\sigma},\mathfrak{H}_{\sigma}^{\mathrm{tmp}},j)$ as being isomorphic to $(\Sigma_{0},\mathfrak{H}_{0},i_{\sigma}^{*}j)$.

As we want $\mathfrak{H}_{\sigma}$ to appear in a standard form on certain subsets, we will correct $\mathfrak{H}^{\mathrm{tmp}}_{\sigma}$ by fiberwise Hamiltonian diffeomorphisms of $\Sigma_{\sigma}\times W$, using the coordinate transformations introduced in \S\ref{sec:coordinate-changes-hamilt-conn}, \S\ref{sec:conj-monodr-coord}, \S\ref{sec:cylindrical-ends}, and \S\ref{sec:coord-changes-energy}. We require the following:
\begin{enumerate}
\item\label{item:sc-1} As $\sigma\to -\infty$, the connection equals the one obtained by gluing a pair-of-pants to an augmentation half cylinder, as shown on the left hand side of Figure \ref{fig:deforming-pop}.
\item\label{item:sc-2} As $\sigma\to\infty$, there is a conformally embedded neck of large modulus $N(\sigma)\subset \Sigma_{\sigma}$, as proved in \S\ref{sec:conf-embedd-strip-disk}, and we require that the connection appears in the standard form for Floer's equation on the strip for the system $\varphi_{1,t}$.
\item\label{item:sc-3} The energy estimates $E(u) \leq c-\mathscr{A}_{\varphi_{0,t}}(\gamma_{0})-\mathscr{A}_{\varphi_{1,t}}(\gamma_{1})+A$ holds for solutions $u$ with asymptotics $\gamma_{0},\gamma_{1}$, so that the disk formed has symplectic area $A$, and where $c$ is given in \eqref{eq:defin-c-constant}.
\end{enumerate}

The first property is straightforward to achieve using the connectivity of the space of flat connections with fixed monodromy representation explained in \S\ref{sec:connectivity-space-flat}.

To achieve \ref{item:sc-2}, we argue as follows. First pick small geodesic disks around the punctures which are disjoint from a holomorphic collar $C_{\infty,\sigma}$ around the boundary circle. Via a coordinate tranformation as in \S\ref{sec:cylindrical-ends}, one ensures the connection appears as the standard connection Floer's equation for $\varphi_{\infty,\kappa(t)}$, where $\kappa$ is a cut-off function $[0,1]\to [0,1]$ supported in a small interval. This is possible because $\varphi_{\infty,t}$ and $\varphi_{\infty,\kappa(t)}$ have the same time one map in the universal cover; see Proposition \ref{prop:coordinate-change-cyl}. The relevant parts of the domain are illustrated in Figure \ref{fig:constr-flat-conn}. It is crucial to note that this time reparametrization does not change the value of $c$ given in \eqref{eq:defin-c-constant}.

By picking $\kappa$ to smoothly depend on $\sigma$, we can suppose that the monodromy diffeomorphisms along arcs in the boundary, which are disjoint from the support shown in Figure \ref{fig:constr-flat-conn}, are the identity.

This construction ensures that the time-$1$ map of monodromy along the transverse arcs of the neck $N(\sigma)$ equals $\psi^{-1}\varphi_{1}\psi$. Since $\psi$ is in $\mathrm{Ham}(W;R^{\alpha},r_{0}+1)$, we can correct the connection by pushing forward by $\psi$ to ensure the monodromy along the transverse arcs equal $\varphi_{1}$. Indeed, we can push forward by a family $g$ so that $g_{z}=\psi$ holds for $z\in \bd\Sigma$, and $g_{z}=\id$ near the punctures. This achieves \ref{item:sc-2}.

The final step (pushing forward by a coordinate change $g$) can ruin the a priori energy bounds. Indeed, if $u$ solves Floer's equation for $g_{*}\mathfrak{H},J,L$ then $g^{-1}(u)$ solves Floer's equation for $\mathfrak{H},g^{-1}_{*}(J),g^{-1}(L)$; see \S\ref{sec:coord-changes-energy}. However, since $g$ is valued in $\mathrm{Ham}(W;R^{\alpha},r_{0}+1)$, $g^{-1}(L)$ still lies in $\Omega(r_{0}+1)$, and hence the constant $c$ from \eqref{eq:defin-c-constant} is unchanged when we do this conjugation. It follows that solutions $u$ in the moduli space $\mathscr{M}(\psi_{*}\mathfrak{H},J,L)$ still satisfy the energy estimate of \ref{item:sc-3}.

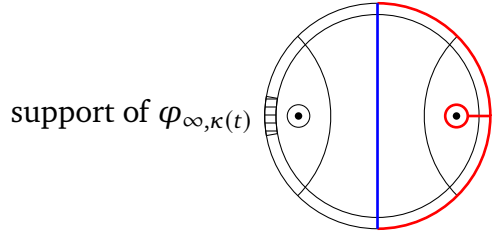
\begin{figure}[H]
  \centering
  \begin{tikzpicture}[scale=1.5]
    \begin{scope}[scale=0.5]
      \draw (-1.41421356,-1.41421356) arc (315: 405 : 2);
      \draw (1.41421356,1.41421356) arc (135: 225 : 2);
    \end{scope}
    \draw (0,0) circle (1);
    \draw (-0.7,0) circle (0.1) node[fill,circle,inner sep=1pt]{} (0.7,0) circle (0.1) node[fill,circle,inner sep=1pt]{} (0,0) circle (0.9);
    \draw[pattern={Lines[angle=40]}] (170:0.9) -- (170:1) arc (170:190:1) node[pos=0.5,left]{support of $\varphi_{\infty,\kappa(t)}$} --(190:0.9) arc (190:170:0.9);

    \draw[line width=1pt,red] (0,-1) arc (-90:0:1) -- (0.8,0) (1,0) arc (0:90:1) (0.7,0) circle (0.1);
    \draw[line width=1pt,blue] (0,-1) --(0,1);
    
  \end{tikzpicture}
  \caption{Construction of a flat connection with special properties; shown are the two cylindrical ends, the collar around the boundary, and the holomorphic neck $N(\sigma)$ of modulus $s(\sigma)\to\infty$. The coloured paths are used to compare monodromies.}
  \label{fig:constr-flat-conn}
\end{figure}

Let $\mathfrak{H}_{\sigma}$ denote the smooth family of connections obtained from $\mathfrak{H}^{\mathrm{tmp}}_{\sigma}$ by applying the coordinate changes described above. The upshot of these slightly technical requirements is that, as $\sigma\to +\infty$, solutions to $\mathscr{M}(\Sigma_{\sigma},\mathfrak{H}_{\sigma},J)$ restrict to Floer strips for the system $\varphi_{1,t}$ on the neck $N(\sigma)$.\footnote{One should also note that one can use the construction of \cite{kislev_shelukhin} described in \S\ref{sec:pair-pants-product} to construct the desired family of flat connections.}

Let $z_{1},\dots,z_{k}$ be smooth maps $\R\to \bd D$ so that $z_{1}(\sigma)=\dots=z_{k}(\sigma)=-i$ for $\sigma$ sufficiently negative. Fix also smooth maps $f_{i}:P_{i}\to L$ representing bordism classes of maps in $L$.

Consider $\mathscr{M}_{\mathrm{para}}$ to be the parametric moduli space of pairs $(p_{1},\dots,p_{k},\sigma,u)$ where $u$ is a finite energy solution to Floer's equation for $(\Sigma_{\sigma},\mathfrak{H}_{\sigma}+\delta_{\sigma},J)$ and so:
\begin{equation}\label{eq:incidence}
  u(z_{i}(\sigma))=f_{i}(p_{i}).
\end{equation}
Here $\delta_{\sigma}$ is a small uniformly compactly supported perturbation used to ensure $\mathscr{M}_{\mathrm{para}}$ is cut transversally.

Each curve appearing in $\mathscr{M}_{\mathrm{para}}$ is asymptotic at the $0$-labeled puncture to a $1$-periodic orbit $\gamma_{0}$ of $\varphi_{0,t}$ and is asymptotic at $1$-labeled puncture to a $1$-periodic orbit $\gamma_{1}$ of $\varphi_{1,t}$.

Let $c$ be given by \eqref{eq:defin-c-constant}, and restrict $\mathscr{M}_{\mathrm{para}}$ to only those $u$ whose asymptotic orbits have cappings such that the disk formed by gluing $u$ to the two cappings has zero symplectic area, and the sum of the actions of the capped orbits is greater than $c-\hbar$. It follows that the energy of any solution in $\mathscr{M}_{\mathrm{para}}$ is bounded from above by $\hbar$; in particular, no disk (or sphere) bubbling can occur in $\mathscr{M}_{\mathrm{para}}$.

Let $\mathscr{M}_{\mathrm{para},1}$ be the one-dimensional component, and consider the projection map:
\begin{equation}\label{eq:projection-map}
  (p_{1},\dots,p_{k},\sigma,u)\in \mathscr{M}_{\mathrm{para},1}\mapsto \sigma\in \R,
\end{equation}
whose fiber over $\sigma$ is denoted by $\mathscr{M}(\sigma)$. For generic $\sigma\in \R$, $\mathscr{M}(\sigma)$ is a $0$-dimensional manifold.

Suppose that $\varphi_{0,t},\varphi_{1,t}$ are as in \S\ref{sec:relev-spectr-norm}, so that the actions of the unit elements $1_{\varphi_{0,t}}$ and $1_{\varphi_{1,t}}$ are greater than $a,b$, respectively, and $a+b>c-\hbar$. In this setting, we can interpret the count of elements in $\mathscr{M}(\sigma)$ (for generic $\sigma$) as defining a map:
\begin{equation}\label{eq:b-sigma-map}
  \mathfrak{b}_{\sigma}:\mathrm{CF}(\varphi_{0,t})_{>a}\otimes \mathrm{CF}(\varphi_{1,t})_{>b}\to \Lambda_{0}.
\end{equation}
One sends a generator $\gamma_{0}\otimes \gamma_{1}$ to the number of elements in $\mathscr{M}(\sigma)$ whose asymptotics equal $\gamma_{0},\gamma_{1}$ and which form a contractible disk when glued to the cappings.

\begin{claim}\label{claim:b-sigma}
  The map $\mathfrak{b}_{\sigma}$ is a chain map for all generic $\sigma$. If the bordism classes represented by $f_{1},\dots,f_{k}$ have a total intersection product equal to the point class in $L$, then $\mathfrak{b}_{\sigma}$ is non-trivial on homology.
\end{claim}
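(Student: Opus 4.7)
\textbf{Plan for Claim \ref{claim:b-sigma}.} The two assertions will be proved in turn, with the energy bound $E(u)<\hbar<\hbar(L)$ on $\mathscr{M}_{\mathrm{para}}$ ruling out all disk and sphere bubbling; this bound follows from the restriction on the sum of actions and the zero-disk-area condition, together with \eqref{eq:defn-hbar}.

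\emph{Chain map property.} For fixed generic $\sigma$, I consider the Gromov--Floer compactification of the one-dimensional component of $\mathscr{M}(\sigma)$. The parameters $(p_{1},\ldots,p_{k})\in P_{1}\times\cdots\times P_{k}$ are closed (the $P_{i}$ are without boundary) and the marked points $z_{i}(\sigma)$ are fixed, so no boundary can arise from them; bubbling is excluded by the energy bound, and the only codimension-one strata come from Floer cylinder breakings at the $0$- and $1$-labeled cylindrical ends. Summing these breakings and recalling that $\Lambda_{0}$ carries the zero differential gives the identity $\mathfrak{b}_{\sigma}\circ(d\otimes 1+1\otimes d)=0$. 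An entirely analogous analysis of the parametric moduli space over an interval $[\sigma_{1},\sigma_{2}]$ of generic values shows that $\mathfrak{b}_{\sigma_{1}}$ and $\mathfrak{b}_{\sigma_{2}}$ are chain-homotopic, since the cobordism is compact modulo breakings at the inputs; it therefore suffices to establish non-triviality of $\mathfrak{b}_{\sigma}$ for a single, suitably chosen value of $\sigma$.

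\emph{Non-triviality.} I take $\sigma$ sufficiently negative, so that $z_{1}(\sigma)=\cdots=z_{k}(\sigma)=-i$ and, by the construction of $\mathfrak{H}_{\sigma}$, the pair $(\Sigma_{\sigma},\mathfrak{H}_{\sigma})$ is the glued configuration of a flat pair-of-pants with inputs $\varphi_{0,t},\varphi_{1,t}$ and output $\varphi_{\infty,t}=\varphi_{0,t}\varphi_{1,t}$, capped off by the augmentation half-cylinder of \S\ref{sec:augm-assoc-lagr} carrying a joint point constraint at $-i$. A standard neck-stretching/gluing argument, in the spirit of \S\ref{sec:unit-times-unit}, identifies $\mathfrak{b}_{\sigma}$ with the composition of the pair-of-pants product of \S\ref{sec:defin-pair-pants} with the augmentation $\mathfrak{a}$ of \S\ref{sec:augm-assoc-lagr}, except that the single point constraint of $\mathfrak{a}$ is replaced by the joint incidence condition $u(-i)\in f_{1}(P_{1})\cap\cdots\cap f_{k}(P_{k})$. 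After a generic perturbation of the $f_{i}$ within their bordism classes, this joint intersection realizes the unoriented intersection product $[f_{1}]\cap\cdots\cap[f_{k}]$ in $L$; under the hypothesis that this product equals $[\mathrm{pt}]$, the joint intersection may be arranged to consist of a single transverse point. The resulting operation is then exactly the composition of \eqref{eq:filtered-pop} with $\mathfrak{a}$ in the single-point setup of \S\ref{sec:relev-spectr-norm}, which was shown there to send $1_{\varphi_{0,t}}\otimes 1_{\varphi_{1,t}}$ to $1\in\Lambda_{0}$, establishing non-triviality.

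\emph{Main obstacle.} The principal difficulty lies in the $\sigma\to-\infty$ gluing/neck-stretching identification of $\mathfrak{b}_{\sigma}$ with the composition of the pair-of-pants product and $\mathfrak{a}$. This rests on the coordinate-chart construction of \S\ref{sec:deforming-pair-pants} that makes $\mathfrak{H}_{\sigma}$ degenerate into the specified split form, and on the genericity of the $f_{i}$ to rule out spurious codimension-one strata coming from the coalescence of the $k$ marked points at $-i$; any such stratum would have to absorb several of the constraints into a bubble attached to the Lagrangian boundary, which is excluded by the energy bound.
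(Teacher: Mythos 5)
Your proposal is correct and follows essentially the same route as the paper: the chain-map property and the $\sigma$-independence up to chain homotopy come from the standard boundary analysis of the (parametric) one-dimensional moduli spaces, and non-triviality is established at $\sigma\to-\infty$ by gluing, identifying $\mathfrak{b}_{\sigma}$ with the composition of the pair-of-pants product and the augmentation $\mathfrak{a}$ from \S\ref{sec:relev-spectr-norm}, using that the coalesced incidence conditions realize the intersection product $[\mathrm{pt}]$. The paper's own proof is a terser sketch of exactly these three steps, so no further comment is needed.
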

\begin{proof}
  That $\mathfrak{b}_{\sigma}$ is a chain map follows from the usual Floer theoretic arguments.
  
  When $\sigma$ is very negative, the two punctures become arbitrarily close to each other, and $\Sigma_{\sigma}$ is conformally equivalent to a very large disk $D(R)$ with $0$ and $1$ as the punctures. Gluing arguments are then used to show that $\mathfrak{b}_{\sigma}$ equals the composition of $\mathfrak{a}$ with the pair-of-pants product as in \S\ref{sec:relev-spectr-norm}, and is therefore non-trivial on homology.

  Finally, standard deformation arguments show that $\mathfrak{b}_{\sigma}$ and $\mathfrak{b}_{\sigma'}$ are chain homotopic for $\sigma,\sigma'$ generic. In particular, the non-triviality for very negative values of $\sigma$ implies the non-triviality for all generic values of $\sigma$.
\end{proof}
In the next subsection we exploit the non-triviality of $\mathfrak{b}_{\sigma}$ for large values of $\sigma$, for specific choice of the punctures $z_{i}(\sigma)$, so as to conclude chains of non-stationary Floer strips, as described in \S\ref{sec:overview}.

\subsubsection{Floer strips with multiple point constraints}
\label{sec:floer-strips-with}

The deformation argument in \S\ref{sec:prod-oper-with} produces Floer strips of arbitrarily large modulus for the system $\varphi_{1,t}$; moreover, by picking the punctures $z_{i}(\sigma)$ to be equally spaced along the lower boundary of the Floer strip, one concludes $k$ sub-strips of large modulus, each with an incidence constraint at the midpoint of its lower boundary; see Figure \ref{fig:substrips}, and compare with Figure \ref{fig:deforming-pop-2}.

\begin{figure}[H]
  \centering
  \begin{tikzpicture}
    \draw (0,0) rectangle (10,1);
    \path[decorate,decoration={markings,mark=between positions 0.1 and 0.9 step 0.2 with {\node[fill,circle,inner sep=1pt]{};}}] (0,0)--(10,0);
    \path[decorate,decoration={markings,mark=between positions 0.2 and 0.8 step 0.2 with {\draw[dashed](0,0)--+(0,1);}}] (0,0)--(10,0);
  \end{tikzpicture}
  \caption{The Floer strip with $k$ incidence constraints obtained by restricting the curves in \S\ref{sec:prod-oper-with} to the neck; this can be decomposed into a sequence of $k$ sub-strips.}
  \label{fig:substrips}
\end{figure}
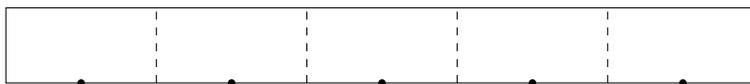

Moreover, since the total energy of a solution $u$ to the moduli space considered in \S\ref{sec:prod-oper-with} is less than $\hbar$ (by construction), the Floer strip in Figure \ref{fig:substrips} also has total energy less than $\hbar$.

By a standard compactness argument (using Arzel\`a-Ascoli), one concludes $k$ infinite-length Floer strips $v_{1},\dots,v_{k}$, each of which has an incidence constraint at $(0,0)$, i.e., $v_{i}(0,0)$ lies in the image of $f_{i}:P_{i}\to L$. By a second limiting process, one can turn off the pertubations so that $\varphi_{1,t}=\phi_{t}\circ R_{\epsilon t}\circ \delta_{1,t}$ converges to $\phi_{t}$; in this fashion, one concludes a sequence $u_{1},\dots,u_{k}$ of infinite-length Floer strips for the original system $\phi_{t}$ so that $u_{i}$ incident to the images of $f_{i}:P_{i}\to L$; see Figure \ref{fig:sequence-floer-strip}.

Note that we do not claim that the positive asymptotic chord $\gamma_{i,+}$ of $u_{i}$ matches the negative asymptotic chord $\gamma_{i+1,-}$ of $u_{i+1}$. However, because the total energy prior to breaking was less than $\hbar$, we conclude that:
\begin{equation*}
  \mathscr{A}_{\phi_{t}}(\gamma_{k,+})\le \mathscr{A}_{\phi_{t}}(\gamma_{i+1,-})\le \mathscr{A}_{\phi_{t}}(\gamma_{i,+})\le \mathscr{A}_{\phi_{t}}(\gamma_{i,-})<\mathscr{A}_{\phi_{t}}(\gamma_{k,+})+\hbar;
\end{equation*}
in particular all of the actions of the asymptotics lie in an interval of length $\hbar$.

We can take $f_{1},\dots,f_{k}$ to have positive codimension and be disjoint from the intersection $L\cap \phi_{1}(L)$ (which is presumed to be an isolated set) while still satisfying the condition that the total intersection product is the point class (as required by Claim \ref{claim:b-sigma}). Then it is clear that each $u_{i}$ is non-constant (otherwise the image of $f_{i}$ would intersect $L\cap \phi_{1}(L)$). Thus we conclude strict inequalities $\mathscr{A}_{\phi_{t}}(\gamma_{i,+})<\mathscr{A}_{\phi_{t}}(\gamma_{i,-})$.

It then follows that: $$\mathscr{A}_{\phi_{t}}(\gamma_{1,-})>\mathscr{A}_{\phi_{t}}(\gamma_{1,+})>\mathscr{A}_{\phi_{t}}(\gamma_{2,+})>\dots>\mathscr{A}_{\phi_{t}}(\gamma_{k,+})$$
are $k+1$ action values contained in an interval of length $\hbar$. Since $\hbar$ can be taken arbitrarily close to $\gamma(\phi_{t})$ in \eqref{eq:defn-hbar}, we conclude $k+1$ action values in an interval of length $\gamma(\phi_{t})$. The argument given in \S\ref{sec:moduli-approach} upgrades this to an interval of length $\gamma(\phi)$, finishing the proof of Theorem \ref{thm:main}.

\subsubsection{A conformal embedding of an infinite strip into a disk}
\label{sec:conf-embedd-strip-disk}
In this section we give a formula for a biholomorphism $w$ between the infinite strip $\R \times [0,1]$ and the closed unit disk $D(1)$ with punctures $\set{-1,1}$.

Consider $w_{1}: \R \times [0,1] \rightarrow \mathbb{H}^{\times}$ defined by $s + it\mapsto e^{\pi(s+it)}$, where $\mathbb{H}^{\times}$ is the punctured closed upper half plane. Note that $w_{1}$ maps the vertical line $\{s\} \times [0,1]$ onto the half circle $C_{s}\coloneq \{e^{\pi(s+it)}: t\in [0,1]\}$. Consider also the M\"obius transformation:
$$w_{2}: \mathbb{H} \rightarrow D(1), \ \ z\mapsto \frac{z-i}{z + i}.$$
This is a biholomorphism between $\mathbb{H}^{\times}$ and $D(1)\setminus \set{-1,1}$ (note that $0$ is sent to $-1$ and $\infty$ is sent to $+1$). The image $w_{2}(C_{s})$ is a circular arc which is orthogonal to the boundary of the disk. The desired biholomorphism is $w=w_{2}\circ w_{1}$; see Figure \ref{fig:habib-1} for an illustration.

\begin{figure}[H]
\centering
\begin{tikzpicture}[scale=0.6]
  \begin{scope}
    \clip (0,0) circle (2);
    \fill[white!80!red] (-1.41421356,-1.41421356) arc (315: 405 : 2) --(-1.41,2)--(1.41,2)-- (1.41421356,1.41421356) arc (135: 225 : 2)--(1.41,-2)--(-1.41,-2)--cycle;
  \end{scope}
  \draw (0,0) circle (2);
  \draw[red] (-1.41421356,-1.41421356) arc (315: 405 : 2);
  \draw[red] (1.41421356,1.41421356) arc (135: 225 : 2);
  \path[every node/.style={inner sep=1pt,fill,circle}] (2,0) node {} (-2,0) node {} (-1.3,0) node{} (1.3,0) node{};
  \path (2,0) node [right] {$w(+\infty)$} (-2,0) node[left] {$w(-\infty)$} (0,0) node {$N(s)$};
\end{tikzpicture}
\caption{The image $N(s)$ of the rectangle $[-s,s]\times [0,1]$ under $w$.}
\label{fig:habib-1}
\end{figure}
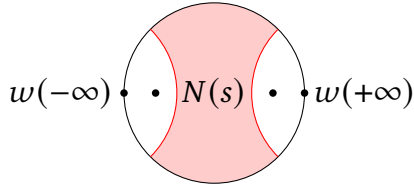

\subsection{Convergence of action values}
\label{sec:cont-spectr-invar}

In this section we prove that the action values $\mathscr{A}_{\phi_{t}}(\mathfrak{e})\in \R$ for classes $\mathfrak{e}\in \mathrm{HF}(\phi_{t})$, as defined by the limiting procedure in \S\ref{sec:HFC-intro}, are well-defined and finite. The crux of the argument is to establish the continuity of action values with respect to changing the slope $\epsilon$ of the approximation $\delta_{t}\circ R^{\alpha}_{\epsilon t}\circ \phi_{t}$.

Consider a Hamiltonian system $\phi_{t}$ supported in $\Omega(r_{0})$. Similarly to \S\ref{sec:HFC-intro}, consider perturbations $\delta_{t}\circ R^{\alpha}_{\epsilon t}\circ \phi_{t}$ so that $\delta_{t}$ is supported in $\Omega(r_{0}+1)$, and which are admissible for definining the Floer complex $\mathrm{CF}(\delta_{t}\circ R^{\alpha}_{\epsilon t}\circ \phi_{t},J)$, as in \S\ref{sec:floer-differential}.

Between any two such perturbation data $(\delta_{-},\epsilon_{-})$ and $(\delta_{+},\epsilon_{+})$ with $\epsilon_{-}\ge \epsilon_{+}$, we take the continuation map associated to the linear interpolation: $$H_{s,t}=(1-\beta(s))K^{-}_{t}+\beta(s)K^{+}_{t},$$ where $\beta: \R \rightarrow [0,1]$ is a non-decreasing cut-off function satisfying $\beta(s) = 0$, for $s\le 0$ and $\beta(s) = 1$, for $s\ge 1$.

Recall that $\mathrm{HF}(\phi_{t})$ is defined as the inverse limit of the groups $\mathrm{HF}(\delta_{t}\circ R^{\alpha}_{\epsilon t}\circ \phi_{t})$ with respect to the above continuation maps. Write $\mathfrak{e}_{\delta, \epsilon}$ for the image of $\mathfrak{e}$ in $\mathrm{HF}(\delta_{t} \circ R^{\alpha}_{\epsilon t} \circ \phi_{t})$. In this section we prove that the limit:
\begin{equation}\label{eq:limit-limit}
  \mathscr{A}_{\phi_{t}}(\mathfrak{e})=\lim_{\epsilon\to 0}\lim_{\delta\to 0}\mathscr{A}(\mathfrak{e}_{\delta,\epsilon})
\end{equation}
exists, where $\mathscr{A}(e_{\delta,\epsilon})$ is defined in \eqref{eq:min-max-action-formula}.

\subsubsection{Hofer norm estimate}
\label{sec:hofer-norm-estimate}

First we establish an inequality in one direction:

\begin{lemma}
Let $(\delta_{-}, \epsilon_{-})$ and $(\delta_{+}, \epsilon_{+})$ be two perturbation data with $\epsilon_{-}\ge \epsilon_{+}$, both smaller than the minimal Reeb period. Let $K^{-}_{t},K^{+}_{t}$ be the normalized Hamiltonian functions generating the systems $\delta_{\pm, t} \circ R^{\alpha}_{\epsilon_{\pm} t}\circ \phi_{t}$, and introduce the error term:
\begin{equation*}
  e(K^{+}_{t},K_{t}^{-})=\int_{0}^{1}\max_{x}(K^{+}_{t}(x) - K^{-}_{t}(x),0)\d t<\infty;
\end{equation*}
the integrand is the maximum of the non-negative part of $K^{+}_{t}(x)-K_{t}^{-}(x)$. Then, for any element $\mathfrak{e}\in \mathrm{HF}(\phi_{t})$, we have the following estimate for the action values:
$$\mathscr{A}(\mathfrak{e}_{\delta_{-}, \epsilon_{-}}) \ge   \mathscr{A}(\mathfrak{e}_{\delta_{+},\epsilon_{+}})- e(K_{t}^{+}, K_{t}^{-}).$$ Moreover, if $\epsilon_{+} = \epsilon_{-}$ then $$|\mathscr{A}(\mathfrak{e}_{\delta_{-}, \epsilon_{-}}) - \mathscr{A}(\mathfrak{e}_{\delta_{+},\epsilon_{+}})| \leq e(K_{t}^{+}, K_{t}^{-}).$$
\end{lemma}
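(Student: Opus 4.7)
The approach is to extract a chain-level action shift inequality from the standard energy identity for the continuation chain map $\mathfrak{c}\colon \mathrm{CF}(K^{+}_{t}, J) \to \mathrm{CF}(K^{-}_{t}, J)$ associated to the linear interpolation $H_{s,t}$, and then promote the result to cohomology via the min-max formula \eqref{eq:min-max-action-formula}. The hypothesis $\epsilon_{+} \le \epsilon_{-}$ ensures $K^{+}_{t} - K^{-}_{t} = (\epsilon_{+} - \epsilon_{-})\,r \le 0$ outside a compact set, so $\partial_{s} H_{s,t} = \beta'(s)(K^{+}_{t} - K^{-}_{t}) \le 0$ at infinity, verifying that the interpolation is admissible continuation data in the sense of \S\ref{sec:continuation-maps}. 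A no-escape argument then confines each continuation cylinder to a fixed compact set.

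For such a cylinder $u$ with asymptotics $\gamma_{\pm}$, I would invoke the standard energy identity (the convex-at-infinity analogue of \cite[Theorem 5.1]{hofer-salamon-95}, to be developed in \S\ref{sec:energy_estimates}):
\[
E(u) \;=\; \mathscr{A}_{K^{-}}(\gamma_{-}) - \mathscr{A}_{K^{+}}(\gamma_{+}) + \int_{\R}\!\!\int_{0}^{1} \beta'(s)\bigl(K^{+}_{t}(u) - K^{-}_{t}(u)\bigr)\, dt\, ds,
\]
where $\gamma_{-}$ is equipped with the capping obtained by gluing $u$ to that of $\gamma_{+}$. Since $\beta' \ge 0$ with $\int_{\R}\beta' = 1$, and since $\sup_{x}(K^{+}_{t} - K^{-}_{t})$ is achieved in a compact subset (where the difference is not forced to be $\le 0$), the curvature integral is bounded above by $e(K^{+}_{t}, K^{-}_{t})$. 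Combining with $E(u) \ge 0$ yields the chain-level action shift
\[
\mathscr{A}_{K^{-}}(\gamma_{-}) \;\ge\; \mathscr{A}_{K^{+}}(\gamma_{+}) - e(K^{+}_{t}, K^{-}_{t})
\]
for every non-zero matrix entry of $\mathfrak{c}$.

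Extending by $\F_{2}$-linearity, $\mathscr{A}(\mathfrak{c}(x^{+})) \ge \mathscr{A}(x^{+}) - e(K^{+}_{t}, K^{-}_{t})$ for every cocycle $x^{+}$, since action is the minimum over surviving generators (even after $\F_{2}$ cancellations). As each $\mathfrak{c}(x^{+})$ represents $\mathfrak{e}_{\delta_{-}, \epsilon_{-}}$ whenever $x^{+}$ represents $\mathfrak{e}_{\delta_{+}, \epsilon_{+}}$, taking the supremum over representatives in \eqref{eq:min-max-action-formula} delivers the first inequality. In the equal-slope case $\epsilon_{+} = \epsilon_{-}$, the difference $K^{+}_{t} - K^{-}_{t}$ is compactly supported (the perturbations $\delta_{\pm,t}$ being supported in $\Omega(r_{0}+1)$), so the reversed linear interpolation is also admissible continuation data in the opposite direction; the symmetric argument produces the analogous estimate $\mathscr{A}(\mathfrak{e}_{\delta_{+}, \epsilon_{+}}) \ge \mathscr{A}(\mathfrak{e}_{\delta_{-}, \epsilon_{-}}) - e(K^{-}_{t}, K^{+}_{t})$, and combining both inequalities yields the stated two-sided bound.

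The main obstacle is the first step: establishing the energy identity and the $C^{0}$-compactness of continuation cylinders in the convex-at-infinity setting, both of which hinge on the maximum principle and on the non-positive-at-infinity property of the interpolation. Once these a priori estimates are in hand, the remainder is a formal rearrangement of the min-max action.
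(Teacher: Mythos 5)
Your proposal is correct and follows essentially the same route as the paper: the paper likewise takes the continuation map for the linear interpolation, invokes the standard energy estimate to get the chain-level action shift by $e(K^{+}_{t},K^{-}_{t})$, and passes to the min-max action via a near-optimal chain representative (your supremum over representatives is the same step), with the finiteness of the error term coming from $K^{+}_{t}-K^{-}_{t}=(\epsilon_{+}-\epsilon_{-})r$ at infinity exactly as you argue. The one cosmetic caveat is that your reversed interpolation in the equal-slope case produces the error term $e(K^{-}_{t},K^{+}_{t})$ rather than $e(K^{+}_{t},K^{-}_{t})$, so the two-sided bound you actually obtain is by $\max\bigl(e(K^{+}_{t},K^{-}_{t}),e(K^{-}_{t},K^{+}_{t})\bigr)$ --- an imprecision already present in the lemma statement itself and harmless for its subsequent use.
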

\begin{proof}
  Fix some small constant $\rho>0$. One picks a chain level representative $\sum x_{i}$ for $\mathfrak{e}_{\delta_{+},\epsilon_{+}}$ so that $\min \mathscr{A}(x_{i})\ge \mathscr{A}(\mathfrak{e}_{\delta_{+},\epsilon_{+}})-\rho$. The actions of the chain level sum output by the continuation map $\mathfrak{c}(\sum x_{i})=\sum y_{j}$ can be estimated to yield:
  \begin{equation*}
    \mathscr{A}(\mathfrak{e}_{\delta_{-},\epsilon_{-}})\ge \min\mathscr{A}(y_{j})\ge \min \mathscr{A}(x_{i})-e(K_{t}^{+},K_{t}^{-}).
  \end{equation*}
  The proof of the rightmost inequality is standard and follows from the energy estimate; see, e.g., \cite{hofer-salamon-95,oh-2005-duke,ginzburg-2005-weinstein} or \S\ref{sec:energy-ident-floers}. Taking $\rho\to 0$ yields the desired result.

  To see why $e(K^{+}_{t},K^{-}_{t})$ is finite, observe that $K_{t}^{\pm}=\epsilon_{\pm}r+c_{\pm}$ where $c_{\pm}$ is locally constant outside of $\Omega(r_{0}+1)$, and hence $K^{+}_{t}-K^{-}_{t}=(c_{+}-c_{-})-(\epsilon_{-}-\epsilon_{+})t$; this is bounded from above because we assume $\epsilon_{-}\ge \epsilon_{+}$.
\end{proof}

\subsubsection{Estimating the error term}
\label{sec:estim-error-term}
Let $\phi_{t}$ be a Hamiltonian system generated by a normalized Hamiltonian $H_{t}$. Let $(\delta, \epsilon)$ be an admissible perturbation data; the system $\delta_{t} \circ R^{\alpha}_{\epsilon, t} \circ \phi_{t}$ is generated by:
\begin{equation}\label{eq:formula-for-generating}
  K_{\delta, \epsilon, t} = \Delta_{t}+\epsilon f(r\circ \delta_{t}^{-1}-r_{0})+H_{t}\circ (\delta_{t}R^{\alpha}_{\epsilon t})^{-1}+\epsilon r_{0},
\end{equation}
where $\Delta_{t}$ is the compactly supported Hamiltonian function generating $\delta_{t}$ and the constant term $\epsilon r_{0}$ is used to make $K_{\delta, t}$ normalized. It is convenient to compare with the reference Hamiltonian:
\begin{equation*}
  G_{\epsilon, t}=\epsilon f(r-r_{0})+H_{t}+\epsilon r_{0}.
\end{equation*}
Then the following estimate holds:
\begin{equation}
\label{eq:estimate_pert_data_ham}
|K_{\delta, \epsilon, t} - G_{\epsilon,t}| \leq |\Delta_{t}| + |H|_{C^{1}} |\delta_{t}\circ R^{\alpha}_{\epsilon t}|_{C^{0}} + |f|_{C^{1}} |\delta_{t}|_{C^{0}},
\end{equation}
where $|f|\coloneq \max_{x} |f(x)|$ for a map $f: W \rightarrow \mathbb{R}$ and the $C^{0}$-norm is with respect to a metric that is translation invariant at infinity. Fixing $\rho,\epsilon > 0$, define:
\begin{equation*}
B(\epsilon, \rho)\coloneq  \{ \delta : |K_{\delta, \epsilon} - G_{\epsilon}| < \rho\text{ and }\delta\circ R^{\alpha}_{\epsilon t}\circ \phi_{t}\text{ is admissible for $J$} \}.
\end{equation*}
Similarly let $C(\epsilon_{0},\rho)$ be the set of pairs $(\epsilon,\delta)$ so that $\epsilon<\epsilon_{0}$ and $\delta\in B(\epsilon,\rho)$.

From the estimate \eqref{eq:estimate_pert_data_ham}, one sees that to define Floer cohomology as in \S\ref{sec:HFC-intro} it is enough to take the limit of $\mathrm{HF}(\delta_{t}\circ R^{\alpha}_{\epsilon t}\circ \phi_{t})$ over the subcategory where $(\epsilon,\delta)\in C(\epsilon_{0}, \rho)$.

Associated to the set of perturbations $B(\epsilon,\rho)$, let:
\begin{equation*}
  A(\mathfrak{e};\epsilon,\rho)=\{\mathscr{A}(\mathfrak{e}_{\delta,\epsilon}):\delta\in B(\epsilon,\rho)\};
\end{equation*}
by the estimate \S\ref{sec:hofer-norm-estimate} the diameter of $A(\mathfrak{e};\epsilon,\rho)$ is less than $2\rho$. Associated to the larger set $C(\epsilon_{0},\rho)$, let:
\begin{equation*}
  \bar{A}(\mathfrak{e};\epsilon_{0},\rho)=\set{\mathscr{A}(\mathfrak{e}_{\delta,\epsilon}):(\epsilon,\delta)\in C(\epsilon_{0},\rho)}=\textstyle\bigcup_{\epsilon<\epsilon_{0}}A(\mathfrak{e};\epsilon,\rho).
\end{equation*}
The key to convergence of the action values is to bound the diameter of $\bar{A}(\mathfrak{e};\epsilon_{0},\rho)$.

\subsubsection{Comparing perturbations with different slopes}
\label{sec:comparing-perturbations}

We consider specific pairs: $$\varphi_{\pm,t}=\delta_{\pm,t}\circ R_{\epsilon_{\pm}t}^{\alpha}\circ \phi_{t}.$$ The construction is delicate, as we want to have precise control over the continuation map $\mathrm{HF}(\varphi_{+,t})\to \mathrm{HF}(\varphi_{-,t})$. Fix $\rho,\epsilon_{0}>0$ so that $\epsilon_{0}(1+r_{0})<\rho/3$.

Assume that $\epsilon_{+} < \epsilon_{-}<\epsilon_{0}$ and pick a perturbation data $\delta_{+} \in B(\epsilon_{+}, \rho')$ where $\rho'<\rho/3$. Consider the system $\varphi_{+,t} = \delta_{+,t}\circ R_{\epsilon_{+}t}^{\alpha}\circ \phi_{t}$ which is generated by:
\begin{equation*}
  K^{+}_{t}=\Delta_{+,t}+\epsilon_{+}f(r\circ \delta_{t}^{-1}-r_{0})+H_{t}\circ (\delta_{+,t}R^{\alpha}_{\epsilon_{+}t})^{-1}+\epsilon_{+}r_{0},
\end{equation*}
as in \eqref{eq:formula-for-generating}. Assume that $\Delta_{+,t}$ is supported in $\Omega(r_{0}+0.5)$; this is sufficient to make $\varphi_{+,t}$ admissible. It follows that:
\begin{equation*}
  K^{+}_{t}=\left\{
    \begin{aligned}
      &K^{+}_{t}&&r< r_0+0.5,\\
      &h_{+}(r)&&r_0+0.5\le r< r_{0}+1,\\
      &\epsilon_{+}r+H_{t}&&r_0+1\le r.
    \end{aligned}
  \right.
\end{equation*}
It is important to bear in mind that $H_{t}$ is locally constant on the region $r>r_{0}$. Note that $K^{+}_{t}$ is normalized as $H_{t}$ is normalized.

Define:
\begin{equation*}
  K^{-}_{t}=\left\{
    \begin{aligned}
      &K^{+}_{t}&&r< r_0+0.5,\\
      &h_{-}(r)&&r_0+0.5\le r< r_{0}+1,\\
      &\epsilon_{-} r+H_{t}&&r_0+1\le r,
    \end{aligned}\right.
\end{equation*}
where $h_{-}(r)=h_{+}(r)$ for $r$ near $r_{0}+0.5$, and $\bd_{r}h_{-}(r)\ge \bd_{r}h_{+}(r)$; such a function exists since $\epsilon_{-} > \epsilon_{+}$. We also suppose that $\bd_{r}h_{-},\bd_{r}h_{+}$ are non-negative and everywhere smaller than the minimal period of a Reeb orbit for $\alpha$; this ensures that there are no orbits of $K^{\pm}_{t}$ in the region $r_{0}+0.5\le r$.

We claim that if $\delta_{-,t} \circ R^{\alpha}_{\epsilon_{-}, t} \circ \phi_{t}$ is the system generated by $K_{t}^{-}$, then $\delta_{-} \in B(\epsilon_{-}, \rho)$. To see this we proceed as follows:
\begin{equation}
\begin{aligned}
  |K^{-} - G_{\epsilon_{-}}|
  &= |K^{-} - G_{\epsilon_{-}}|_{\Omega(r_{0} + 1)} \\
  &\leq |K^{-} - K^{+}|_{\Omega(r_{0} + 1)} + |K^{+} - G_{\epsilon_{+}}|_{\Omega(r_{0} + 1)} + |G_{\epsilon_{+}} - G_{\epsilon_{-}}|_{\Omega(r_{0}+1)}\\
  &\leq |h_{-} - h_{+}|_{\Omega(r_{0} + 1)} +  \rho' + (\epsilon_{-} - \epsilon_{+}) (|f(r-r_{0})|_{\Omega(r_{0}+1)} + r_{0})\\
  &\leq \epsilon_{0}(r_{0}+1) +  \rho/3 + \epsilon_{0} (1 + r_{0})<\rho\\
\end{aligned}
\end{equation}

Next, observe that the periodic orbits of the two systems $\varphi_{+,t}$ and $\varphi_{-,t}$ are identical and they all lie in $\Omega(r_{0})$. By a strong maximum principle argument similar to \cite[\S 2]{frauenfelder_schlenk}, all of the Floer continuation cylinders associated to the linear interpolation between $K^{\pm}_{t}$ remain in $\Omega(r_{0})$; see also \cite[\S1.3]{viterbo_functors_and_computations_1}, \cite[\S{D.3}]{ritter_tqft}.

Moreover, since the two Hamiltonians $K^{+}_{t}$ and $K^{-}_{t}$ coincide on $\Omega(r_{0})$, the continuation cylinders solve the translation invariant Floer equation in $\Omega(r_{0})$. By index reasons, the only rigid continuation cylinders are the stationary solutions (i.e., the continuation cylinders are $s$-independent). Moreover, every such cylinder contributes to the continuation map $\mathfrak{c}$, and hence $\mathfrak{c}$ is the ``identity map,'' bearing in mind that the (capped) orbits of $K^{+}_{t}$ and $K^{-}_{t}$ are the same.

This observation implies that the continuation map from $K^{+}_{t}$ to $K^{-}_{t}$ preserves action and is an isomorphism on chain level. Thus $\mathscr{A}(\mathfrak{e}_{\delta_{+}, \epsilon_{+}}) = \mathscr{A}(\mathfrak{e}_{\delta_{-}, \epsilon_{-}})$, as desired.

In particular, this implies that the sets $A(\mathfrak{e};\epsilon_{-},\rho)$ and $A(\mathfrak{e};\epsilon_{+},\rho)$ intersect. Since both sets have diameter bounded by $2\rho$, we conclude:
\begin{prop}
  If $\epsilon_{0}(1+r_{0})<\rho/3$, then the diameter of $\bar{A}(\mathfrak{e};\epsilon_{0},\rho)$ is less than $4\rho$.
\end{prop}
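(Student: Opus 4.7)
The plan is to show that for any two slopes $\epsilon_{+},\epsilon_{-}<\epsilon_{0}$ the sets $A(\mathfrak{e};\epsilon_{+},\rho)$ and $A(\mathfrak{e};\epsilon_{-},\rho)$ share a common action value. Combined with the $2\rho$-diameter bound per fixed slope established in \S\ref{sec:hofer-norm-estimate}, this pairwise intersection will force $\mathrm{diam}(\bar{A}(\mathfrak{e};\epsilon_{0},\rho))\le 4\rho$ via a single triangle inequality.

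First I would verify the pairwise intersection. Assume without loss of generality that $\epsilon_{+}\le \epsilon_{-}<\epsilon_{0}$, and pick any generic admissible perturbation $\delta_{+}\in B(\epsilon_{+},\rho')$ for some $\rho'<\rho/3$; such $\delta_{+}$ exists because admissibility is $C^{\infty}$-residual (standard Sard--Smale). Feeding this $\delta_{+}$ into the construction of \S\ref{sec:comparing-perturbations} produces a companion $\delta_{-}$ whose generating Hamiltonian $K^{-}_{t}$ agrees with $K^{+}_{t}$ on $\Omega(r_{0})$. The numerical hypothesis $\epsilon_{0}(1+r_{0})<\rho/3$ is what is consumed in the three-term decomposition
\begin{equation*}
|K^{-}-G_{\epsilon_{-}}|\le |K^{-}-K^{+}|+|K^{+}-G_{\epsilon_{+}}|+|G_{\epsilon_{+}}-G_{\epsilon_{-}}|<\tfrac{\rho}{3}+\tfrac{\rho}{3}+\tfrac{\rho}{3}
\end{equation*}
to guarantee $\delta_{-}\in B(\epsilon_{-},\rho)$. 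The strong maximum principle then confines all continuation cylinders between $K^{+}_{t}$ and $K^{-}_{t}$ to $\Omega(r_{0})$, where the two Hamiltonians coincide, so the interpolated Floer equation is $s$-invariant; by index reasons the rigid continuation cylinders are stationary, hence the continuation map is the identity on chain level. Consequently $\mathscr{A}(\mathfrak{e}_{\delta_{+},\epsilon_{+}})=\mathscr{A}(\mathfrak{e}_{\delta_{-},\epsilon_{-}})$, and since $B(\epsilon_{+},\rho')\subset B(\epsilon_{+},\rho)$, this common value lies in $A(\mathfrak{e};\epsilon_{+},\rho)\cap A(\mathfrak{e};\epsilon_{-},\rho)$.

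Second, given arbitrary $v_{+}\in A(\mathfrak{e};\epsilon_{+},\rho)$ and $v_{-}\in A(\mathfrak{e};\epsilon_{-},\rho)$ with $\epsilon_{\pm}<\epsilon_{0}$, let $w$ be the common value just constructed. The $2\rho$-diameter bound from \S\ref{sec:hofer-norm-estimate} applied separately to $\{v_{+},w\}\subset A(\mathfrak{e};\epsilon_{+},\rho)$ and to $\{v_{-},w\}\subset A(\mathfrak{e};\epsilon_{-},\rho)$ yields $|v_{+}-w|\le 2\rho$ and $|v_{-}-w|\le 2\rho$, whence $|v_{+}-v_{-}|\le 4\rho$ by the triangle inequality. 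Since $v_{\pm}$ were arbitrary elements of $\bar{A}(\mathfrak{e};\epsilon_{0},\rho)=\bigcup_{\epsilon<\epsilon_{0}}A(\mathfrak{e};\epsilon,\rho)$, the diameter bound follows.

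Essentially all substantive content is already present in \S\ref{sec:hofer-norm-estimate} and \S\ref{sec:comparing-perturbations}; the proof just assembles them. The \emph{only} point requiring care is verifying that the $\delta_{-}$ output by the interpolation procedure lands in the correct ball $B(\epsilon_{-},\rho)$, and this is exactly where the numerical hypothesis $\epsilon_{0}(1+r_{0})<\rho/3$ is used, to absorb the $|G_{\epsilon_{+}}-G_{\epsilon_{-}}|$ slack. Once that is in hand the proposition reduces to a two-line triangle inequality.
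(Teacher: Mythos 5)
Your proposal is correct and follows essentially the same route as the paper: the paper also constructs, for any $\epsilon_{+}<\epsilon_{-}<\epsilon_{0}$, a pair $\delta_{\pm}$ with $\mathscr{A}(\mathfrak{e}_{\delta_{+},\epsilon_{+}})=\mathscr{A}(\mathfrak{e}_{\delta_{-},\epsilon_{-}})$ via the action-preserving continuation map, concludes that the sets $A(\mathfrak{e};\epsilon_{\pm},\rho)$ intersect, and combines this with the $2\rho$ diameter bound from \S\ref{sec:hofer-norm-estimate}. Your write-up merely makes the final triangle inequality explicit.
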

Indeed, the above argument could be performed for any choice of $\epsilon_{+}<\epsilon_{-}<\epsilon_{0}$. This implies that the action values $\mathscr{A}(\mathfrak{e}_{\delta, \epsilon})$ converge in the sense of \eqref{eq:limit-limit}. Standard arguments show that the limit is independent of the choice of contact form, the choice of $r_{0}$, and the choice of complex structure; the details are left to the reader.

\subsection{A priori estimates, transversality, and compactness}
\label{sec:apriori_estimates}

Let $(W,\omega)$ be a convex-at-infinity symplectic manifold, $L \subset W$ a closed Lagrangian submanifold, and $\Sigma$ a compact Riemann surface with boundary and a set $\Gamma$ of interior punctures, and consider an $\omega$-tame almost complex structure $J$ which is translation invariant in the convex end.

To each Hamiltonian connection $\mathfrak{H}$ on $\Sigma\times W$ one can associate the moduli space $\mathscr{M}(\mathfrak{H},J)$ of maps $u:\Sigma\to W$ solving Floer's equation, as explained in \S\ref{sec:floers-equat-hamilt}. In this section we explain the Floer type operations we work with in our paper, in particular we will show that solutions to Floer's equation, for the operations under consideration, satisfy a priori energy bound. Following \cite{brocic_cant} and \cite{hofer-salamon-95} we will shortly review the standard bubbling analysis, transversality and compactness of Floer type moduli spaces. In \S\ref{sec:maximum_principle}, following \cite{brocic_cant}, we show that a soft maximum principle should hold whenever there is a gradient bound and energy bound.

\subsubsection{Cylindrical ends}
\label{sec:cylindrical-ends-1}

Throughout we consider cylindrical coordinates near the punctures, i.e., for each $z\in \Gamma$ one chooses a biholomorphism $\epsilon_{z}: \Sigma_{\pm} \to U_{z} \backslash \{z\} \subset \Sigma$ where $U_{z}$ is a neighborhood of $z$ and $\Sigma_{\pm}=\R_{\pm}\times \R/\Z$.

We assume $\mathfrak{H}$ equals the connection $\mathfrak{H}(\varphi_{t}^{z})$ given by the normalized connection one-form $\mathfrak{a}_{z}=H_{t}^{z}\d t$, where $H_{t}^{z}$ is the normalized generator of the system $\varphi_{t}^{z}$, in the cylindrical end corresponding to $z\in \Gamma$. The results in \S\ref{sec:cylindrical-ends} imply that any connection which is flat in the cylindrical ends can be expressed in this form, after applying some domain-dependent Hamiltonian diffeomorphism near the punctures.

See \cite[\S8d]{seidel_book} for related discussion.

\subsubsection{A priori energy estimates}
\label{sec:energy_estimates}
This section concerns a priori energy bounds for solutions $u\in \mathscr{M}(\mathfrak{H},J)$ if the asymptotics are fixed and the curvature of the corresponding Hamiltonian connection is suitably non-positive at infinity. Suppose first that $\mathfrak{H}$ is a flat Hamiltonian connection on $\Sigma \times W \rightarrow \Sigma$.

If $u\in \mathscr{M}(\mathfrak{H},J)$ is a solution to Floer's equation with contractible asymptotic orbits and contractible boundary loops $u(\bd\Sigma)$ then \S\ref{sec:floers-equat-hamilt} implies the following energy identity:
\begin{equation}\label{eq:not-cutoff}
  E(u)=\int_{\Sigma} u^{*}\omega+\int_{\bd\Sigma}H^{\bd\Sigma}_{t}\d t-\sum_{\Gamma_{+}}\int_{\R/\Z}H^{z}_{t}\d t+\sum_{\Gamma_{-}}\int_{\R/\Z}H_{t}^{z}\d t.
\end{equation}
Note that the contribution due to $\bd\Sigma$ is independent of $u$, and depends only on the constant value $H_{t}^{\bd\Sigma}$ takes on $L$.

Given our set-up, one can consider a cut-off version $\mathfrak{H}^{\sigma}$ where $\sigma\in [0,\infty)$ is a cut-off parameter, defined as follows: for each positive puncture, replace the connection in the cylindrical end by the connection generated by $\beta(s-\sigma)H_{t}^{z}\d t$, where $\beta(x)=1$ for $x\le 0$ and $\beta(x)=0$ for $x\ge 1$. The resulting connection $\mathfrak{H}^{\sigma}$ is no longer flat. However, standard computation of the energy of $u\in \mathscr{M}(\mathfrak{H}^{\sigma},J)$ gives the identity:
\begin{equation}\label{eq:cutoff}
  E(u)=\int_{\Sigma}u^{*}\omega+\int_{\bd\Sigma}H_{t}^{\bd\Sigma}\d t+\sum_{\Gamma_{-}}\int_{\R/\Z}H_{t}^{z}\d t+\sum_{z\in \Gamma_{+}}\int \beta'(s-\sigma)H_{t}^{z}\d s\d t.
\end{equation}
Since $\beta'\le 0$ and $H_{t}^{z}$ is assumed to be positive at infinity, we conclude that $E(u)$ is uniformly bounded in terms of the action of the asymptotics at the negative ends.

One can also deform $\mathfrak{H}$ or $\mathfrak{H}^{\sigma}$ on a compact subset of $\Sigma\times W$. Indeed, $\mathfrak{H}$ is locally given as some one-form $\mathfrak{a}=K\d s+H\d t$ on $\Sigma\times W$, where $K,H$ are normalized. One perturbs $K'=K+\delta_{1},H'=H+\delta_{2}$ where $\delta_{i}$ are compactly supported, defining a perturbed connection $\mathfrak{H}'$. If $u\in \mathscr{M}(\mathfrak{H}',J)$, then the results in \S\ref{sec:energy-ident-floers} give an energy identity of the form:
\begin{equation*}
  E(u)=\int_{\Sigma} u^{*}\omega+\int_{\bd\Sigma}H^{\bd\Sigma}_{t}\d t-\sum_{\Gamma_{+}}\int_{\R/\Z}H^{z}_{t}\d t+\sum_{\Gamma_{-}}\int_{\R/\Z}H_{t}^{z}\d t+\int_{\Sigma}u^{*}\mathfrak{r}.
\end{equation*}
The new term $\mathfrak{r}$ is the curvature two-form on $\Sigma\times W$ for $\mathfrak{H}'$ and is compactly supported and vanishes on vectors tangent to $W$. The additional contribution to the energy can be uniformly bounded in terms of the $C^{1}$ size of the perturbation $\delta$; see \S\ref{sec:curv-hamilt-conn} for the precise formula for the curvature term.

One similarly obtains an energy identity for compactly supported perturbations $\mathfrak{H}^{\sigma,\prime}$ of the cut-off connection by adding the $\int u^{*}\mathfrak{r}$ term to \eqref{eq:cutoff}.

\subsubsection{Bubbling}
\label{sec:bubbling}

The bubbling analysis required is standard and our arguments follow \cite[\S A]{hofer-salamon-95}, \cite[\S4]{mcduffsalamon}, and \cite[\S C]{cant_chen}; see also \S\ref{sec:tame-sympl-manif} for consideration of the non-compact end of $W$.

One shows that an a priori energy bound for a sequence of solutions to Floer's equation guarantees a gradient bound unless bubbling occurs, see, e.g., \cite[\S A]{hofer-salamon-95}. The argument, with some necessary adjustments, works in the general setting of Floer's equation associated to a Hamiltonian connection with Lagrangian boundary conditions, as in \S\ref{sec:floers-equat-hamilt} and \cite[\S8]{mcduffsalamon}.

Because we are using the semipositivity framework to achieve gradient bounds, it is important to know that bubbling implies the existence of a holomorphic sphere incident to the limiting solution to Floer's equation; see \cite[Theorem A.1.(iii)]{hofer-salamon-95}.

\subsubsection{Gradient bounds and elliptic regularity}
\label{sec:gradient_bounds}
In this paper, we work with a perturbed Cauchy-Riemann equation with a domain dependent smooth perturbation term $A_{z}$,
\begin{equation}\label{eq:H_LBC}
\begin{aligned}
\partial_{s}u + J_{z}(u)\partial_{t}u & = A_{z}(u),
\end{aligned}
 \end{equation}
and we are interested in solutions with Lagrangian boundary conditions $u(\partial \Sigma(1)) \subset L$ where $\Sigma(1) = D(1) \text{ or } D(1) \cap \overline{\mathbb{H}}$ and $J$ is a domain dependent almost complex structure. Indeed, solutions $u$ to the equation above with gradient bounded by a constant $c_{1}$, satisfy $|u|_{W^{k,2}} \leq c_{k}$ on compact subsets for all $k > 0$ where $c_{k}$ only depends on $c_{1}, J, L, A$. This is implicitly used in compactness results and bubbling analysis, see \S\ref{sec:bubbling}. We refer to \cite[\S C]{robbinsalamon} and \cite{brocic_cant} for further details.

\subsubsection{Maximum principle}
\label{sec:maximum_principle}
In this subsection assume that the elements of the moduli space $\mathscr{M}(\mathfrak{H},J)$ satisfy the gradient bound and the energy bound condition, i.e., for any sequence of points $\{z_{n}\}_{n\geq 1}$ on the domain $\Sigma$ and any sequence $\{u_{n}\}_{n\geq 1}$ of solutions the two sequences $|\nabla u_{n}(z_{n})|, E(u_{n})$ are bounded, where the norm is with respect to some translation invariant metric at infinity on $W$, and $E(u)$ is the energy of $u$.

Define $\Sigma_{0}\subset \Sigma$ to be a compact complement of cylindrical ends, so that $\mathfrak{H},J$ are translation invariant outside of $\Sigma_{0}$.

We first claim that there exists $\sigma > 0$ such that $u_{n}(\Sigma\setminus \Sigma_{0}) \subset \Omega(e^{\sigma}r_{0})$ for all $n$, where $\Omega$ is a star-shaped domain in $W$. If it does not hold, then for every $\sigma > 0$ there is $n$ so that $u_{n}(\Sigma\setminus \Sigma_{0}) \cap \rho_{\sigma}(\Omega) \neq \emptyset$. Then \cite[Proposition 2.2]{brocic_cant} implies that the sequence $u_{n}$ can not have bounded energy, which is a contradiction with our assumption.

Therefore, if the maximum principle fails for the sequence $\{u_{n}\}_{n\geq 1}$, it must fail on the compact subset $\Sigma_{0}$. Since, the image of $\Sigma_{0}$ under $u_{n}$ intersects a fixed compact subset of $W$ for every $n$ and the sequence $\{u_{n}\}_{n\geq 1}$ satisfies the gradient bound, so the images $u_{n}(\Sigma_{0})$ can not go off to infinity. Hence, the sequence $\{u_{n}\}_{n\geq 1}$ satisfies the maximum principle. See \cite{groman-maximum-principle} for related discussion.

\subsubsection{Asymptotics at punctures}
\label{sec:asymptotics_at_punctures}
Our arguments rely on standard asymptotic convergence results for solutions to the translation invariant Floer's equation in cylindrical or strip-like ends; we refer the reader to \cite[\S 1.5]{salamon1997} and \cite[\S 4]{robbinsalamon}.

\subsubsection{Transversality}
\label{sec:transversality}
Our approach to achieving transversality for moduli spaces of solutions to Floer's equation follows \cite[\S8]{mcduffsalamon}; see also \cite[\S4.2]{schwarz-thesis}, \cite{hofer-salamon-95,floer_hofer_salamon_transversality,wendl-sft}, and \cite[\S4.1]{brocic_cant}.

The strategy to achieve transversality is to perturb the Hamiltonian connection $\mathfrak{H}$ in a compactly supported fashion. More precisely, one considers Hamiltonian connections $\mathfrak{H}+\delta$ which agree with $\mathfrak{H}$ outside of a compact subset of $W\times \Sigma$. Typically we have that $\mathfrak{H}$ is a flat connection, but we do not require that the perturbations $\mathfrak{H}+\delta$ are flat; however, since the perturbations are compactly supported, we will still have the required a priori estimates, as explained in \S\ref{sec:energy_estimates}.

Fix a precompact open set $U$ in $W\times \Sigma$ large enough that every solution to Floer's equation passes through $U$; e.g., one can take $U$ to be $\Omega(r)\times D$ where $D$ is a disk in $\Sigma$ and $r$ is a sufficiently large number. Following the standard strategy, one considers $\mathscr{P}$ to be a sufficiently rich Banach space of perturbation data $\delta$ compactly supported in $U$. One then considers the universal moduli space $\mathscr{M}_{\mathrm{uni}}(\mathfrak{H})$ of solutions $(u,\delta)$ where $u$ solves the equation for the perturbed system $\mathfrak{H}+\delta$. Similarly to the proof of \cite[Theorem 8.3.1]{mcduffsalamon}, one proves that $\mathscr{M}_{\mathrm{uni}}(\mathfrak{H})$ is cut transversally, and that any regular value $\delta$ of the projection $\mathscr{M}_{\mathrm{uni}}(\mathfrak{H})\to \mathscr{P}$ will make $\mathscr{M}(\mathfrak{H}+\delta)$ cut transversally.

The construction can also be done parametrically, i.e., given a one-parameter family of data $\mathfrak{H}_{\tau},j_{\tau},J_{\tau}$, for $\tau\in [0,1]$; this parametric transversality is necessary for the deformation arguments employed in \S\ref{sec:pair-pants-product}, \S\ref{sec:deforming-pair-pants}; see also \S\ref{sec:semip-param-moduli}.

\subsection{Semipositivity}
\label{sec:semipositivity}

The idea behind the semipositive (i.e., weakly monotone) condition is to preclude the bubbling phenomenon in \S\ref{sec:bubbling} by controlling the dimension of moduli spaces of holomorphic spheres. This is achieved by constraining which Chern numbers can appear.

\begin{defn}
A symplectic manifold $(W,\omega)$ is called \emph{semipositive} if, for every $A\in \pi_{2}(W)$, the following holds:
\begin{equation*}
  \omega(A) > 0\text{ and }c_{1}(A) \geq 3 - n \implies c_{1}(A)\geq 0.
\end{equation*}
\end{defn}

Floer theory was constructed in \cite{hofer-salamon-95} for semipositive symplectic manifolds (weakly monotone in their terminology). The Arnol'd conjecture (with the $\F_2$-Betti numbers) was proved for all semipositive symplectic manifolds in \cite{ono95}.

\subsubsection{Semipositivity and Hamiltonian Floer theory}
\label{sec:semip-hamilt-floer}
Consider a moduli space $\mathscr{M}(\mathfrak{H},J)$ of solutions to Floer's equation, where $\mathfrak{H}$ is a Hamiltonian connection on $\Sigma \times W$ and $\Sigma$ is a punctured Riemann surface; see \S\ref{sec:floers-equat-hamilt}.

If $J$ is chosen generically and $\mathfrak{H}'$ is a generic perturbation of the Hamiltonian connection on the compact part of $W$, one can guarantee that every sequence $u_{n}\in \mathscr{M}(\mathfrak{H}',J)$ with index $\le 1$ and bounded energy has bounded first derivatives with respect to a metric which is translation invariant in the ends.

The genericity is used to ensure that the evaluation map of the moduli space of simple $J$-holomorphic spheres \eqref{eq:semipositive-2} is transverse to the evaluation map $\mathscr{M}(\mathfrak{H}',J)\to W$ of the Floer moduli space. With the above bound on the index of the solutions $u_{n}$, we can preclude the formation of bubbles.

To see this one argues as follows: semipositivity implies that bubbles can only have Chern number either $0$ or $1$. Holomorphic spheres with zero first Chern number form a codimension $4$ pseudocycle in $W$ and hence they generically miss the $3$-dimensional pseudocycle defined by the moduli space of solutions with index $\leq 1$. The bubbles with Chern number $1$ can only touch solutions with index $0$, i.e., $1$-periodic orbits of the Hamiltonian system. The latter also does not happen generically since the moduli space of holomorphic spheres with Chern number $1$ form a codimension $2$ pseudocycle. Finally the claim follows from bubbling analysis in \S\ref{sec:bubbling}. See \cite[\S3]{hofer-salamon-95} and \cite[\S3]{mcduffsalamon} for more details.

\subsubsection{Semipositivity and parametric moduli spaces in Hamiltonian Floer theory}
\label{sec:semip-param-moduli}
The argument in \S\ref{sec:semip-hamilt-floer} can be done parametrically, as follows.

Fix a one-parameter family $\mathfrak{H}_{\theta},j_{\theta}$ for $\theta\in [0,1]$, where $j_{\theta}$ is a family of conformal structures on $\Sigma$ (i.e., unlike the preceding discussion, the Riemann surface structure is not fixed).

One considers the parametric moduli space $\mathscr{M}(\mathfrak{H}_{\theta},J,j_{\theta})$ whose solutions are pairs $(\theta,u)$ where $u$ solves Floer's equation for $\mathscr{M}(\mathfrak{H}_{\theta},J)$ on the Riemann surface $(\Sigma,j_{\theta})$. As above, there is a natural evaluation map:
\begin{equation}\label{eq:ev_param}
  (\theta,u,z)\in \mathscr{M}(\mathfrak{H}_{\theta},J,j_{\theta})\times \Sigma\mapsto u(z)\in W,
\end{equation}
which defines a pseudochain of dimension $2+\dim \mathscr{M}(\mathfrak{H}_{\theta},J,j_{\theta})$. If all components of $\mathscr{M}$ are cut transversally, then a sphere of Chern number $\ge 1$ cannot bubble off along any sequence whose index in $\mathscr{M}$ is $0$ or $1$, while a sphere of Chern number $0$ cannot bubble off such a sequence because the evaluation map on the moduli space of simple holomorphic spheres is codimension at least $4$ in this case.

Thus one can use bubbling analysis to derive a priori $C^{1}$ estimates on $0$ or $1$ dimensional components of parametric moduli spaces. This is used, e.g., to prove unit times unit is unit in \S\ref{sec:unit-times-unit}.

\subsection{Tame symplectic manifolds}
\label{sec:tame-sympl-manif}
A symplectic manifold $(W,\omega,J)$ with almost complex structure $J$ is called \emph{tame} if there is a metric $g$ so that $\omega(v,Jv)/g(v,v)$ is uniformly positive for $v\ne 0$, and $(W,J,g)$ is tame as an almost complex manifold as in \S\ref{sec:tame-almost-complex-manifold}. See \cite[2.3.{$\mathrm{A}'$}]{gromov85}, \cite{polterovich_lag_displacement_energy,chekanov_1998}. Note that, in the case where $W$ is compact, tameness is equivalent to $\omega(v,Jv)>0$ for $v\ne 0$.

\subsubsection{Tame almost complex manifolds}
\label{sec:tame-almost-complex-manifold}
A manifold $(W,J,g)$ with a complete metric $g$ and almost complex structure $J$ is called \emph{tame} if there are constants $\delta>0$ and $C>0$ so that:
\begin{enumerate}
\item $J$ acts by $g$-isometries in each tangent space,
\item $W$ can be covered by coordinate charts identified with $B(\delta)\subset \mathbb{C}^{n}$ so that the $C^{2}$ sizes of $g$ and $J$ are bounded by $C$ in each chart.
\end{enumerate}
It follows that the injectivity radius of $g$ is bounded from below, and the sectional curvatures are uniformly bounded; see \cite[2.3.{$\mathrm{A}'$}]{gromov85}.

\subsubsection{Mean value property}
\label{sec:mean_value_property}
Let $(M^{2n},J,g)$ be a tame almost complex manifold. The mean value property for the energy density states that there are positive constants $c$ and $\epsilon$ depending on $J,g$, such that the following holds:
\begin{equation*}
  \int_{D(r)}|\d u|_{g}^{2}\d s\d t < \epsilon \implies |\d u(0)|_{g}^{2} \leq \frac{c}{r^{2}}\int_{D(r)}|\d u|_{g}^{2}\d s\d t,
\end{equation*}
for all $J$-holomorphic curves $u: D(r)\rightarrow M$, where $D(r)$ is the standard disk with radius $r$; see \cite{robbinsalamon,mcduffsalamon,cant_chen}. A similar result holds for $J$-holomorphic half-disks whose real-part lies on a compact totally real submanifold.

\subsubsection{Diameters of low energy annuli in tame almost complex manifolds}
\label{sec:diff-ineq-annuli}
This section concerns removal of singularities, and is used in the bubbling analysis in \S\ref{sec:bubbling}. Let $A(r)$ be the annulus (or rectangle) $[-r,r]\times S$, with $S=\R/\Z$ or $S=[0,1]$.

\begin{prop}
  Let $(W,J)$ be tame and $L$ be a compact totally real submanifold. For every $\delta>0$ there exists $\epsilon>0$ so that if $u:A(r+1)\to W$ is $J$-holomorphic then:
  \begin{equation*}
    \int_{A(r+1)} \abs{\bd_{s}u}^{2}_{g}\d s\d t\le \epsilon\implies \mathrm{diam}_{g}(u(A(r)))\le \delta.
  \end{equation*}
  Note that $\epsilon$ does not depend on $r$. In the case when $S=[0,1]$, we require $u$ to map the $t=0,1$ boundaries into $L$.
\end{prop}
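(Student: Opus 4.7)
The plan combines the mean value property with an exponential energy decay estimate for low-energy $J$-holomorphic annuli. Set $E = \int_{A(r+1)}|du|_g^2\,ds\,dt$. By the mean value property of \S\ref{sec:mean_value_property} (applied in interior disks, or in half-disks when the point lies near the totally real boundary along $L$), there exist a threshold $\epsilon_0>0$ and a constant $C_0>0$, depending only on $(J, g, L)$, such that whenever $E \le \epsilon_0$ the pointwise bound $|du(z)|_g \le C_0 \sqrt{E}$ holds for every $z \in A(r+1/2)$. In particular, each cross-sectional loop $\gamma_s = u|_{\{s\}\times S}$ has length $\le |S|\cdot C_0 \sqrt{E}$, so its image lies in a single coordinate ball of the tame atlas, uniformly in $r$ and $s$.

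Second, I would establish exponential decay for the cross-sectional energy $e(s) = \tfrac{1}{2}\int_S |\bd_t u(s,t)|^2\,dt$. Once $|du|$ is pointwise small, one passes to a tame chart, straightening $L$ (if present) to a linear totally real subspace up to a small $C^2$ error. A direct computation from the Cauchy–Riemann equation then yields the differential inequality
\[ e''(s) \ge \mu^2 e(s) \]
on $[-r-1/2, r+1/2]$, where $\mu > 0$ is the first positive eigenvalue of $-\bd_t^2$ on $S$ with periodic boundary conditions when $S = \R/\Z$, or with Dirichlet-type conditions from the linearized totally real constraint when $S = [0,1]$. The zeroth-order error terms are dominated via a Poincar\'e/Wirtinger estimate and the smallness of $|du|_g$ itself. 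A convexity argument gives the $\cosh$-type bound $e(s) \le K E\bigl(e^{-\mu(r+1/2-s)} + e^{-\mu(r+1/2+s)}\bigr)$ for a universal constant $K$.

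Third, one integrates to control the diameter. For any two points $(s_1, t_1), (s_2, t_2) \in A(r)$,
\[ d_g(u(s_1, t_1), u(s_2, t_2)) \le \int_S |\bd_t u(0, t)|\,dt + 2\int_0^r \sup_{t} |\bd_s u(s, t)|\,ds. \]
The first integral is bounded by $\sqrt{2|S|\cdot e(0)}$ via Cauchy–Schwarz. For the second, applying the mean value inequality on small disks centered at $(s,t)$ gives $\sup_t |\bd_s u(s, t)| \le C_1 \sqrt{\sup_{|s'-s|\le 1/2} e(s')}$, and the exponential decay of $e$ produces a convergent integral bounded by a multiple of $\sqrt{E}$ that is independent of $r$. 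Hence $\mathrm{diam}_g(u(A(r))) \le C_2 \sqrt{E} \le C_2\sqrt{\epsilon}$, and choosing $\epsilon < (\delta/C_2)^2$ completes the argument.

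The main obstacle will be justifying the differential inequality $e'' \ge \mu^2 e$ uniformly in the tame, non-K\"ahler setting with totally real boundary conditions. The closed-loop case $S = \R/\Z$ is classical and follows standard references such as \cite{hofer-salamon-95,mcduffsalamon}; the strip case requires an analogous eigenvalue estimate for the Dirichlet-type operator obtained after linearizing $L$. This can be handled by a perturbative argument in tame normal coordinates, exploiting the pointwise smallness of $|du|_g$ from the first step to absorb the non-flat corrections to the Cauchy–Riemann operator into the spectral gap.
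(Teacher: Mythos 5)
Your argument is correct and follows essentially the same route as the paper, which simply invokes the mean value property together with exponential energy decay on sub-annuli and defers the details to \cite[\S6]{cant_chen} and \cite[\S4]{mcduffsalamon}. The details you supply --- pointwise gradient bounds from the (interior and boundary) mean value inequality, the differential inequality $e''\ge \mu^{2}e$ for the cross-sectional energy with the spectral gap of the linearized totally real boundary condition, and integration of the resulting decay to bound the diameter uniformly in $r$ --- are precisely the standard ones carried out in those references.
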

\begin{proof}
  One uses the mean-value property from \S\ref{sec:mean_value_property} and proves the energy of $u$ over smaller domains $A(s)$ decays exponentially as $s$ approaches $0$. For the detailed argument, see \cite[\S6]{cant_chen} and \cite[\S4]{mcduffsalamon}.
\end{proof}

Suppose that $u:\mathbb{C}\to W$ or $u:\mathbb{H}\to (W,L)$ is holomorphic with finite energy. The non-compact end of $\mathbb{C}$ or $\mathbb{H}$ can be covered by pieces biholomorphic to $A(r)$, and so that $u$ has small energy on these pieces. One thereby concludes the image of $u$ has finite diameter, and $u$ has a continuously removable singularity at $\infty$. With a bit more work, one can show that the extension is of sufficiently regularity ($W^{1,p}$ for $p$ bigger than $2$) in order to conclude the continuous extension of $u$ is a smooth holomorphic sphere or disk with boundary on $L$; this smooth removal of singularities is explained in greater detail in \cite{mcduffsalamon}.

\appendix
\section{Flat Hamiltonian connections}
\label{sec:curvature}

Our approach to Floer's equation on general Riemann surfaces is via bundles with a Hamiltonian connection. See \cite[\S8]{mcduffsalamon}, \cite[\S7]{seidel_thesis}, \cite[\S9.3]{polterovich_book} for a similar approach.

\subsection{Ehresmann connections}
\label{sec:ehresm-conn}
To every smooth fiber bundle $\pi:E\to B$ one associates the \emph{vertical sub-bundle} $V=\ker \d\pi \subset TE$. An \emph{Ehresmann connection} is a smoothly varying choice of linear complement $\mathfrak{H}\subset TE$.

\subsubsection{Complete connections}
\label{sec:complete-connections}
Let $E\to B$ be a fiber bundle with an Ehresmann connection $\mathfrak{H}$. Every vector field $Y$ on $B$ has a unique lift to a horizontal vector field $Y_{\mathfrak{H}}\in \mathfrak{H}$. An Ehresmann connection is called \emph{complete} provided that every compactly supported vector field $Y$ on $B$ lifts to a complete vector field on $E$.

\subsubsection{Monodromy diffeomorphisms}
\label{sec:monodr-diff}

Let $b(t)$ be a path in $B$. If $\mathfrak{H}$ is a complete Ehresmann connection on $E\to B$, then for every $e_{0}\in E_{b(0)}$ there is a unique \emph{horizontal lift} $e(t)$ so $e(0)=e_{0}$, $\pi(e(t))=b(t)$, and $e'(t)\in \mathfrak{H}$. The map which associates $e_{0}$ to $e(1)$ is a diffeomorphism $E_{x(0)}\to E_{x(1)}$ and is called the \emph{monodromy} of $b(t)$.

As an example, solutions of the ODE $y'(x)=F(x,y(x))$, $y(0)=y_{0}$, are horizontal lifts of $x(t)=t$ for the Ehresmann connection $\mathfrak{H}=\set{\d y-F(x,y(x))\d x=0}$ on $\R^{2}$. The map which associates an initial condition $y_{0}$ to $y(1)$ is the prototypical example of a monodromy diffeomorphism.

\subsubsection{Flat connections}
\label{sec:flat-connections}

An Ehresmann connection $\mathfrak{H}$ is called \emph{flat} provided that, for every choice of $e\in E_{b}$, there exists a germ of a section $\mathfrak{s}$ of $E\to B$ at $b$ satisfying $\mathfrak{s}(b)=e$ and $\im(\d \mathfrak{s})=\mathfrak{H}$. If $\mathfrak{H}$ is a complete flat connection, then deformations of a path $b(t)$ relative its endpoints do not change the monodromy.

\subsubsection{Curvature of an Ehresmann connection}
\label{sec:curv-ehresmann}

Let $Y_{1},Y_{2}$ be two vector fields on $B$ defined in a neighborhood of $b$. Let $Y_{i,\mathfrak{H}}$ denote their horizontal lifts to $E$. For $e\in E_{b}$, define:
\begin{equation*}
  R_{\mathfrak{H},e}(Y_{1},Y_{2})\coloneq[Y_{1,\mathfrak{H}},Y_{2,\mathfrak{H}}]-[Y_{1},Y_{2}]_{\mathfrak{H}}.
\end{equation*}
It is a standard exercise in manipulating the Lie bracket to show that $R_{\mathfrak{H},e}(Y_{1},Y_{2})$ is valued in the vertical sub-bundle $V\subset TE$. Moreover, $R_{\mathfrak{H},e}$ commutes with multiplication by smooth functions; in particular, $R_{\mathfrak{H},e}$ is induced by a tensor $TB_{b}\wedge TB_{b}\to V_{e}$. The resulting tensor $R_{\mathfrak{H}}:\pi^{*}(TB\wedge TB)\to V$ is called the \emph{curvature tensor} of $\mathfrak{H}$.

It is clear that connection is flat if and only if its curvature tensor is everywhere zero.

\subsection{Hamiltonian connections on trivial bundles}
\label{sec:hamilt-conn-trivial}

Let $E=W\times B\to B$ be a trivial bundle and suppose the fiber $(W,\omega)$ is a symplectic manifold. A \emph{Hamiltonian connection} is an Ehresmann connection $TE=TW\oplus \mathfrak{H}$ (where $TW$ is identified with the vertical sub-bundle $V$ of the fibration $E\to B$) so that:
\begin{enumerate}
\item $\mathfrak{H}$ is the $\Omega$-complement to $TW$,
\item $\Omega\coloneq\pr^{*}\omega-\d \mathfrak{a}$, and,
\item $\mathfrak{a}$ is a one-form on $E$ which vanishes on $TW$.
\end{enumerate}
It follows that $\mathfrak{H}$ is a linear complement to $TW$ and hence defines an Ehresmann connection.

\subsubsection{One-forms vanishing on the vertical bundle}
\label{sec:one-forms-vanishing-vert}
If $\mathfrak{a}$ is a one-form on $E$ which vanishes on $TW$, and $x_{1},\dots,x_{k}$ are coordinates on $B$ pulled back to coordinates on $E$, then $\mathfrak{a}$ can be written as $\mathfrak{a}=H_{1}\d x_{1}+\dots+H_{k}\d x_{k}$. Here $H_{i}$ is considered a function on $W\times B$, i.e., it is domain dependent. In this sense, $\mathfrak{a}$ can be considered as a one-form on $B$ taking values in $C^{\infty}(W,\R)$; see, e.g., \cite[\S8e]{seidel_book}, \cite[pp.\,3293]{kislev_shelukhin}.

\subsubsection{Monodromy of a Hamiltonian connection is Hamiltonian}
\label{sec:monodr-hamilt-conn}

Fix local coordinates $x_{1},\dots,x_{k}$ on the base $B$ of the trivial bundle $W\times B\to B$. Consider the Hamiltonian connection $\mathfrak{H}$ induced by $\mathfrak{a}=H_{1}\d x_{1}+\dots+H_{k}\d x_{k}$ as in \S\ref{sec:one-forms-vanishing-vert}. Let $x:[0,1]\to B$ be a path in $B$ remaining in the local coordinate chart. Then the induced monodromy diffeomorphism $W\to W$ is given by the time-one map of the Hamiltonian system:
\begin{equation*}
  \gamma'(t)=\textstyle\sum_{i=1}^{k}x_{i}'(t)X_{H_{i}}(\gamma(t)).
\end{equation*}
Indeed, the velocity of $(x(t),\gamma(t))$ is $\Omega$-orthogonal to $V=TW$ when $\Omega=\pr^{*}\omega-\d \mathfrak{a}$, as can be checked directly. As a consequence, the monodromy of any Hamiltonian connection along any path is a Hamiltonian diffeomorphism.

\subsubsection{Curvature of a Hamiltonian connection is Hamiltonian}
\label{sec:curv-hamilt-conn}

Let $\mathfrak{H}$ be the Hamiltonian connection induced by a one-form $\mathfrak{a}$ as above. Then:
\begin{equation}\label{eq:curvature_formula}
  R_{\mathfrak{H}}(\bd_{i},\bd_{j})\intprod \omega=-\d (\pd{H_{i}}{x_{j}}-\pd{H_{j}}{x_{i}}+\set{H_{i},H_{j}})=-\d \mathfrak{r}(\bd_{i},\bd_{j}).
\end{equation}
In particular, the vertical curvature vectors define a Hamiltonian vector field on $W$, and the generating Hamiltonians can be encoded as a curvature two-form $\mathfrak{r}$. See \cite[pp.\,3293]{kislev_shelukhin} for similar formula.

Note that if $\mathfrak{H}$ is flat, then $\mathfrak{r}$ is not necessarily zero, but is the pullback of a two-form from $B$ (assuming $W$ is connected). Typically the way to ensure that $\mathfrak{r}=0$ for flat connections is to enforce some normalization conditions on the Hamiltonian functions appearing in $\mathfrak{a}$; see \S\ref{sec:norm-cond-2}.

\subsubsection{Coordinate changes for Hamiltonian connections}
\label{sec:coordinate-changes-hamilt-conn}
The following lemma is key for constructing and manipulating Hamiltonian connections; it ensures that the class of Hamiltonian connections is closed under a large family of coordinate changes.

\begin{lemma}
  Let $E=W\times B$. A contractible family $g_{x}$ of Hamiltonian diffeomorphisms, where $x\in B$, has a total map $g(w,x)=(g_{x}(w),x)$ which satisfies:
  \begin{equation*}
    g^{*}\pr^{*}\omega=\pr^{*}\omega+\d \mathfrak{b},
  \end{equation*}
  where $\mathfrak{b}$ is a one-form which vanishes on the vertical bundle.
\end{lemma}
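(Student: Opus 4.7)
The plan is to record two basic properties of the difference two-form $\alpha := g^{*}\pr^{*}\omega - \pr^{*}\omega$ on $E$, and then to produce $\mathfrak{b}$ with $\d\mathfrak{b}=\alpha$ first via a fiberwise homotopy operator and then to adjust the result by an exact form so that it vanishes on vertical vectors. The two observations are that $\alpha$ is closed (since $\pr^{*}\omega$ is and $g$ is a diffeomorphism), and that $\alpha$ vanishes on pairs of vertical vectors: if $v_{1},v_{2}\in T_{w}W\subset T_{(w,x)}E$, then $\d g(v_{i})=((\d g_{x})v_{i},0)$ has zero $B$-component, so $\pr^{*}\omega(\d g(v_{1}),\d g(v_{2}))=\omega_{g_{x}(w)}((\d g_{x})v_{1},(\d g_{x})v_{2})=\omega_{w}(v_{1},v_{2})=\pr^{*}\omega(v_{1},v_{2})$ by the symplecticity of $g_{x}$.

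Next, I would use the contractibility to choose a smooth homotopy $g_{x,s}$, $s\in[0,1]$, with $g_{x,0}=\id$ and $g_{x,1}=g_{x}$, each individual path $s\mapsto g_{x,s}$ a Hamiltonian isotopy satisfying $\bd_{s}g_{x,s}=X_{F_{x,s}}\circ g_{x,s}$ for a globally smooth $F:W\times B\times[0,1]\to\R$. Set $G:E\times[0,1]\to E$ by $G(w,x,s)=(g_{x,s}(w),x)$; then $G^{*}\pr^{*}\omega$ is closed, and the fiber integration $\mathfrak{b}_{0}:=\int_{0}^{1}\bd_{s}\intprod G^{*}\pr^{*}\omega\,\d s$ satisfies $\d\mathfrak{b}_{0}=G_{1}^{*}\pr^{*}\omega-G_{0}^{*}\pr^{*}\omega=\alpha$ by the Cartan-type identity for integration along the interval factor.

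The last step is to kill the vertical part of $\mathfrak{b}_{0}$. A direct computation using $g_{x,s}^{*}\omega=\omega$ and $\omega(X_{H},\cdot)=-\d H$ gives $\mathfrak{b}_{0}(v^{W},0)=-\d_{W}f(v^{W})$, where $f(w,x):=\int_{0}^{1}F_{x,s}(g_{x,s}(w))\,\d s$. Setting $\mathfrak{b}:=\mathfrak{b}_{0}+\d f$ then satisfies $\d\mathfrak{b}=\alpha$ and cancels the vertical component of $\mathfrak{b}_{0}$, so $\mathfrak{b}$ vanishes on vertical vectors, as required.

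The main delicate step is the existence of smooth generators $F_{x,s}$ depending smoothly on both $x$ and $s$, which is precisely where the contractibility hypothesis enters: it provides a globally defined homotopy through Hamiltonian diffeomorphisms. Without it, the variation vector fields associated to moving $x$ through $B$ would only be symplectic, $f$ might fail to exist, and the fiber restriction of $\mathfrak{b}_{0}$ would only be closed rather than exact. Once the smooth Hamiltonian family is in hand, the rest of the argument is a direct unwinding of Cartan's magic formula and the defining identity $\omega(X_{H},\cdot)=-\d H$.
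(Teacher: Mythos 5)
Your proof is correct and follows essentially the same route as the paper: both use the contraction $g_{x,s}$ through Hamiltonian isotopies together with the Cartan homotopy formula, integrating the contraction $\bd_{s}\intprod G^{*}\pr^{*}\omega$ over $s$ to produce a primitive of $g^{*}\pr^{*}\omega-\pr^{*}\omega$. The only (cosmetic) difference is that the paper arranges the integrand at each time to already vanish on vertical vectors, whereas you first take the raw homotopy-operator primitive $\mathfrak{b}_{0}$ and then cancel its vertical part by adding the exact form $\d f$ with $f=\int_{0}^{1}F_{x,s}\circ g_{x,s}\,\d s$ --- a correct and cleanly executed variant.
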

\begin{proof}
  Indeed, if $g_{x,t}$ is a path of systems $g_{x,1}=g_{x}$ and $g_{x,0}=\id$, then differentiating $g_{t}(w,x)=(g_{x,t}(w),x)$ with respect to $t$ yields:
  \begin{equation}\label{eq:coordinate_change_formula}
    \pd{}{t}g^{*}_{t}\pr^{*}\omega=\d(g_{t}^{*}\pr^{*}[\d H_{x,t}])=-\d(\sum_{i=1}^{n}\pd{}{x_{i}}(H_{x,t}\circ g_{x,t})\d x_{i})=\d \beta_{t},
  \end{equation}
  where $H_{x,t}$ are the generators of the systems $t\mapsto g_{x,t}$. The existence of the path $g_{x,t}$ is what we mean when we say $g_{x,t}$ is a contractible family.

  Integrating $\beta_{t}$ over $t\in [0,1]$ constructs the primitive one-form $\mathfrak{b}$ which vanishes on the vertical bundle. In particular, if $\mathfrak{H}_{2}$ is a Hamiltonian connection, then $\mathfrak{H}_{1}=g^{-1}_{*}\mathfrak{H}_{2}$ is also Hamiltonian.
\end{proof}

\subsubsection{Contact-at-infinity Hamiltonian connections}
\label{sec:contact-at-infinity}

A Hamiltonian connection $\mathfrak{H}$ is called \emph{contact-at-infinity} if the one-form $\mathfrak{a}$ satisfies that $\mathfrak{a}(X)$ is one-homogeneous with respect to the Liouville flow, up to the addition of a function which is constant outside of a compact set, for all tangent vectors $X\in TB_{x}$. Here $\mathfrak{a}(X)$ is defined by lifting $X$ arbitrarily to a family of vectors along the fiber $W\times \set{x}$.

Similarly, one says that $\mathfrak{H}$ is $\alpha$-Reeb outside of $\Omega(r_{0}+1)$ provided each function $\mathfrak{a}(X)$ equals $r$, up to the addition of a function which is constant outside of $\Omega(r_{0}+1)$, for all tangent vectors $X\in TB_{x}$. Here recall that $\Omega(r_{0}+1)$ is the starshaped domain and $r$ is the radial function associated to the Reeb flow for $\alpha$, as explained in \S\ref{sec:reeb-flow-convex-end}.

If $\mathrm{Ham}(W)$ denotes the group of contact-at-infinity Hamiltonians, then contractible families $g_{x}\in \mathrm{Ham}(W)$ preserve the class of contact-at-infinity Hamiltonians, as can be checked by the formula for $\mathfrak{b}$ in \S\ref{sec:coordinate-changes-hamilt-conn}.

Similarly, the group $\mathrm{Ham}(W;R^{\alpha},r_{0}+1)$ of Hamiltonian diffeomorphisms which agree with an $\alpha$-Reeb flow outside of $\Omega(r_{0}+1)$ acts on the space of connections which are $\alpha$-Reeb outside of $\Omega(r_{0}+1)$.

Henceforth, we fix the parameter $r_{0}$ and say $\mathfrak{H}$ is $\alpha$-Reeb when it is $\alpha$-Reeb outside of $\Omega(r_{0}+1)$. The consideration of this particular class of connections is important for establishing the energy estimate in \S\ref{sec:prod-oper-with}.

The results of \S\ref{sec:monodr-hamilt-conn} specialize and imply that the monodromy of a contact-at-infinity, resp., $\alpha$-Reeb, connection $\mathfrak{H}$ is valued in $\mathrm{Ham}(W)$, resp., $\mathrm{Ham}(W;R^{\alpha},r_{0}+1)$.

\subsubsection{Normalization conditions}
\label{sec:norm-cond-2}

We say that a one-form $\mathfrak{a}$ on $W\times B$ (which vanishes on vertical vectors) is \emph{normalized} if $\mathfrak{a}(X)$ is normalized according to \S\ref{sec:norm-cond} on the fiber $W\times\set{x}$ for every vector $X\in TB_{x}$. Every contact-at-infinity, resp $\alpha$-Reeb, Hamiltonian connection can be generated by such a normalized one-form.

An important property of normalized connections $\mathfrak{H}$ is that the corresponding curvature two-form $\mathfrak{r}$ appearing in \eqref{eq:curvature_formula} is normalized, and if the connection is flat the curvature is identically zero. Moreover, if normalized one-forms $\mathfrak{a}$ and $\mathfrak{b}$ generate the same connection $\mathfrak{H}$ then $\mathfrak{a} = \mathfrak{b}$.

\subsubsection{Conjugation of monodromy and coordinate changes}
\label{sec:conj-monodr-coord}

Let $\mathfrak{H}$ be a flat Hamiltonian connection on $W\times B$. Let $g_{x}$ represent a coordinate change as in \S\ref{sec:coordinate-changes-hamilt-conn}. Given a path $x(t)$ in the base, we have:
\begin{equation*}
  g_{x(1)}^{-1}\circ (\text{monodromy of $g_{*}\mathfrak{H}$ along $x(t)$})\circ g_{x(0)}=(\text{monodromy of $\mathfrak{H}$ along $x(t)$}).
\end{equation*}
This observation is used in \S\ref{sec:prod-oper-with} to obtain a connection with a desired monodromy.

\subsection{Locally trivial flat Hamiltonian connections}
\label{sec:locally-trivial-flat}

In this section, we consider bundles with structure group $\mathrm{Ham}(W)$ or $\mathrm{Ham}(W;R^{\alpha},r_{0}+1)$. The results in this section are phrased using $\mathrm{Ham}(W)$ for notational simplicity, although the constructions in this section work equally well if $\mathrm{Ham}(W)$ is replaced throughout with $\mathrm{Ham}(W;R^{\alpha},r_{0}+1)$.

\subsubsection{Locally trivial flat connection}
\label{sec:locally-trivial-flat-hamiltonian}

Let $E\to \Sigma$ be a locally trivial bundle with fiber $W$ and structure group $\mathrm{Ham}(W)$. An Ehresmann connection $\mathfrak{H}$ on $E$ is called \emph{flat} if around each point of the base there is a chart around the point so that the connection appears flat as in \S\ref{sec:flat-connections}. Such a bundle $(\pi:E\to \Sigma,\mathfrak{H})$ will be called a \emph{locally trivial Hamiltonian bundle with a flat connection}.

For example, if one can pass to a subatlas where the changes of trivialization are locally constant on each intersection $U_{0}\cap U_{1}$, then the locally defined flat connections glue together to form a flat connection $\mathfrak{H}$.

\subsubsection{Pairs of pants}
\label{sec:pair-pants}

The pair-of-pants $\Sigma$ is a two-sphere with three punctures; we can fix this as $\mathrm{CP}^{1}$ with the punctures at $0,1$ and $\infty$.
\begin{prop}\label{prop:pop-covering}
  There is a simply-connected Riemann surface $\Sigma'$ with a free-and-proper action of the free-group $\mathrm{F}_{2}$ by biholomorphisms, and a biholomorphism $\Sigma'/\mathrm{F}_{2}\to \Sigma$. The monodromy of the covering space around $0$ equals the action of the first generator of $\mathrm{F}_{2}$, and the monodromy around the $1$ equals the second generator of $\mathrm{F}_{2}$.
\end{prop}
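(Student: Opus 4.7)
The plan is to invoke the uniformization theorem together with elementary covering space theory. Note first that $\Sigma = \mathrm{CP}^{1}\setminus\{0,1,\infty\}$ deformation retracts onto a figure-eight spanning the punctures $0$ and $1$, so $\pi_{1}(\Sigma)$ is the free group $\mathrm{F}_{2} = \langle \gamma_{0},\gamma_{1}\rangle$ generated by the homotopy classes of small loops around $0$ and $1$. Since $\Sigma$ is a non-compact Riemann surface that is not simply connected, uniformization provides a holomorphic universal covering $p\colon \Sigma' \to \Sigma$ with $\Sigma'$ biholomorphic to either $\mathbb{C}$ or the upper half-plane $\mathbb{H}$. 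The first case is excluded because every biholomorphism of $\mathbb{C}$ is affine, and a freely acting group of affine transformations must consist of translations and is therefore abelian, whereas $\mathrm{F}_{2}$ is nonabelian. Hence $\Sigma' \cong \mathbb{H}$.

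Next, I would observe that the deck transformation group of any universal cover is canonically isomorphic to $\pi_{1}(\Sigma) \cong \mathrm{F}_{2}$, acts freely and properly discontinuously on $\Sigma'$, and acts by biholomorphisms because $p$ is holomorphic. The induced map $\Sigma'/\mathrm{F}_{2} \to \Sigma$ is then a biholomorphism by construction of the quotient. To match the abstract generators with the monodromies, fix a basepoint $z_{0} \in \Sigma$ and a lift $\tilde{z}_{0} \in \Sigma'$; the element $\gamma_{i}$ acts as the unique deck transformation sending $\tilde{z}_{0}$ to the endpoint of the lift of $\gamma_{i}$ based at $\tilde{z}_{0}$, which is exactly the defining property of the monodromy around $i$.

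A concrete model realizing all of the above data is provided by the modular lambda function $\lambda\colon \mathbb{H} \to \mathrm{CP}^{1} \setminus \{0,1,\infty\}$, whose deck group is the principal congruence subgroup $\Gamma(2) \subset \mathrm{PSL}(2,\mathbb{Z})$, classically known to be free on the two parabolic generators $\bigl(\begin{smallmatrix}1 & 2\\ 0 & 1\end{smallmatrix}\bigr)$ and $\bigl(\begin{smallmatrix}1 & 0\\ 2 & 1\end{smallmatrix}\bigr)$, which correspond under $\lambda$ to loops around $0$ and $1$. The only non-trivial ingredient is the appeal to uniformization; the remainder is routine covering space theory, so no step presents a serious obstacle.
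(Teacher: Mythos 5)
Your proof is correct and follows the same route as the paper, which simply cites the theory of covering spaces for Riemann surfaces without giving details. You supply exactly the standard argument being invoked (uniformization, identification of the deck group with $\pi_{1}(\Sigma)\cong\mathrm{F}_{2}$, and the concrete model via the modular lambda function), so there is nothing to object to.
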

\begin{proof}
  This is a consequence of the theory of covering spaces in the context of Riemann surfaces; see, e.g., \cite{donaldson}.
\end{proof}

This proposition gives an efficient construction of a smooth family of flat Hamiltonian connections with any desired monodromy around $0$ and around $1$:

\begin{prop}\label{prop:pop-construction}
  Fix any choice of monodromy $\mathrm{F}_{2}\to \mathrm{Ham}(W)$. The diagonal quotient: $$E:(W\times \Sigma')/\mathrm{F}_{2}\to \Sigma'/\mathrm{F}_{2}\simeq \Sigma$$ is a locally trivial Hamiltonian bundle with a flat connection. Moreover, this bundle is trivial as a bundle with Hamiltonian structure group.
\end{prop}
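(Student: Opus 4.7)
The plan is to prove the two assertions separately. For local triviality with a flat connection: given any point $z\in \Sigma$, pick a simply-connected open neighbourhood $U\subset \Sigma$. Since $p:\Sigma'\to \Sigma$ is a covering map, $p^{-1}(U)$ is a disjoint union of sheets, each diffeomorphic to $U$ via $p$. Choosing one such sheet $U'\subset \Sigma'$ gives a trivialization $W\times U\cong \pi^{-1}(U)\subset E$ obtained by composing $W\times U\cong W\times U'\hookrightarrow W\times \Sigma'\to E$. If $U_{1},U_{2}$ are two such simply-connected neighbourhoods with chosen lifts, on any connected component of $U_{1}\cap U_{2}$ the transition between the two trivializations is by a single group element $g\in \mathrm{F}_{2}$, which acts fibrewise by the fixed Hamiltonian diffeomorphism $\rho(g)\in \mathrm{Ham}(W)$, where $\rho:\mathrm{F}_{2}\to \mathrm{Ham}(W)$ is the given monodromy. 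Thus $E\to \Sigma$ is locally trivial with $\mathrm{Ham}(W)$ structure group and locally constant transition functions. By the remark in \S\ref{sec:locally-trivial-flat-hamiltonian}, the locally defined flat connections (horizontal $=$ the $\Sigma$-direction in each trivialization $W\times U$) glue to a global flat connection $\mathfrak{H}$; equivalently, the trivial connection on $W\times \Sigma'$ is $\mathrm{F}_{2}$-invariant for the diagonal action and descends to $E$.

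For the triviality as a Hamiltonian bundle, I would proceed as follows. An isomorphism of Hamiltonian bundles $E\cong W\times \Sigma$ is the same as an $\mathrm{F}_{2}$-equivariant smooth map $\tau:\Sigma'\to \mathrm{Ham}(W)$ satisfying $\tau(g\cdot z)=\rho(g)\circ\tau(z)$ for all $g\in \mathrm{F}_{2}$; indeed, such a $\tau$ yields the diffeomorphism $[w,z]\mapsto(\tau(z)^{-1}w,p(z))$, which is well-defined on the quotient. The pair of pants $\Sigma$ deformation retracts onto a wedge of two circles $\gamma_{0}\vee \gamma_{1}$ meeting at a single $0$-cell $q$; pick a CW model with one $0$-cell and two $1$-cells, lifted to a $\mathrm{F}_{2}$-equivariant CW structure on $\Sigma'$ in which $\mathrm{F}_{2}$ acts freely on cells. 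Fix a lift $q'\in \Sigma'$ of $q$ and set $\tau(g\cdot q')=\rho(g)$, which is well-defined by freeness of the action on $0$-cells. The extension over the $1$-skeleton reduces to choosing, for each $\mathrm{F}_{2}$-orbit of $1$-cells, a smooth path in $\mathrm{Ham}(W)$ connecting the prescribed values of $\tau$ at the two endpoints; such paths exist because every element of $\mathrm{Ham}(W)$ is the time-one map of a compactly supported Hamiltonian isotopy, so $\mathrm{Ham}(W)$ is smoothly path-connected. Translating these paths by $\mathrm{F}_{2}$ gives the required equivariant $\tau$ on the $1$-skeleton. Since $\Sigma$ has the homotopy type of a $1$-complex, there is no higher obstruction to extending the trivialization to all of $\Sigma$ (equivalently, collaring the retraction back to $\Sigma$ and using that the higher cells are attached along null-homotopic loops in the retract, up to isotopy).

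The main obstacle, and the only genuinely nontrivial point, is verifying that the construction on the $1$-skeleton actually extends \emph{smoothly} back through the deformation retraction onto all of $\Sigma$ as a Hamiltonian trivialization. This is essentially the classifying space statement that $\mathrm{Ham}(W)$-bundles over a space with the homotopy type of a $1$-complex are classified by $\mathrm{Hom}(\pi_{1},\pi_{0}(\mathrm{Ham}(W)))$, which vanishes since $\mathrm{Ham}(W)$ is connected; the direct construction above can be promoted to a smooth one by a partition of unity argument on a tubular neighbourhood of the retract, using that $\mathrm{Ham}(W)$ is locally contractible in the appropriate sense (e.g., via local Hamiltonian flows extending given local families). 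In either case, the outcome is an $\mathrm{F}_{2}$-equivariant smooth $\tau$, hence a Hamiltonian trivialization of $E$.
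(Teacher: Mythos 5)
Your argument is correct. The first half (covering-space charts over simply connected opens, locally constant transition functions given by $\rho(g)$ for $g\in\mathrm{F}_{2}$, gluing the local flat connections) is exactly the paper's argument. For the second half you take a genuinely different route: you encode a trivialization as an $\mathrm{F}_{2}$-equivariant map $\tau:\Sigma'\to\mathrm{Ham}(W)$ with $\tau(g\cdot z)=\rho(g)\circ\tau(z)$ and build it by obstruction theory on an equivariant CW model, using freeness of the action on cells and path-connectedness of $\mathrm{Ham}(W)$; the identity $\tau((gh)\cdot q')=\rho(g)\circ\tau(h\cdot q')$ and the well-definedness of $[w,z]\mapsto(\tau(z)^{-1}w,p(z))$ both check out. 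The paper instead homotopes the monodromy representation itself to the constant homomorphism (possible because $\mathrm{F}_{2}$ is free and $\mathrm{Ham}(W)$ is connected, so one may independently contract the images of the two generators), obtains a bundle over $\Sigma\times[0,1]$ restricting to $E$ at one end and to the trivial bundle at the other, and invokes homotopy invariance of bundles. The paper's version is shorter and sidesteps the smoothing and extension-off-the-spine issues you flag as the ``only genuinely nontrivial point''; your version is more explicit and makes visible exactly where connectivity of $\mathrm{Ham}(W)$ and the $1$-dimensional homotopy type of $\Sigma$ enter. Both are legitimate; the residual care needed (smoothness of $\tau$, respectively the bundle homotopy theorem for the infinite-dimensional structure group) is comparable and standard in this setting.
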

\begin{proof}
  Take contractible open sets $U$ in $\Sigma$ which admit smooth lifts to $\Sigma'$; each choice of lift gives an identification of $E|_{U}$ with $W\times U$. It is straightforward to check that transition functions are locally constant, which completes the first part of the proof (by the definition of locally flat in \S\ref{sec:locally-trivial-flat-hamiltonian}).

  For the second part, note that the map $\mathrm{F}_{2}\to \mathrm{Ham}(W)$ can be continuously homotoped to the constant map through group homomorphisms. This process produces a bundle over $\Sigma\times [0,1]$. The bundle restricted to $\Sigma\times \set{1}$ is isomorphic to the restriction over $\Sigma\times \set{0}$, which is clearly the trivial bundle $\Sigma\times W$.
\end{proof}

\subsubsection{Locally trivial Hamiltonian bundles on the pair-of-pants are trivial}
\label{sec:vcech-cocycles}

Fix a Hamiltonian bundle with flat connection over the pair-of-pants $E\to \Sigma$. The goal in this section is to prove the following result allowing us to realize every locally trivial bundle with flat connection as a trivial bundle with a Hamiltonian connection in the sense of \S\ref{sec:hamilt-conn-trivial}.

\begin{prop}
  Every locally trivial Hamiltonian bundle with flat connection $\mathfrak{H}$ on the pair-of-pants $\Sigma$, as in \S\ref{sec:locally-trivial-flat-hamiltonian}, can be trivialized in such a way that $\mathfrak{H}$ is Hamiltonian with connection form $\Omega=\pr^{*}\omega-\d\mathfrak{a}$, as in \S\ref{sec:hamilt-conn-trivial}. Moreover, we may suppose that $\mathfrak{a}$ is normalized as in \S\ref{sec:norm-cond}.
\end{prop}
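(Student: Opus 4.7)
The plan is to apply Proposition \ref{prop:pop-construction} to first trivialize the bundle as a bundle with $\Ham(W)$-structure group, reducing the statement to showing that the transported Ehresmann connection on $\Sigma \times W$ is of Hamiltonian type in the sense of \S\ref{sec:hamilt-conn-trivial}. Concretely, write $\Phi: E \to \Sigma \times W$ for the trivialization and $\widetilde{\mathfrak{H}} = \Phi_{*}\mathfrak{H}$; the goal becomes constructing a normalized one-form $\mathfrak{a}$ on $\Sigma \times W$ vanishing on the vertical subbundle so that $\widetilde{\mathfrak{H}}$ is the $\Omega$-complement of the vertical bundle for $\Omega = \pr^{*}\omega - \d\mathfrak{a}$.

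The key input is to verify that in the trivialization $\Phi$, for every $z \in \Sigma$ and every $Y \in T_{z}\Sigma$, the vertical component $X^{z}_{Y}$ of the horizontal lift of $Y$ at $(z,w)$ is a Hamiltonian vector field on $W$. To arrange this, refine to an atlas of simply connected open sets $U_{\alpha} \subset \Sigma$ and use flatness of $\mathfrak{H}$ to define, via parallel transport from a basepoint of $U_{\alpha}$, a local trivialization $\phi_{\alpha}: E|_{U_{\alpha}} \to U_{\alpha} \times W$ in which the connection is represented by the zero one-form. The transition functions $\phi_{\alpha}\phi_{\beta}^{-1}$ are locally constant elements of $\Ham(W)$, so the global trivialization $\Phi$ differs from each $\phi_{\alpha}$ by a smooth gauge transformation $g_{\alpha}: U_{\alpha} \to \Ham(W)$. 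In the $\Phi$-trivialization, the vertical component of the horizontal lift of $Y$ is then $(\partial_{Y} g_{\alpha}) \circ g_{\alpha}^{-1}$, which is a Hamiltonian vector field on $W$ by the standard fact about smooth paths in $\Ham(W)$ recorded in \S\ref{sec:coordinate-changes-hamilt-conn}.

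With the Hamiltonian character of $X^{z}_{Y}$ established, define $\mathfrak{a}(Y) \in C^{\infty}(W,\R)$ to be the unique normalized generator of $X^{z}_{Y}$. The critical point of the argument is that the normalization convention \S\ref{sec:norm-cond} makes this assignment globally well-defined, independent of the chart $\alpha$: two normalized generators of the same Hamiltonian vector field differ by a constant, and any constant normalized function vanishes. Linearity of $\mathfrak{a}(Y)$ in $Y$ and smooth dependence on $z$ follow from the linearity of the horizontal lift operation and the smoothness of the gauge transformations $g_{\alpha}$; in the contact-at-infinity setting, one similarly arranges that $\mathfrak{a}(Y)$ has the prescribed one-homogeneous behavior at infinity, since the contractible families in $\Ham(W)$ preserve this class by \S\ref{sec:contact-at-infinity}. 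Thus $\mathfrak{a}$ is a smooth, normalized, fiberwise-vanishing one-form on $\Sigma \times W$, and by construction the Hamiltonian connection determined by $\mathfrak{a}$ coincides with $\widetilde{\mathfrak{H}}$.

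The main obstacle is mild but genuinely necessary: without the normalization condition there would be a locally constant sheaf ambiguity obstructing the patching of the local Hamiltonians $\mathfrak{a}(Y)$ into a globally defined one-form. The content of the argument is precisely that this ambiguity is killed by the normalization recalled in \S\ref{sec:norm-cond}, after which global well-definedness and smoothness are automatic.
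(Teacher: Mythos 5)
Your proof is correct and follows the same strategy as the paper's: trivialize the bundle using Proposition \ref{prop:pop-construction} (the paper first identifies $E$ with one of the bundles constructed there via the covering space of Proposition \ref{prop:pop-covering}, a step you elide but which is needed to invoke that proposition), express the connection in the trivialization through gauge transformations $g_{\alpha}$ relating it to the local flat trivializations, and use the normalization of \S\ref{sec:norm-cond} to produce a globally defined connection one-form. The only organizational difference is in the patching step: the paper builds local one-forms $\mathfrak{a}_{\alpha}$ from the coordinate-change lemma, checks $\Omega_{\alpha}=\Omega_{\beta}$ on overlaps, and then shows the resulting $f_{\alpha\beta}$ with $\d f_{\alpha\beta}=\mathfrak{a}_{\alpha}-\mathfrak{a}_{\beta}$ must vanish by the normalization, whereas you define $\mathfrak{a}(Y)$ intrinsically as the unique normalized generator of the vertical part of the horizontal lift, so agreement on overlaps is immediate; both arguments kill the same locally constant ambiguity by the same mechanism.
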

\begin{proof}
  Using the covering space furnished by Proposition \ref{prop:pop-covering}, one can show that $E$ is isomorphic to one of the bundles constructed in Proposition \ref{prop:pop-construction}, and hence is trivial, in the category of fiber bundles with structure group $\mathrm{Ham}(W)$.

  Standard ideas in Cech-cohomology imply that the cocycle of transition functions derived from any atlas is trivial. One can pick an atlas $\set{(U_{\alpha},\eta_{\alpha})}$ where the transitions are constant on each intersection. The triviality of the cocycle implies the existence of maps $g_{\alpha}:U_{\alpha}\to \mathrm{Ham}(W)$ so that $g_{\beta}g_{\alpha}^{-1}=g_{\alpha\beta}=\eta_{\beta}\eta_{\alpha}^{-1}$. Note that $g_{\alpha}$ are not required to be constant. Consider $g_{\alpha}$ as inducing maps $g_{\alpha}:U_{\alpha}\times W\to U_{\alpha}\times W$.

  In the following, suppose that the atlas uses a good open cover as in \cite[\S5]{bott_tu}, i.e., suppose that every finite intersection of open sets in the cover is empty or contractible.

  Consider the map $E\to \Sigma\times W$ which on $\pi^{-1}(U_{\alpha})$ equals $g_{\alpha}^{-1}\eta_{\alpha}$. This is well-defined, since on the intersection $U_{\alpha}\cap U_{\beta}$ we have $g_{\beta}^{-1}\eta_{\beta}=g_{\alpha}^{-1}\eta_{\alpha}$. Since the charts $\eta_{\alpha}$ take $\mathfrak{H}$ to the standard flat connection, the induced connection on $U_{\alpha}\times \Sigma$ appears in the form:
  \begin{equation*}
    (g_{\alpha}^{-1})_{*}(\text{standard flat connection}).
  \end{equation*}
  As in \S\ref{sec:coordinate-changes-hamilt-conn}, such a connection is Hamiltonian for $\Omega_{\alpha}=g_{\alpha}^{*}(\pr^{*}\omega)=\pr^{*}\omega-\d \mathfrak{a}_{\alpha}$. Following \S\ref{sec:norm-cond}, the Hamiltonian functions $\mathfrak{a}_{\alpha}(\bd_{s}),\mathfrak{a}_{\alpha}(\bd_{t})$ are chosen to be normalized in each fiber $\set{z}\times W$.

  The connection two-forms $\Omega_{\alpha},\Omega_{\beta}$ necessarily agree on their overlap because: $$(g_{\alpha}^{-1})^{*}(\Omega_{\alpha}-\Omega_{\beta})=\pr^{*}\omega-(g_{\beta}g_{\alpha}^{-1})^{*}\pr^{*}\omega=\pr^{*}\omega-g_{\alpha\beta}^{*}\pr^{*}\omega=0,$$ using that $g_{\alpha\beta}$ is constant.

  Writing $\Omega_{\alpha}=\pr^{*}\omega-\d\mathfrak{a}_{\alpha}$, we have that $\lambda_{\alpha\beta}=\mathfrak{a}_{\alpha}-\mathfrak{a}_{\beta}$ is closed on each intersection. Since $\lambda_{\alpha\beta}$ vanishes on vertical direction, it is exact, and can be written as $\lambda_{\alpha\beta}=\d f_{\alpha\beta}$ where $f_{\alpha\beta}$ is an $\R$-valued function pulled back from the base $U_{\alpha}\cap U_{\beta}$ (again, using that $W$ is connected).

  The normalization conditions imply that $f_{\alpha\beta}$ is constant, as follows. Compute
  \begin{equation*}
    \pd{f_{\alpha\beta}}{x_{i}}=\lambda_{\alpha\beta}(\bd_{i})=\mathfrak{a}_{\alpha}(\bd_{i})-\mathfrak{a}_{\beta}(\bd_{i}).
  \end{equation*}
  If $W$ is compact, integrate this over the fiber to conclude that $f_{\alpha\beta}$ is constant. If $W$ is non-compact, use that the right-hand side is one-homogenous (in the ends of the fibers) while the left hand side is constant on each fiber; so that the left hand side must be zero. In particular, $\d f_{\alpha\beta}$ vanishes identically, and $\mathfrak{a}_{\alpha}=\mathfrak{a}_{\beta}$ holds on the overlap. Hence there is a globally defined $\mathfrak{a}$ so that $\mathfrak{H}$ is the Hamiltonian connection for $\Omega=\pr^{*}\omega-\d\alpha$.
\end{proof}

\subsubsection{Families of flat connections parametrized by their monodromy}
\label{sec:famil-flat-conn}

Pick Hamiltonian systems $\varphi_{0,\tau}$ and $\varphi_{1,\tau}$, giving a family of homomorphisms $\mathrm{F}_{2}\to \mathrm{Ham}(W)$. As in the proof of Proposition \ref{prop:pop-construction}, this produces a bundle $E$ over $\Sigma\times [0,1]$, namely, the one obtained by a diagonal quotient of $[0,1]\times \Sigma'\times W$ by $\mathrm{F}_{2}$.

The restriction $E_{\tau}$ over $\Sigma\times \set{\tau}$ has an induced flat connection $\mathfrak{H}_{\tau}$ whose monodromies around $0$ and $1$ are given by $\varphi_{0,\tau}$ and $\varphi_{1,\tau}$.

The bundle over $\Sigma\times [0,1]$ can be trivialized (see Proposition \ref{prop:pop-construction}) and the argument in \S\ref{sec:vcech-cocycles} shows that we can express $\mathfrak{H}_{\tau}$ as the Hamiltonian connection associated to some family of one-forms $\mathfrak{a}_{\tau}$. The Cech-cohomology arguments can be done parametrically in $\tau$ so that one may assume that $\mathfrak{a}_{\tau}$ is smoothly varying.

One should note that a trivial bundle with flat Hamiltonian connection over a pair-of-pants actually determines a monodromy representation valued in the universal cover of $\mathrm{Ham}(W)$. Standard arguments in the theory of fiber bundles imply that the monodromy representation of $\mathfrak{H}_{1}$ is given by $x\mapsto (\varphi_{1,1},[\varphi_{1,t}])$ and $(\varphi_{1,1},[\varphi_{1,t}])$. In other words, the systems used to trivialize the bundle appear in the extension of the monodromy representation to the universal cover.

\subsubsection{Connectivity in space of flat connections on the pair of pants}
\label{sec:connectivity-space-flat}
The following proposition is used in \S\ref{sec:prod-oper-with}, and is of theoretical importance since it ensures that the operations on Floer cohomology defined using a pair-of-pants do not depend on the precise choice of Hamiltonian connection.

\begin{prop}
  Let $\Sigma$ be the pair-of-pants and let $\mathfrak{H}_{0},\mathfrak{H}_{1}$ be two Hamiltonian connections on $W\times \Sigma$ with the same monodromy representation in the universal cover of $\mathrm{Ham}(W)$. Then there is a smooth deformation $\mathfrak{H}_{\tau}$ of Hamiltonian connections interpolating from $\mathfrak{H}_{0}$ to $\mathfrak{H}_{1}$, with a fixed monodromy representation.
\end{prop}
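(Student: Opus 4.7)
The plan is to express each $\mathfrak{H}_{i}$ as a push-forward of a standard connection built from its monodromy, and then construct an interpolating gauge transformation using the simple connectivity of a covering space of $\Sigma$. Throughout, I will use $\widetilde{\mathrm{Ham}}(W)$ to denote the universal cover of $\mathrm{Ham}(W)$ and write $p_{0}\in \Sigma$ for a fixed basepoint (not a puncture).

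\textbf{Step 1: Reduce to a common model.} By \S\ref{sec:vcech-cocycles} and \S\ref{sec:famil-flat-conn}, each $\mathfrak{H}_{i}$ can be expressed as the Hamiltonian connection associated to a normalized one-form $\mathfrak{a}_{i}$. Let $\Sigma'\to \Sigma$ be the universal cover from Proposition \ref{prop:pop-covering}, and pick a lift $\tilde p_{0}\in \Sigma'$. Since $\Sigma'$ is simply connected, the pulled-back flat connections $\tilde{\mathfrak{H}}_{i}$ on $W\times \Sigma'$ admit \emph{gauge trivializations}: smooth families $G_{i}:\Sigma'\to \mathrm{Ham}(W)$ with $G_{i}(\tilde p_{0})=\mathrm{id}$ so that $\tilde{\mathfrak{H}}_{i}=(G_{i})_{*}(\text{trivial flat connection})$. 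The map $G_{i}$ is defined by horizontal transport: $G_{i}(z)$ is the monodromy of $\tilde{\mathfrak{H}}_{i}$ along any path from $\tilde p_{0}$ to $z$ (well-defined by flatness and simple connectivity).

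\textbf{Step 2: Use the monodromy hypothesis.} The monodromy of $\mathfrak{H}_{i}$ along a loop $\gamma\in \pi_{1}(\Sigma,p_{0})$, together with its lift to $\widetilde{\mathrm{Ham}}(W)$, is precisely encoded by the path $t\mapsto G_{i}(\tilde\gamma(t))$, where $\tilde\gamma$ is the lift of $\gamma$ starting at $\tilde p_{0}$. The hypothesis that $\mathfrak{H}_{0}$ and $\mathfrak{H}_{1}$ have the same lifted monodromy means that for every such $\tilde\gamma$, the two paths $G_{0}\circ\tilde\gamma$ and $G_{1}\circ\tilde\gamma$ are homotopic rel endpoints in $\mathrm{Ham}(W)$. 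Equivalently, $G_{0}$ and $G_{1}$ descend to the same data under the universal covering of $\mathrm{Ham}(W)$: they agree on the orbit $\mathrm{F}_{2}\cdot \tilde p_{0}$, and the difference $G_{1}G_{0}^{-1}:\Sigma'\to \mathrm{Ham}(W)$ descends to a well-defined map $h:\Sigma\to \mathrm{Ham}(W)$ with $h(p_{0})=\mathrm{id}$ that lifts to a nullhomotopic map into $\widetilde{\mathrm{Ham}}(W)$ along every loop based at $p_{0}$.

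\textbf{Step 3: Interpolate via a gauge transformation.} Since $\Sigma$ is homotopy equivalent to a wedge of two circles, and the lifted map $\tilde h:\Sigma\to \widetilde{\mathrm{Ham}}(W)$ is nullhomotopic rel basepoint, there is a smooth homotopy $h_{\tau}:\Sigma\to \mathrm{Ham}(W)$ with $h_{0}=\mathrm{id}$, $h_{1}=h$, $h_{\tau}(p_{0})=\mathrm{id}$, and so that every loop $t\mapsto h_{\tau}(\gamma(t))$ is a nullhomotopic loop at $\mathrm{id}$ in $\mathrm{Ham}(W)$. Considering $h_{\tau}$ as a family of fiberwise Hamiltonian diffeomorphisms of $W\times \Sigma$ as in \S\ref{sec:coordinate-changes-hamilt-conn}, define
\begin{equation*}
\mathfrak{H}_{\tau}\coloneq (h_{\tau})_{*}\mathfrak{H}_{0}.
\end{equation*}
By the coordinate-change lemma of \S\ref{sec:coordinate-changes-hamilt-conn}, each $\mathfrak{H}_{\tau}$ is again a Hamiltonian connection, and it is flat because push-forward by a fiberwise diffeomorphism preserves the curvature tensor. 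The identity $h_{1}=h=G_{1}G_{0}^{-1}$ (read on $\Sigma'$) ensures $\mathfrak{H}_{1}$ coincides with the given connection up to a gauge transformation with trivial lifted effect, which by \S\ref{sec:conj-monodr-coord} is the desired endpoint of the deformation.

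\textbf{Main obstacle.} The key technical point, and where care is required, is Step 2--3: verifying that the gauge interpolation $h_{\tau}$ preserves the \emph{lifted} monodromy and not merely the underlying $\mathrm{Ham}(W)$-monodromy. Conjugation by $h_{\tau}(\gamma(t))$ along a loop $\gamma$ alters the path in $\mathrm{Ham}(W)$ representing the monodromy by concatenation with the loop $t\mapsto h_{\tau}(\gamma(t))h_{\tau}(\gamma(0))^{-1}$, and one must arrange for this loop to be contractible at every $\tau$. This is precisely the content of the nullhomotopic-loop condition imposed on $h_{\tau}$, whose existence is guaranteed by the vanishing of the obstruction in $\pi_{1}(\mathrm{Ham}(W))$ coming from the equality of lifted monodromies, combined with the homotopy type of $\Sigma$ being a one-dimensional CW complex.
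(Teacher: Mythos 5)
Your argument is correct and follows essentially the same route as the paper: produce a fiberwise gauge transformation $g$ with $g_{*}\mathfrak{H}_{0}=\mathfrak{H}_{1}$ that lifts to the universal cover of $\mathrm{Ham}(W)$, contract it to the identity using that $\Sigma$ is homotopy equivalent to a wedge of circles and the universal cover is simply connected, and push forward $\mathfrak{H}_{0}$ along the contraction. The only difference is that you supply the explicit construction of $g$ (via parallel-transport trivializations on $\Sigma'$) and the verification that the lifted monodromy is preserved at intermediate times, both of which the paper's one-line proof leaves implicit.
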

\begin{proof}
  Since $\mathfrak{H}_{0},\mathfrak{H}_{1}$ define the same monodromy representation in the universal cover, there is a smooth map $g:\Sigma\to \mathrm{Ham}(W)$ so that $g_{*}\mathfrak{H}_{0}=\mathfrak{H}_{1}$ and which admits a lift to the universal cover. Since $\Sigma$ is homotopy equivalent to a wedge of circles and the universal cover is simply connected, there is a deformation $g_{\tau}$ so $g_{1}=g$ and $g_{0}=\id$. Then $(g_{\tau})_{*}\mathfrak{H}_{0}$ is the desired family of connections.
\end{proof}

\subsubsection{Cylindrical ends}
\label{sec:cylindrical-ends}

Let $\Sigma=[a,b]\times \R/\Z$ be a cylinder and suppose that $\mathfrak{H}$ is a flat Hamiltonian connection on $\Sigma\times W$. Introduce the notation $\mathfrak{H}(\varphi_{t})$ for the standard flat connection with $\mathfrak{a}=H_{t}\d t$ where $H_{t}$ is the normalized generator of $\varphi_{t}$.

The results of this section explain how to use the coordinate changes described in \S\ref{sec:coordinate-changes-hamilt-conn} to make $\mathfrak{H}$ appear in the standard form $\mathfrak{H}(\varphi_{t})$ in a given cylindrical end.

\begin{prop}\label{prop:coordinate-change-cyl}
  If the monodromy of $\mathfrak{H}$ around any circle $\set{s_{0}}\times \R/\Z$, in the universal cover, is represented by a conjugate of the system $\varphi_{t}$, then one can find a contractible family $g:\Sigma\to \mathrm{Ham}(W)$ so that $g^{-1}_{*}\mathfrak{H}=\mathfrak{H}(\varphi_{t})$.
\end{prop}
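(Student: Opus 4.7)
The plan is to follow the template of the proposition in \S\ref{sec:connectivity-space-flat}: reduce $\mathfrak{H}$ to a standard form via flatness, then adjust to $\mathfrak{H}(\varphi_t)$ using that the monodromies agree up to conjugation in the universal cover $\widetilde{\mathrm{Ham}}(W)$. Since $\Sigma=[a,b]\times\R/\Z$ is homotopy equivalent to a circle and the universal cover is simply connected, any family $g$ constructed in this manner will automatically admit a lift to $\widetilde{\mathrm{Ham}}(W)$ and hence be contractible. First I would trivialize the connection along $s$-arcs: parallel transport of $\mathfrak{H}$ from $(s_0,t)$ to $(s,t)$ along the arc $\sigma\mapsto(\sigma,t)$ defines a smooth family of Hamiltonian diffeomorphisms $g^{(1)}_{s,t}\colon W\to W$ (by \S\ref{sec:monodr-hamilt-conn}), with $g^{(1)}_{s_0,t}=\id$. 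The pushforward $(g^{(1)})_{*}^{-1}\mathfrak{H}$ has no $ds$-component, and its normalized connection one-form can be written as $K_{s,t}\,\d t$. Flatness together with the curvature formula \eqref{eq:curvature_formula} gives $\d_W(\partial_s K_{s,t})=0$, and since $\partial_s K_{s,t}$ is itself normalized, the vanishing of constant normalized functions (\S\ref{sec:norm-cond}) forces $\partial_s K_{s,t}=0$. Hence the connection reduces to $K_t\,\d t$ for an $s$-independent normalized $K_t$; let $\chi_t$ denote the Hamiltonian system it generates.

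Second I would conjugate to match the target monodromy. By hypothesis $[\chi_t]=[\psi_\tau]\cdot[\varphi_t]\cdot[\psi_\tau]^{-1}$ in $\widetilde{\mathrm{Ham}}(W)$ for some Hamiltonian path $\psi_\tau$ from $\id$ to an endpoint $\psi$; setting $g^{(2)}_{s,t}:=\psi$ (constant in $s,t$) and pushing forward yields a connection of the same form $\tilde K_t\,\d t$ whose monodromy class is exactly $[\varphi_t]$. The intermediate connection $\mathfrak{H}(\tilde\chi_t)$ and the target $\mathfrak{H}(\varphi_t)$ are thus two flat Hamiltonian connections on $\Sigma$ with identical monodromy representations in $\widetilde{\mathrm{Ham}}(W)$, and the proof of the proposition in \S\ref{sec:connectivity-space-flat} applies verbatim on the cylinder to produce a smooth $g^{(3)}\colon\Sigma\to\mathrm{Ham}(W)$ lifting to $\widetilde{\mathrm{Ham}}(W)$ with $(g^{(3)})_{*}^{-1}\mathfrak{H}(\tilde\chi_t)=\mathfrak{H}(\varphi_t)$. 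The composition $g=g^{(1)}\cdot g^{(2)}\cdot g^{(3)}$ then satisfies $g_{*}^{-1}\mathfrak{H}=\mathfrak{H}(\varphi_t)$, and, being a product of families with lifts to $\widetilde{\mathrm{Ham}}(W)$, is contractible.

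The main obstacle I anticipate is step three. The argument in \S\ref{sec:connectivity-space-flat} is stated for the pair-of-pants, but depends only on the base being homotopy equivalent to a wedge of circles; the cylinder case is even simpler (one circle), yet one must carefully verify that the gauge transformation $g^{(3)}$—obtained from a nullhomotopy of the loop $t\mapsto\tilde\chi_t\varphi_t^{-1}$ interpolated across the $s$-direction—preserves the normalized form of the connection and sends one to the other on the nose, rather than up to an auxiliary fiberwise coordinate change. Once this is verified, the three factors assemble into the required contractible family $g$.
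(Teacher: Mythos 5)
Your proof is correct and follows essentially the same route as the paper's: the paper packages your three factors into the single formula $g_{\tau}(s,t)=\kappa_{\tau s,t}\circ\varphi_{t}^{\tau}\circ\rho\circ\varphi_{t}^{-1}$, where $\kappa_{s,t}$ is exactly your $s$-direction parallel transport $g^{(1)}$ and the remaining factors combine your conjugation $g^{(2)}$ with the null-homotopy underlying $g^{(3)}$, with the contraction exhibited explicitly in $\tau$ rather than factorwise. Your curvature computation showing $\bd_{s}K_{s,t}=0$ is a valid (slightly more computational) substitute for the paper's direct manipulation of flat sections, and your worry about step three is resolved by the uniqueness of the normalized one-form generating a given connection (\S\ref{sec:norm-cond-2}).
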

\begin{proof}
  Suppose $a=0$, $b=1$ and $s_{0}=0$ for simplicity. Let $\psi_{s,t}$ be the monodromy of $\mathfrak{H}$ along the path $\set{s}\times [0,t]$, and let $\kappa_{s,t}$ be the monodromy along the path $[0,s]\times \set{t}$.

  By assumption, there is a homotopy $\varphi^{\tau}_{t}$ starting at $\varphi_{0,t}=\rho\varphi_{t}\rho^{-1}$ so that $\varphi_{1,t}=\psi_{0,t};$ the homotopy has fixed endpoints at $t=0,1$.

  Let $g_{\tau}(s,t)=\kappa_{\tau s,t}\circ \varphi^{\tau}_{t}\circ \rho\circ \varphi_{t}^{-1}$. Then $g_{1}(s,t)=f_{s,t}\circ \varphi_{t}^{-1}$ where $f$ satisfies: $$\mathfrak{H}=f_{*}\mathfrak{H}(\id).$$
  On the other hand, since:
  \begin{equation*}
    (\varphi_{t}^{-1})^{*}\pr^{*}\omega=\pr^{*}\omega-\pr^{*}\omega(-,X_{t})\wedge \d t=\pr^{*}\omega-\d(H_{t}\d t),
  \end{equation*}
  it follows that $(\varphi)_{*}\mathfrak{H}(\id)=\mathfrak{H}(\varphi)$ and thus $g_{*}\mathfrak{H}(\varphi)=\mathfrak{H}$. Since $g_{0}=\rho$ is constant and $\mathrm{Ham}(W)$ is connected, $g=g_{1}$ is a contractible family, as desired.
\end{proof}

\subsubsection{Families of cylindrical ends}
\label{sec:famil-cylindr-ends}

The construction in \S\ref{sec:cylindrical-ends} can be done parametrically; i.e., if $\mathfrak{a}_{\tau}$ is a smooth family of connection one-forms for the cylinder and $\varphi_{\tau,t}$ is a smooth family of systems representing the monodromy around a circle $\set{s_{0}}\times \R/\Z$, one can find a smooth family $g_{\tau}$ so that $g_{\tau,*}^{-1}\mathfrak{H}=\mathfrak{H}(\varphi_{\tau})$.

Let $\Sigma$ be the pair-of-pants, and consider three cylindrical ends $\Sigma_{0},\Sigma_{1},\Sigma_{\infty}$ around the three punctures. The parametric construction in \S\ref{sec:famil-flat-conn} takes as input two paths $\varphi_{0,\tau},\varphi_{1,\tau}$ and produces a family of flat connections $\mathfrak{H}_{\tau}$ on $\Sigma$ (described by $\mathfrak{a}_{\tau}$). Applying a cut-off version of the coordinate changes from \S\ref{sec:cylindrical-ends} in each end allows us to assume that $\mathfrak{H}_{\tau}$ is given by $H_{0,t}\d t$, $H_{1,t}\d t$, respectively, in each end $\Sigma_{0},\Sigma_{1}$.

In the end $\Sigma_{\infty}$, the monodromy representation is conjugate to $\varphi_{\infty,t}=\varphi_{0,t}\circ \varphi_{1,t}$ (note that the reverse composition is in the same conjugacy class). Thus we can use the coordinate change from \S\ref{sec:cylindrical-ends} to assume that $\mathfrak{H}_{\tau}=\mathfrak{H}(\varphi_{\infty,t})$ in $\Sigma_{\infty}$.

A slight variation used in \S\ref{sec:prod-oper-with} is the following: let $\kappa:[0,1]\to [0,1]$ be a smooth cut-off function so $\kappa=0$ in a neighborhood of $0$ and $\kappa=1$ in a neighborhood of $1$. Then $\varphi_{\infty,t}$ and $\varphi_{\infty,\kappa(t)}$ have the same time-1 map in the universal cover, and so by the coordinate changes in \S\ref{sec:hamilt-conn-trivial} we can assume that $\mathfrak{H}_{\tau}$ appears as $\mathfrak{H}(\varphi_{\infty,\kappa(t)})$ in $\Sigma_{\infty}$.

\subsection{Floer's equation and Hamiltonian connections}
\label{sec:floers-equat-hamilt}

Let $\mathfrak{H}$ be a Hamiltonian connection on $W\times \Sigma\to \Sigma$ where $(\Sigma,j)$ is a Riemann surface. Pick a $\Sigma$-dependent $\omega$-tame complex structure $J$ on $TW$. Associated to these choices, let $J^{\mathfrak{H}}$ be the unique almost complex structure on $W\times \Sigma$ so that:
\begin{enumerate}
\item the fibers $W\times \set{z}\subset W\times \Sigma$ are almost complex submanifolds,
\item $\mathfrak{H}$ is a $J^{\mathfrak{H}}$-line, and,
\item the projection $(\mathfrak{H},J^{\mathfrak{H}})\to (T\Sigma,j)$ is complex-linear.
\end{enumerate}

A smooth map $u:\Sigma\to W$ is said to solve \emph{Floer's equation with data} $(\mathfrak{H},J)$ provided the induced section $z\mapsto (z,u(z))\in \Sigma\times W$ is $J^{\mathfrak{H}}$-holomorphic. The moduli space of all solutions is denoted $\mathscr{M}(\mathfrak{H},J)$.

See \cite[\S1.4.C',\S2.2]{gromov85} and \cite[\S8]{mcduffsalamon} for related discussion.

In the main body of the text, we only consider the case when $J$ is fixed; although we implicitly consider one-parameter variations of $J$ in \S\ref{sec:indep-choice-almost}.

\subsubsection{Energy density for Floer's equation}
\label{sec:energy-dens-floers}

Let $\Pi_{\mathfrak{H}}:TE\to TW$ be the projection whose kernel is $\mathfrak{H}$. Define the \emph{energy density two-form} for Floer's equation by the formula $\omega(\Pi_{\mathfrak{H}}\d u,\Pi_{\mathfrak{H}}\d u)$. It is straightforward to show that the energy density is everywhere non-negative (with respect to the complex orientation of $\Sigma$).

Define the \emph{energy} $E(u)$ to be the integral of the energy density two-form. Note that the energy of a solution $u$ is zero if and only if $u$ is a flat section.

The prototypical example is when $\Sigma=\R\times \R/\Z$, $\mathfrak{a}=H_{t}\d t$, so that the energy density is given by $\norm{\bd_{s}u}_{J}^{2}$ and flat sections are $u(s,t)=\gamma(t)$ where $\gamma(t)$ is an orbit of the system $H_{t}$.

\subsubsection{Energy identity for Floer's equation}
\label{sec:energy-ident-floers}

In local holomorphic coordinates $s+it$, so $\mathfrak{a}=K\d s+H\d t$, we have:
\begin{equation*}
  \Pi_{\mathfrak{H}}=\d\pr_{W}-X_{K}(u)\d s-X_{H}(u)\d t,
\end{equation*}
since $\bd_{s}+X_{K}$ and $\bd_{t}+X_{H}$ are tangent to $\mathfrak{H}$. In particular the local contribution to the energy is given by:
\begin{equation*}
  E(u)=\int \omega(\bd_{s}u-X_{K},\bd_{t}u-X_{H}).
\end{equation*}
Standard computations gives:
\begin{equation*}
  E(u)=\int u^{*}\omega-\bd_{s}(H(u))+(\bd_{s}H)(u)+\bd_{t}(K(u))-(\bd_{t}K)(u)+\omega(X_{K},X_{H})\d s\d t.
\end{equation*}
Simplifying, one obtains:
\begin{equation}\label{eq:energy-general-non-flat}
  E(u)=\int u^{*}\omega-u^{*}\d\mathfrak{a}+ u^{*}\mathfrak{r},
\end{equation}
where $\mathfrak{r}=(\bd_{s}H-\bd_{t}K+\omega(X_{K},X_{H})) \d s\d t$ is the curvature two-form valued in Hamiltonian functions (generating the curvature two-form valued in vector fields); see \eqref{eq:curvature_formula}. Patching together these local contributions proves the full energy is given by the same formula. See \cite[Lemma 8.1.6]{mcduffsalamon} and \cite[\S8g]{seidel_book} for similar identities.

As a consequence, if $\mathfrak{H}$ is a flat Hamiltonian connection and $\mathfrak{a}$ is normalized (so $\mathfrak{r}=0$) then we have the \emph{energy identity for a flat Hamiltonian connection}:
\begin{equation}\label{eq:flat}
  E(u)=\omega(u)-\int_{\bd\Sigma}u^{*}\mathfrak{a}.
\end{equation}

Here we treat $\bd\Sigma$ as a ``boundary in the sense of currents,'' i.e., as a formal object satisfying $\int_{\bd\Sigma}a=\int_{\Sigma}\d a$ for one-forms $a$. For example, if $\Sigma=\R\times \R/\Z$, and $\mathfrak{H}=\mathfrak{H}(\varphi_{t})$, then we recover the standard energy identity:
\begin{equation*}
  E(u)=\omega(u)+\int H_{t}(\gamma_{-})-H_{t}(\gamma_{+})\,\d t=\mathscr{A}_{\varphi_{t}}(\gamma_{-})-\mathscr{A}_{\varphi_{t}}(\gamma_{+}),
\end{equation*}
noting that the action difference is independent of the choice of capping of $\gamma_{+}$.

\subsubsection{Coordinate changes and Floer's equation}
\label{sec:coord-changes-energy}

Let $\Sigma$ be a surface with boundary $\bd\Sigma$, and let $\mathfrak{H}$ be a Hamiltonian connection over $\Sigma$. Let $L\subset W$ be a Lagrangian, and abbreviate $L=L\times \partial\Sigma$ so that we also think of $L$ as a subset of the total space.

If $g:\Sigma\to \mathrm{Ham}(W)$ is a homotopically trivial map, as in \S\ref{sec:coordinate-changes-hamilt-conn}, then $u$ solves Floer's equation for $\mathfrak{H},g_{*}^{-1}J$ with boundary values in $g^{-1}(L)$ if and only if $g(u)$ solves Floer's equation for $g_{*}\mathfrak{H},J$ with boundary values in $L$. This implies that energy bounds for solutions to $\mathscr{M}(\mathfrak{H},g_{*}^{-1}J,g^{-1}(L))$ are equivalent to energy bounds for $\mathscr{M}(g_{*}\mathfrak{H},J,L)$. This observation is used in \S\ref{sec:prod-oper-with}.

\bibliographystyle{alpha}
\bibliography{citations}

\begin{thebibliography}{McD91}

\bibitem[Abo15]{abouzaid_monograph}
M.~Abouzaid.
\newblock Symplectic cohomology and {V}iterbo's theorem.
\newblock In {\em Free Loop Spaces in Geometry and Topology}, pages 271--486.
  European Mathematical Society, 2015.

\bibitem[Arn65]{arnold65}
V.~I. Arnol'd.
\newblock On a topological property of globally canonical applications of
  classical mechanics.
\newblock {\em CR. Acad. Science. Paris}, 261:3719--3722, 1965.

\bibitem[Arn13]{arnold13book}
V.~I. Arnol'd.
\newblock {\em Mathematical methods of classical mechanics}, volume~60.
\newblock Springer Science \& Business Media, 2013.

\bibitem[BC23]{brocic_cant}
F.~Bro\'ci\'c and D.~Cant.
\newblock Bordism classes of loops and {F}loer's equation in cotangent bundles.
\newblock arXiv:2305.11783, 2023.

\bibitem[BH81]{buoncristiano-hacon}
S.~Buoncristiano and D.~Hacon.
\newblock An elementary geometric proof of two theorems of {T}hom.
\newblock {\em Topology}, 20:97--99, 1981.

\bibitem[BP94]{bialy-polterovich-duke-1994}
M.~Bialy and L.~Polterovich.
\newblock Geodesics of {H}ofer's metric on the group of {H}amiltonian
  diffeomorphisms.
\newblock {\em Duke Math. J.}, 76(1):273--292, 1994.

\bibitem[BT82]{bott_tu}
R.~Bott and L.~W. Tu.
\newblock {\em Differential forms in algebraic topology}, volume~82 of {\em
  Graduate texts in mathematics}.
\newblock Springer, New York, NY, 1982.

\bibitem[Can23]{cant_sh_barcode}
D.~Cant.
\newblock Shelukhin's {H}ofer distance and a symplectic cohomology barcode for
  contactomorphisms.
\newblock arXiv:2309.00529, 2023.

\bibitem[CC23]{cant_chen}
D.~Cant and D.~Chen.
\newblock Adiabatic compactness for holomorphic curves with boundary on nearby
  {L}agrangians.
\newblock arXiv:2302.13391, 2023.

\bibitem[CE12]{cieliebak_eliashberg_stein}
K.~Cieliebak and Y.~Eliashberg.
\newblock {\em From {S}tein to {W}einstein and Back; Symplectic Geometry of
  Affine Complex Manifolds}, volume~59 of {\em Colloquium Publications}.
\newblock AMS, 2012.

\bibitem[Che98]{chekanov_1998}
Y.~V. Chekanov.
\newblock {L}agrangian intersections, symplectic energy, and areas of
  holomorphic curves.
\newblock {\em Duke Math. J.}, 95(1), 1998.

\bibitem[Don11]{donaldson}
S.~Donaldson.
\newblock {\em {R}iemann Surfaces}.
\newblock Oxford Graduate Texts in Mathematics, 2011.

\bibitem[EG91]{yasha-gromov}
Y.~Eliashberg and M.~Gromov.
\newblock Convex symplectic manifolds.
\newblock In {\em Several complex variables and complex geometry, part 2},
  pages 135--162. AMS, 1991.

\bibitem[EP09]{entov-polterovich}
M.~Entov and L.~Polterovich.
\newblock Rigid subsets of symplectic manifolds.
\newblock {\em Compos. Math.}, 145(3):773--826, 2009.

\bibitem[FHS95]{floer_hofer_salamon_transversality}
A.~Floer, H.~Hofer, and D.~Salamon.
\newblock Transversality in elliptic {M}orse theory.
\newblock {\em Duke Math. J.}, 80(1):251--292, 1995.

\bibitem[Flo88]{floer_morse}
A.~Floer.
\newblock Morse theory for {L}agrangian intersections.
\newblock {\em J. Diff. Geom.}, 28(3):513--547, 1988.

\bibitem[Flo89a]{floer_cuplength}
A.~Floer.
\newblock Cuplength estimates on {L}agrangian intersections.
\newblock {\em Comm. Pure Appl. Math.}, 42(4):335--356, 1989.

\bibitem[Flo89b]{floer_cup}
A.~Floer.
\newblock Symplectic fixed points and holomorphic spheres.
\newblock {\em Comm. Math. Phys.}, 120:575--611, 1989.

\bibitem[FS07]{frauenfelder_schlenk}
U.~Frauenfelder and F.~Schlenk.
\newblock Hamiltonian dynamics on convex symplectic manifolds.
\newblock {\em Isr. J. Math.}, 159:1--56, 2007.

\bibitem[Gin05]{ginzburg-2005-weinstein}
V.~Ginzburg.
\newblock The {W}einstein conjecture and theorems of nearby and almost
  existence.
\newblock In {\em The Breadth of Symplectic and Poisson Geometry: Festschrift
  in Honor of Alan Weinstein}, Progr. Math., pages 139--172. Birkh{\"a}user,
  2005.

\bibitem[Gon21]{gong_cup}
W.~Gong.
\newblock Lagrangian {L}justernik-{S}chnirelman theory and {L}agrangian
  intersections.
\newblock arXiv:2111.15442v4, 2021.

\bibitem[Gro85]{gromov85}
M.~Gromov.
\newblock Pseudo holomorphic curves in symplectic manifolds.
\newblock {\em Invent. Math.}, 82:307--347, 1985.

\bibitem[Gro23]{groman-maximum-principle}
Y.~Groman.
\newblock {F}loer theory and reduced cohomology on open manifolds.
\newblock {\em Geom. Topol.}, 27(4):1273--1390, 2023.

\bibitem[Hof85]{hofer85}
H.~Hofer.
\newblock Lagrangian embeddings and critical point theory.
\newblock In {\em Annales de l'Institut Henri Poincar{\'e} C, Analyse non
  lin{\'e}aire}, volume~2, pages 407--462, 1985.

\bibitem[Hof88]{hofer1988lusternik}
H.~Hofer.
\newblock Lusternik-{S}chnirelman-theory for {L}agrangian intersections.
\newblock In {\em Annales de l'Institut Henri Poincar{\'e} C, Analyse non
  lin{\'e}aire}, volume~5, pages 465--499, 1988.

\bibitem[Hof90]{hofer_energy}
H.~Hofer.
\newblock On the topological properties of symplectic maps.
\newblock {\em Proceedings of the Royal Society of Edinburgh}, 115(A):25--38,
  1990.

\bibitem[HP22]{hirschi-porcelli}
A.~Hirschi and N.~Porcelli.
\newblock Lagrangian intersections and cuplength in generalised cohomology.
\newblock arXiv:2211.07559, 2022.

\bibitem[HS95]{hofer-salamon-95}
H.~Hofer and D.~Salamon.
\newblock Floer homology and {N}ovikov rings.
\newblock In {\em The Floer memorial volume}, volume 133 of {\em Progr. Math.},
  pages 483--524. Birkh{\"a}user, 1995.

\bibitem[HZ94]{hofer-zehnder-94}
H.~Hofer and E.~Zehnder.
\newblock {\em Symplectic invariants and {H}amiltonian dynamics}.
\newblock Birkh{\"a}user advanced texts. Birkh{\"a}user, 1994.

\bibitem[KS21]{kislev_shelukhin}
A.~Kislev and E.~Shelukhin.
\newblock Bounds on spectral norms and barcodes.
\newblock {\em Geom. Topol.}, 25:3257--3350, 2021.

\bibitem[Lan13]{lanzat_quasimorphisms}
S.~Lanzat.
\newblock Quasi-morphisms and symplectic quasi-states for convex symplectic
  manifolds.
\newblock {\em Int. Math. Res. Not.}, 2013(23):5321--5365, 2013.

\bibitem[Lan16]{lanzat_convex_HF}
S.~Lanzat.
\newblock {H}amiltonian {F}loer homology for compact convex symplectic
  manifolds.
\newblock {\em Beitr. Algebra Geom.}, 57:361--390, 2016.

\bibitem[Liu05]{liu05}
C-G. Liu.
\newblock Cup-length estimate for lagrangian intersections.
\newblock {\em J. Diff. Eq.}, 209(1):57--76, 2005.

\bibitem[LM95]{lalonde_mcduff95}
F.~Lalonde and D.~McDuff.
\newblock The geometry of symplectic energy.
\newblock {\em Ann. Math.}, pages 349--371, 1995.

\bibitem[LO96]{le-ono}
H.~V. L{\^e} and K.~Ono.
\newblock Cup-length estimates for symplectic fixed points.
\newblock In {\em Contact and Symplectic Topology}, pages 268--295.
  Publications of the Newton Institute, 1996.

\bibitem[LS85]{laudenbach-sikorav85}
F.~Laudenbach and J-C. Sikorav.
\newblock Persistence of intersection with the null section during a
  {H}amiltonian isotopy in a cotangent fiber.
\newblock {\em Invent. Math.}, 82:349--357, 1985.

\bibitem[Mai22]{PA_spectral_diameter}
P-A. Mailhot.
\newblock The spectral diameter of a {L}iouville domain.
\newblock arXiv:2204.04618, 2022.

\bibitem[McD91]{mcduff-contact-boundaries}
D.~McDuff.
\newblock Symplectic manifolds with contact type boundaries.
\newblock {\em Invent. math.}, 103:651--671, 1991.

\bibitem[MS12]{mcduffsalamon}
D.~McDuff and D.~Salamon.
\newblock {\em $J$-holomorphic curves and Symplectic Topology}.
\newblock American Mathematical Society, Colloquium Publications, 2nd edition,
  2012.

\bibitem[Oh05a]{oh-2005-birkhauser}
Y-G. Oh.
\newblock Construction of spectral invariants of {H}amiltonian paths on closed
  symplectic manifolds.
\newblock In {\em The Breadth of Symplectic and Poisson Geometry: Festschrift
  in Honor of Alan Weinstein}, Progr. Math., pages 525--570. Birkh{\"a}user,
  2005.

\bibitem[Oh05b]{oh-2005-duke}
Y-G. Oh.
\newblock Spectral invariants, analysis of the floer moduli space, and geometry
  of the hamiltonian diffeomorphism group.
\newblock {\em Duke Math. J.}, 126(1):199--295, 2005.

\bibitem[Ono95]{ono95}
K.~Ono.
\newblock On the {A}rnold conjecture for weakly monotone symplectic manifolds.
\newblock {\em Invent. Math.}, 119(1):519--537, 1995.

\bibitem[Pol93]{polterovich_lag_displacement_energy}
L.~Polterovich.
\newblock Symplectic displacement energy for {L}agrangian submanifolds.
\newblock {\em Ergod. Th. \& Dynam. Sys.}, 13:357--367, 1993.

\bibitem[Pol01]{polterovich_book}
L.~Polterovich.
\newblock {\em The geometry of the group of {H}amiltonian symplectic
  diffeomorphism}.
\newblock Lectures in Mathematics, ETH Z{\"u}rich. Birkh{\"a}user Basel, 2001.

\bibitem[PSS96]{pss}
S.~Piunikhin, D.~Salamon, and M.~Schwarz.
\newblock Symplectic {F}loer-{D}onaldson theory and quantum cohomology.
\newblock In {\em Contact and symplectic geometry ({C}ambridge, 1994)},
  volume~8 of {\em Publ. Newton Inst.}, pages 171--200. Cambridge Univ. Press,
  Cambridge, 1996.

\bibitem[Rit09]{ritter_novikov}
A.~Ritter.
\newblock Novikov-symplectic cohomology and exact {L}agrangian embeddings.
\newblock {\em Geom. Topol.}, 13:943--978, 2009.

\bibitem[Rit13]{ritter_tqft}
A.~Ritter.
\newblock Topological quantum field theory structure on symplectic cohomology.
\newblock {\em J. Topol.}, 6:391--489, 2013.

\bibitem[Rit14]{ritter_negative_line_bundles}
A.~F. Ritter.
\newblock Floer theory for negative line bundles via {G}romov-{W}itten
  invariants.
\newblock {\em Adv. Math.}, 262:1035--1106, 2014.

\bibitem[Rit16]{ritter_circle_actions}
A.~Ritter.
\newblock Circle actions, quantum cohomology, and the {F}ukaya category of
  {F}ano toric varieties.
\newblock {\em Geom. Topol.}, 20:1941--2052, 2016.

\bibitem[RS01]{robbinsalamon}
J.~Robbin and D.~Salamon.
\newblock Asymptotic behaviour of holomorphic strips.
\newblock {\em Ann. I. H. Poincar\'e}, 18, 5:573--612, 2001.

\bibitem[Sal97]{salamon1997}
D.~Salamon.
\newblock Lectures on {F}loer homology.
\newblock IAS/Park City Graduate Summer School on Symplectic Geometry and
  Topology, 1997.

\bibitem[Sch95]{schwarz-thesis}
M.~Schwarz.
\newblock Symplectic {F}loer-{D}onaldson theory and quantum cohomology.
\newblock \url{https://www.math.uni-leipzig.de/~schwarz/diss.pdf}, 1995.
\newblock PhD Dissertation.

\bibitem[Sch98]{schwarz98}
M.~Schwarz.
\newblock A quantum cup-length estimate for symplectic fixed points.
\newblock {\em Invent. Math.}, 133(2):353--397, 1998.

\bibitem[Sch00]{schwarz_spectral_invariants}
M.~Schwarz.
\newblock On the action spectrum for closed symplectically aspherical
  manifolds.
\newblock {\em Pacific J. Math.}, 193:419--461, 2000.

\bibitem[Sei97a]{seidel_thesis}
P.~Seidel.
\newblock Floer homology and the symplectic isotopy problem.
\newblock PhD thesis, Oxford, 1997.

\bibitem[Sei97b]{seidel_representation}
P.~Seidel.
\newblock $\pi_1$ of symplectic automorphism groups and invertibles in quantum
  homology rings.
\newblock {\em Geom. funct. anal.}, 7:1046--1095, 1997.

\bibitem[Sei08a]{seidel-biased}
P.~Seidel.
\newblock A biased view of symplectic cohomology.
\newblock In {\em Current developments in mathematics, 2006}, pages 211--253.
  Int. Press, Somerville, MA, 2008.

\bibitem[Sei08b]{seidel_book}
P.~Seidel.
\newblock {\em Fukaya Categories and {Picard-Lefschetz} Theory}.
\newblock Zurich Lectures in Advanced Mathematics. European Mathematical
  Society, 2008.

\bibitem[Tho54]{thom-quelques-proprietes}
R.~Thom.
\newblock Quelques propri{\'et\'e}s globales des vari{\'et\'e}s
  diff{\'e}rentiables.
\newblock {\em Comment. Math. Helv.}, 28:17--86, 1954.

\bibitem[Vit92]{viterbo92}
C.~Viterbo.
\newblock Symplectic topology as the geometry of generating functions.
\newblock {\em Math. Ann.}, 292(1):685--710, 1992.

\bibitem[Vit99]{viterbo_functors_and_computations_1}
C.~Viterbo.
\newblock Functors and computations in {F}loer homology with applications, i.
\newblock {\em Geom. Funct. Anal.}, 9(5):985--1033, 1999.

\bibitem[Wei79]{weinstein-conjecture}
A.~Weinstein.
\newblock On the hypotheses of {R}abinowitz' periodic orbit theorems.
\newblock {\em J. Diff. Geom.}, 33:353--358, 1979.

\bibitem[Wen20]{wendl-sft}
C.~Wendl.
\newblock Lectures on symplectic field theory.
\newblock
  \url{https://www.mathematik.hu-berlin.de/~wendl/Sommer2020/SFT/lecturenotes.pdf},
  2020.

\bibitem[Wil20]{wilkins_quantum_steenrod}
N.~Wilkins.
\newblock A construction of the quantum {S}teenrod squares and their algebraic
  relations.
\newblock {\em Geom. Topol.}, 24:885--970, 2020.

\end{thebibliography}
\end{document}